\title[Network parameterizations for the  Grassmannian]
{Network parameterizations for the Grassmannian}
\author{Kelli Talaska and Lauren Williams}
\date{\today}
\thanks{The first author was partially
supported by NSF Grant DMS-1004532.  The second author was
partially supported by an NSF CAREER award and an
Alfred Sloan Fellowship.}
\address{Department of Mathematics, University of California,
Berkeley, CA 94720-3840}
\email{talaska@math.berkeley.edu}
\address{Department of Mathematics, University of California,
Berkeley, CA 94720-3840}
\email{williams@math.berkeley.edu}
\subjclass[2000]{}
\newcommand{\rotxc}[1]{\begin{sideways}#1\end{sideways}}
\newcommand{\invert}[1]{\rotxc{\rotxc{#1}}}
\def\Le{\hbox{\invert{$\Gamma$}}}
\def\tbox(#1,#2)#3{
\x=#1 \y=#2
\multiply\x by 12
\multiply\y by 12
\z=\x \t=\y
\advance\z by 12
\advance\t by 12
\psline(\x,\y)(\x,\t)(\z,\t)(\z,\y)(\x,\y)
\advance\x by 6
\advance\y by 6
\rput(\x,\y){{\bf #3}}}
\def\proof{\par{\it Proof}. \ignorespaces}
\def\endproof{{\ \vbox{\hrule\hbox{%
     \vrule height1.3ex\hskip0.8ex\vrule}\hrule }}\par}
\theoremstyle{definition}
\theoremstyle{remark}
\numberwithin{equation}{section}
\let\trueint=\int
\let\truesum=\sum
\def\int{\mathop{\textstyle\trueint}\limits}
\def\sum{\mathop{\textstyle\truesum}\limits}
\def\K{{\mathbb{K}}}
\def\F{{\mathbb{F}}}
\def\P{{\mathcal P}}
\def\SL{\mathrm{SL}}
\def\Sym{{\mathfrak S}}
\def\v{\mathbf{v}}
\def\w{\mathbf{w}}
\newtheorem{theorem}{Theorem}[section]
\newtheorem*{theorem*}{Theorem}
\newtheorem{definition}[theorem]{Definition}
\newtheorem{proposition}[theorem]{Proposition}
\newtheorem{lemma}[theorem]{Lemma}
\newtheorem{example}[theorem]{Example}
\newtheorem{corollary}[theorem]{Corollary}
\newtheorem{remark}[theorem]{Remark}
\newcommand{\inv}{^{-1}}
\newcommand{\To}{\longrightarrow}
\newcommand{\R}{\mathbb R}
\newcommand{\RR}{\mathcal R}
\newcommand{\C}{\mathbb C}
\newcommand{\B}{\mathcal{B}}
\newcommand{\Black}{\raisebox{0.12cm}{\hskip0.14cm\circle*{7}\hskip-0.1cm}}
\newcommand{\White}{\raisebox{0.12cm}{\hskip0.14cm\circle{7}\hskip-0.1cm}}
\newcommand{\smallblack}{\raisebox{0.12cm}{\hskip0.15cm\circle*{4}}\hskip-0.1cm}
\DeclareMathOperator{\In}{in}
\DeclareMathOperator{\Out}{out}
\DeclareMathOperator{\M}{\mathcal M}
\DeclareMathOperator{\I}{\mathcal I}
\newcommand{\thmrefer}[1]{\renewcommand\thetheorem
  {\protect\ref{#1}}\addtocounter{theorem}{-1}}
\begin{document}

\begin{abstract}
Deodhar introduced his decomposition of partial flag varieties
as a tool for understanding Kazhdan-Lusztig polynomials.
The Deodhar decomposition of the Grassmannian is
also useful in the context of soliton
solutions to the KP equation, as shown by Kodama and the second author.
Deodhar components $\mathcal{R}_D$ of the
Grassmannian are in bijection with certain tableaux $D$ called
\emph{Go-diagrams}, and each component
is isomorphic to $(\K^*)^a \times (\K)^b$ for some non-negative
integers $a$ and $b$.

Our main result is
an explicit parameterization of
each Deodhar component in the Grassmannian in terms of networks.
More specifically, from a Go-diagram $D$ we construct a
weighted network $N_D$ and its  \emph{weight matrix} $W_D$,
whose entries enumerate directed paths in $N_D$.  By letting the
weights in the network vary over $\K$ or $\K^*$ as appropriate,
one gets a parameterization of the Deodhar component $\mathcal{R}_D$.
One application of such a parameterization is that one may
immediately determine which Pl\"ucker coordinates are
vanishing and nonvanishing, by using the Lindstrom-Gessel-Viennot Lemma.
We also give a (minimal) characterization of
each Deodhar component in terms of Pl\"ucker coordinates.
A main tool for us is the work of Marsh and Rietsch \cite{MR}
on Deodhar components in the flag variety.
\end{abstract}

\maketitle

\setcounter{tocdepth}{1}
\tableofcontents

\section{Introduction}

There is a remarkable subset of the
real Grassmannian $Gr_{k,n}(\R)$
called its \emph{totally non-negative part} $(Gr_{k,n})_{\geq 0}$
\cite{Lusztig2, Postnikov}, which may be defined as the subset of the
real Grassmannian where all Pl\"ucker coordinates have the same sign.
Postnikov showed that $(Gr_{k,n})_{\geq 0}$ has a decomposition into
\emph{positroid cells},
which are indexed by certain tableaux called $\Le$-diagrams.
He also gave explicit parameterizations of each cell.  In particular, he showed
that from each $\Le$-diagram one can produce a planar \emph{network}, and that
one can write down a parameterization of the corresponding cell using the
\emph{weight matrix} of that network.  This parameterization
shows that the cell
is isomorphic to $\R_{>0}^d$ for some $d$.
Such a parameterization is convenient, because for example,
one may read off formulas for Pl\"ucker coordinates from non-intersecting
paths in the network, using
the Lindstrom-Gessel-Viennot Lemma.

A natural question is whether these network parameterizations
for positroid cells can be extended from
$(Gr_{k,n})_{\geq 0}$ to the entire real Grassmannian $Gr_{k,n}(\R)$.
In this paper we give an affirmative answer to this question,
by replacing the positroid cell decomposition with the Deodhar
decomposition of the Grassmannian $Gr_{k,n}(\K)$ (here $\K$
is an arbitrary field).

The components of the Deodhar decomposition are
not in general cells, but nevertheless have
a simple topology: by \cite{Deodhar, Deodhar2}, each
one is isomorphic to
$(\K^*)^a \times (\K)^b$.  The relation of the Deodhar decomposition
of $Gr_{k,n}(\R)$ to Postnikov's cell decomposition of $(Gr_{k,n})_{\geq 0}$
is as follows: the intersection of a Deodhar component
$\mathcal{R}_D \cong (\R^*)^a \times (\R)^b$
with $(Gr_{k,n})_{\geq 0}$ is precisely one positroid cell isomorphic
to $(\R_{>0})^a$
if $b=0$, and is empty otherwise.  In particular, when one
intersects the Deodhar decomposition with $(Gr_{k,n})_{\geq 0}$,
one obtains the positroid cell decomposition of $(Gr_{k,n})_{\geq 0}$.
There is a related \emph{positroid stratification} of the real
Grassmannian, and each positroid stratum is a union of Deodhar components.

As for the combinatorics,
components of the Deodhar decomposition are indexed by
\emph{distinguished subexpressions} \cite{Deodhar, Deodhar2}, or
equivalently, by certain tableaux
called \emph{Go-diagrams} \cite{KW2}, which generalize
$\Le$-diagrams.
In this paper we associate a network to each Go-diagram,
and write down a parameterization of the corresponding
Deodhar component using the weight matrix of that network.
Our construction generalizes Postnikov's, but
our networks are no longer planar in general.

Our main results can be summed up as follows.  See Theorems~\ref{th:main} and~\ref{th:characterize} and the constructions preceding them for complete details.
\begin{theorem*} Let $\K$ be an arbitrary field.
\begin{itemize}
\item Every point in $Gr_{k,n}(\K)$ can be realized as the weight matrix of a unique network associated to a Go-diagram, and  we can explicitly construct the corresponding network.  The networks corresponding to points in the same Deodhar component have the same underlying graph, but different weights.
\item Every Deodhar component may be characterized by the vanishing and nonvanishing of certain Pl\"ucker coordinates.  Using this characterization, we can also explicitly construct the network associated to a point given either by a matrix repsresentative or by a list of Pl\"ucker coordinates.
\end{itemize}
\end{theorem*}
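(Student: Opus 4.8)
The plan is to build everything on top of two inputs: the Deodhar decomposition of $Gr_{k,n}(\K)$, which writes the Grassmannian as a disjoint union of components $\mathcal{R}_D$ indexed by Go-diagrams $D$, and the explicit parameterization of Deodhar components in the full flag variety $G/B$ due to Marsh and Rietsch \cite{MR}. Given a Go-diagram $D$ --- equivalently a distinguished subexpression $\v$ inside a fixed reduced word $\w$ --- Marsh and Rietsch produce a canonical representative of each point of $\mathcal{R}_D$ as an ordered product $g_1 g_2 \cdots g_m$ of elements of $G$, where the $i$-th factor is one of $y_i(t_i)$, a Weyl-group lift $\dot s_i$, or $x_i(-t_i^{-1})$ (with $t_i$ ranging over $\K$ or $\K^*$ respectively), according to whether position $i$ is a non-distinguished letter, a ``forced'' distinguished letter, or a letter in the $J$-set of the subexpression. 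Pushing this product forward under the projection $G/B \to Gr_{k,n}=G/P_k$ and reading off an appropriate $k\times n$ submatrix yields a matrix representative $A$ whose entries are Laurent-polynomial functions of the parameters $t_i$.

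First I would define, directly from the Go-diagram $D$, a weighted directed graph $N_D$: its vertices sit on a grid dictated by the shape of $D$, its edge set and edge orientations are governed by the box labels (empty boxes and the two kinds of filled boxes), and its edge weights are monomials in the $t_i$. I would then prove that the weight matrix $W_D$ --- the $k\times n$ matrix whose entries are the path generating functions in $N_D$ from the $n$ boundary vertices to the $k$ boundary sources --- coincides with the Marsh--Rietsch matrix $A$. The natural route is induction on the number of boxes of $D$: each Chevalley generator or Weyl lift in the product $g_1\cdots g_m$ corresponds to adding one box to the diagram and performing a local modification of the network (inserting a vertex, an edge, or a weighted ``bridge''), and one checks that this local modification has exactly the effect on path generating functions that left-multiplication by the corresponding $g_i$ has on the matrix. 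Combined with the fact that the Marsh--Rietsch map is a bijection from $(\K^*)^a\times\K^b$ onto $\mathcal{R}_D$, this immediately yields the first bullet: every point of $Gr_{k,n}(\K)$ lies in a unique $\mathcal{R}_D$, hence equals $W_D$ for a unique choice of weights on $N_D$, and the underlying graph depends only on $D$.

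For the second bullet I would apply the Lindström--Gessel--Viennot lemma to $N_D$: each Pl\"ucker coordinate $\Delta_J$ of $W_D$ equals a signed sum over families of vertex-disjoint paths in $N_D$ with sink set $J$. Because of the rigid structure of $N_D$, for many $J$ no such family exists, so $\Delta_J\equiv 0$ on $\mathcal{R}_D$; for certain distinguished $J$ there is a unique minimal disjoint path family whose weight is a monomial in the $\K^*$-parameters, so $\Delta_J$ is nowhere vanishing on $\mathcal{R}_D$. I would extract from these a minimal list of vanishing/nonvanishing conditions and prove, by comparing across all Go-diagrams (using that the $\mathcal{R}_D$ partition the Grassmannian), that this list cuts out $\mathcal{R}_D$ exactly. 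Finally, given an arbitrary point presented by a matrix or by its list of Pl\"ucker coordinates, one reads off which vanishing/nonvanishing pattern holds, thereby recovering $D$, and then inverts the now-explicit map $W_D$ to solve for each weight $t_i$ as a ratio of Pl\"ucker coordinates --- an LGV ``Cramer's rule'' computation --- giving the promised reconstruction of the network.

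The main obstacle is the inductive identification of $W_D$ with the Marsh--Rietsch representative: since the networks are not planar, the local surgery attached to each generator is more delicate than in Postnikov's $\Le$-diagram case, and one must track signs and the interaction between newly inserted weighted edges and pre-existing ones with care. Finding a description of $N_D$ clean enough to make this induction manageable, while simultaneously making the LGV analysis of the second part transparent, is the crux; a secondary difficulty is establishing minimality of the Pl\"ucker characterization, which requires understanding precisely which subsets $J$ are ``seen'' by disjoint path families in $N_D$.
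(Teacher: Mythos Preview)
Your high-level strategy---reduce to Marsh--Rietsch and show the network parameterization agrees with theirs---is the paper's strategy too. But the implementations diverge, and your inductive step has a gap. The paper does \emph{not} show that the weight matrix $W_D$ equals the projected MR-matrix $M_D$; these are genuinely different matrices. What is proved (Theorem~\ref{th:row}) is that $\Psi(W_D)$ equals the \emph{reduced row-echelon form} of $M_D$, where $\Psi$ is an explicit nontrivial rational change of variables sending the network weights $a_i,c_j$ to Laurent monomials in the MR-parameters $p_i,m_j$ (Definition~\ref{def:rationalmap}). Your proposed box-by-box induction---matching each factor $g_\ell$ to a local surgery on $N_D$ whose effect on path generating functions equals multiplication by $g_\ell$---cannot succeed at the matrix level, since $W_D$ is always in echelon form while partial MR-products are not, and echelon form does not commute with one-sided multiplication. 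The paper's actual route is: interpret the entries of $M_D$ via routes in a concatenated \emph{chip network} (one chip per $g_\ell$); biject such routes with \emph{pseudopaths} in a modified $N_D$ (paths allowed to step east as well as west and south), giving a pseudopath formula for the rescaled $\widetilde{M}_D$ (Theorem~\ref{th:MR-matrix}); then expand the echelon entries as alternating sums over concatenations of pseudopaths and exhibit a $2^q$-to-one sign-reversing cancellation killing everything except honest directed paths, whose surviving weight is $\Psi$ of the network weight. That cancellation argument is the substitute for your induction.

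For the second bullet the paper again takes a different route: it does not apply Lindstr\"om--Gessel--Viennot to $N_D$. Instead it imports from \cite{KW2} explicit monomial formulas (Theorems~\ref{p:maxmin} and~\ref{p:Plucker} here) in the MR-parameters for one Pl\"ucker coordinate $\Delta_{I_b}$ per box $b$: a nonzero monomial in the $p_i$ when $b$ contains a $+$, identically zero when $b$ contains a white stone. These together with the Schubert condition on $I(\lambda)$ characterize $\mathcal{R}_D$ (Theorem~\ref{th:characterize}), and the proof that the conditions are sufficient compares two Go-diagrams at the first box where they differ. Your LGV plan is not wrong in principle, but since $N_D$ is non-planar in general, Pl\"ucker coordinates are \emph{signed} sums over disjoint path families; existence of a family does not by itself give nonvanishing, so you would need an additional uniqueness or sign-coherence argument that you have not supplied.
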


To illustrate the main results, we provide a small example here.  More complicated examples may be seen throughout the rest of the paper.

%%%%%%%%%%%%%%%%%%%%%%%%%%%%%% two G(2,4) diagrams andnetworks)%%%%%
\begin{figure}[ht]
\setlength{\unitlength}{0.7mm}
\begin{center}
\begin{picture}(30,30)
% Young Diagram
% horizontal lines
  \put(5,25){\line(1,0){20}}
  \put(5,15){\line(1,0){20}}
  \put(5,5){\line(1,0){20}}
% vertical lines
  \put(5,5){\line(0,1){20}}
  \put(15,5){\line(0,1){20}}
  \put(25,5){\line(0,1){20}}

% first row
  \put(7,18){\scalebox{1.7}{$+$}}
  \put(17,18){\scalebox{1.7}{$+$}}
% second row
  \put(7,8){\scalebox{1.7}{$+$}}
  \put(17,8){\scalebox{1.7}{$+$}}

\put(2,28){$D_1$}

  \end{picture}\quad\psset{unit=.6cm,dotstyle=o,dotsize=5pt 0,linewidth=0.8pt,arrowsize=3pt 2,arrowinset=0.25}
\begin{pspicture*}(1,0.8)(8,5)
% horizontal edges
\psline{->}(5,2)(4,2) \uput[u](4.5,2){$a_1$}
\psline{->}(4,2)(2,2) \uput[u](2.5,2){$a_2$}
\psline{->}(5,4)(4,4) \uput[u](4.5,4){$a_3$}
\psline{->}(4,4)(2,4) \uput[u](2.5,4){$a_4$}

%vertical edges
\psline{->}(2,4)(2,2)
\psline{->}(2,2)(2,1)
\psline{->}(4,4)(4,2)
\psline{->}(4,2)(4,1)

%interior vertices
\psdots[dotstyle=*,linecolor=black](2,4)
\psdots[dotstyle=*,linecolor=black](2,2)
\psdots[dotstyle=*,linecolor=black](4,4)
\psdots[dotstyle=*,linecolor=black](4,2)

%boundary vertices
\psdots[dotstyle=*,linecolor=black](5,4)
\uput[r](5.08,4.12){\large{1}}
\psdots[dotstyle=*,linecolor=black](5,2)
\uput[r](5.08,2.12){\large{2}}
\psdots[dotstyle=*,linecolor=black](4,1)
\uput[r](4.08,1.12){\large{3}}
\psdots[dotstyle=*,linecolor=black](2,1)
\uput[r](2.08,1.12){\large{4}}

\end{pspicture*}\quad\begin{picture}(30,30)
% Young Diagram
% horizontal lines
  \put(5,25){\line(1,0){20}}
  \put(5,15){\line(1,0){20}}
  \put(5,5){\line(1,0){20}}
% vertical lines
  \put(5,5){\line(0,1){20}}
  \put(15,5){\line(0,1){20}}
  \put(25,5){\line(0,1){20}}

% first row
  \put(8,20){\hskip0.15cm\circle*{5}}
  \put(17,18){\scalebox{1.7}{$+$}}
% second row
  \put(7,8){\scalebox{1.7}{$+$}}
  \put(18,10){\hskip0.15cm\circle{5}}

\put(2,28){$D_2$}

  \end{picture}\quad\begin{pspicture*}(1,0.8)(6,5)
% horizontal edges
\psline{->}(5,2)(2,2) \uput[u](2.5,2){$a_2$}
\psline{->}(5,4)(4,4) \uput[u](4.5,4){$a_3$}
\psline{->}(4,4)(2,4) \uput[u](2.5,4){$c_4$}

%vertical edges
\psline{->}(2,4)(2,2)
\psline{->}(2,2)(2,1)
\psline{->}(4,4)(4,1)

%interior vertices
\psdots[dotsize=9pt 0,dotstyle=*,linecolor=black](2,4)
\psdots[dotstyle=*,linecolor=black](2,2)
\psdots[dotstyle=*,linecolor=black](4,4)

%boundary vertices
\psdots[dotstyle=*,linecolor=black](5,4)
\uput[r](5.08,4.12){\large{1}}
\psdots[dotstyle=*,linecolor=black](5,2)
\uput[r](5.08,2.12){\large{2}}
\psdots[dotstyle=*,linecolor=black](4,1)
\uput[r](4.08,1.12){\large{3}}
\psdots[dotstyle=*,linecolor=black](2,1)
\uput[r](2.08,1.12){\large{4}}
\end{pspicture*}
\end{center}
\caption{The diagrams and networks associated to $\mathcal{R}_{D_1}$ and $\mathcal{R}_{D_2}$ in Example~\ref{ex:Gr24}.}
\label{fig:Gr24}
\end{figure}
%%%%%%%%%%%%%%%%%%%%% end two G(2,4) diagrams and networks %%%%%%%%%

\begin{example}\label{ex:Gr24}
Consider the Grassmannian $Gr_{2,4}$.  The large Schubert cell in this Grassmannian can be characterized as
$$\Omega_{\lambda} = \{A \in Gr_{2,4} \ \vert \ \Delta_{1,2}(A) \neq 0\},$$ where $\Delta_J$ denotes the Pl\"ucker coordinate corresponding to the column set $J$ in a matrix representative of a point in $Gr_{2,4}$.
This Schubert cell contains multiple positroid strata, including
$S_{\I}$, where $\I$ is the Grassmann necklace $\I = (12,23,34,14).$  This positroid stratum can also be characterized by the non-vanishing of certain Pl\"ucker coordinates:
$$S_{\I} = \{A \in Gr_{2,4} \ \vert \ \Delta_{1,2}(A) \neq 0, \
\Delta_{2,3}(A) \neq 0,\ \Delta_{3,4}(A) \neq 0,\ \Delta_{1,4}(A) \neq 0\}.$$

Figure \ref{fig:Gr24} shows two Go-diagrams $D_1$ and $D_2$ and their associated networks. Note that the network on the right is not planar.  The weight matrices associated to these diagrams are
\[
\begin{pmatrix}
1 & 0 & -a_3 & -(a_3 a_4 + a_3 a_2) \\
0 & 1 & a_1 & a_1 a_2
\end{pmatrix}
\text{ and }
%\[
\begin{pmatrix}
1 & 0 & -a_3 & -a_3 c_4 \\
0 & 1 & 0 &  a_2
\end{pmatrix}.
\]
The positroid stratum $S_{\I}$ is the disjoint union
of the two corresponding Deodhar components $\mathcal{R}_{D_1}$ and $\mathcal{R}_{D_2}$, which can be characterized in terms of vanishing and nonvanishing of minors as:
$$\mathcal{R}_{D_1} = \{A \in S_{\I} \ \vert \ \Delta_{1,3} \neq 0\} \text{ and }
\mathcal{R}_{D_2} = \{A \in S_{\I} \ \vert \ \Delta_{1,3} = 0\}.$$
Note that if one lets the $a_i$'s range over $\K^*$ and lets $c_4$ range over
$\K$, then we see that $\mathcal{R}_{D_1} \cong (\K^*)^4$ and $\mathcal{R}_{D_2} \cong (\K^*)^2 \times \K.$
\end{example}

There are several applications of our construction.
First, as a special case of our theorem, one may parameterize
all $k \times n$ matrices using
networks.  Second, by applying the Lindstrom-Gessel-Viennot Lemma
to a given network, one may write down explicit formulas
for Pl\"ucker coordinates in terms of collections of non-intersecting
paths in the network.  Third, building upon work
of \cite{KW2}, we obtain (minimal) descriptions
of Deodhar components in the Grassmannian, in terms of vanishing
and nonvanishing of Pl\"ucker coordinates. It follows that
each Deodhar component is a union of matroid strata.

Although less well known than the Schubert decomposition and
matroid stratification,
the Deodhar decomposition is very interesting in its
own right.  Deodhar's original motivation for introducing
his decomposition was the desire to
understand Kazhdan-Lusztig polynomials.  In the flag variety,
one may intersect two opposite Schubert cells, obtaining a
Richardson variety, which Deodhar showed is a union of
Deodhar components.  Each Richardson variety $\RR_{v,w}(q)$ may be defined
over a finite field $\K = \F_q$, and in this case, the number
of points determines the $R$-polynomials
$R_{v,w}(q) = \#(\RR_{v,w}(\F_q))$, introduced by
Kazhdan and Lusztig \cite{KazLus} to give a recursive formula for the
Kazhdan-Lusztig polynomials.
Since each Deodhar component is isomorphic
to $(\F^*_q)^a \times (\F_q)^b$ for some $a$ and $b$,
if one understands the decomposition of a Richardson variety
into Deodhar components, then in principle one may
compute the $R$-polynonomials
and hence Kazhdan-Lusztig polynomials.

Another reason for our interest in the Deodhar decomposition is its
relation to soliton solutions of
the KP equation.  It is well-known that
from each point $A$ in the real Grassmannian, one may construct a soliton
solution $u_A(x,y,t)$ of the KP equation.  It was shown
in recent work of Kodama
and the second author \cite{KW2} that when the time variable $t$
tends to $-\infty$, the combinatorics of the solution
$u_A(x,y,t)$ depends precisely on which Deodhar component $A$ lies in.

One final result of this paper is the verification that two notions of 
total positivity for the Grassmannian coincide.  In 
\cite{Lusztig2}, Lusztig defined the totally non-negative part of 
any partial flag variety in a Lie-theoretic way. He also 
conjectured a cell decomposition for it, which was proved by 
Rietsch \cite{RietschThesis}. Independently 
Postnikov defined the totally non-negative part of the real Grassmannian
in terms of Pl\"ucker coordinates, and gave a cell decomposition of it.  
It is not obvious that Lusztig's
definitions (for $Gr_{k,n}(\R)$) coincide with Postnikov's; 
however, this has been verified by 
Rietsch 
\cite{RietschPC}.
In this paper we give a new proof that the two notions of 
total positivity coincide.
\begin{corollary}\label{cor:coincide}
Lusztig's definition of the totally non-negative part of 
$Gr_{k,n}(\R)$ and its cell decomposition
coincides with Postnikov's definition of the 
totally non-negative part of $Gr_{k,n}(\R)$ and its cell decomposition.
\end{corollary}

The outline of this paper is as follows.  In Section \ref{sec:Gr},
we give some background on the Grassmannian and its decompositions,
including the Schubert decomposition, the positroid stratification,
and the matroid stratification. In Section \ref{sec:main}, we present
our main construction: we explain how to construct
a network from each diagram, then use that network to write
down a parameterization of a subset of the Grassmannian
that we call a network component.
In Section \ref{sec:project}
we define Deodhar's decomposition of the flag variety, and its
projection to the Grassmannian.  We also describe parameterizations
of Deodhar components in the flag variety
which are due to Marsh and Rietsch \cite{MR}.
In Sections \ref{sec:MR-matrix} and \ref{sec:proof} we prove that
after a rational transformation of variables,
our network parameterizations coincide with the projections of the
Marsh-Rietsch parameterizations.
Finally in Section \ref{sec:characterization}
we give a characterization of Deodhar components in terms of the vanishing and nonvanishing of certain
Pl\"ucker coordinates.

\textsc{Acknowledgements:}
We are grateful to Sara Billey for numerous helpful comments on
the first version of this paper, as well as to an anonymous referee
for useful comments.
L.W. is also grateful to Yuji Kodama for their joint work
on soliton solutions of the KP equation, which provided
motivation for this project.

%%%%%%%%%%%%%   Background   %%%%%%%%%%%%%%%%%%%%%%%%

\section{Background on the Grassmannian}\label{sec:Gr}

The \emph{Grassmannian} $Gr_{k,n}$ is the space of all
$k$-dimensional subspaces of an $n$-dimensional vector space $\K^n$.
In this paper we will usually let
$\K$ be an arbitrary field,
though we will often think of it as $\R$ or $\C$.
An element of
$Gr_{k,n}$ can be viewed as a full-rank $k\times n$ matrix modulo left
multiplication by nonsingular $k\times k$ matrices.  In other words, two
$k\times n$ matrices represent the same point in $Gr_{k,n}$ if and only if they
can be obtained from each other by row operations.
Let $\binom{[n]}{k}$ be the set of all $k$-element subsets of $[n]:=\{1,\dots,n\}$.
For $I\in \binom{[n]}{k}$, let $\Delta_I(A)$
be the {\it Pl\"ucker coordinate}, that is, the maximal minor of the $k\times n$ matrix $A$ located in the column set $I$.
The map $A\mapsto (\Delta_I(A))$, where $I$ ranges over $\binom{[n]}{k}$,
induces the {\it Pl\"ucker embedding\/} $Gr_{k,n}\hookrightarrow \mathbb{KP}^{\binom{n}{k}-1}$ into projective space.

We now describe several useful decompositions of the Grassmannian:
the Schubert decomposition, the positroid
stratification, and the matroid stratification.  Note that the matroid
stratification refines the positroid stratification, which refines the Schubert
decomposition.  The main subject of this paper is the
{\it Deodhar} decomposition of the Grassmannian, which refines the
positroid stratification, and is refined by the matroid stratification
(as we prove in Corollary \ref{cor:refine}).

\subsection{The Schubert decomposition of $Gr_{k,n}$}\label{sec:Schub}

Throughout this paper, we identify partitions with their Young diagrams. Recall that the partitions $\lambda$ contained in
a $k \times (n-k)$ rectangle are in bijection with $k$-element subset $I \subset [n]$.
The boundary of the Young diagram of such a partition
$\lambda$ forms a lattice path from the upper-right corner to the lower-left
corner of the rectangle.  Let us label the $n$ steps
in this path by the numbers $1,\dots,n$, and define
$I = I(\lambda)$ as the set of labels on the $k$ vertical steps in
the path. Conversely, we let $\lambda(I)$ denote the
partition corresponding to the subset $I$.

\begin{definition}
For each partition $\lambda$ contained in a $k \times (n-k)$ rectangle, we define the
\emph{Schubert cell}
$$\Omega_{\lambda} = \{A \in Gr_{k,n} \ \vert \ I(\lambda) \text{ is
the lexicographically minimal subset such that }\Delta_{I(\lambda)}(A) \neq 0 \}.$$  As $\lambda$ ranges over the partitions contained in a $k \times (n-k)$ rectangle,
this gives the \emph{Schubert decomposition}
of the Grassmannian $Gr_{k,n}$, i.e.
\[
Gr_{k,n}=\bigsqcup_{\lambda\subset (n-k)^k}\,\Omega_{\lambda}.
\]
\end{definition}

We now define the \emph{shifted linear order} $<_i$ (for $i\in [n]$) to be the total order on $[n]$
defined by $$i <_i i+1 <_i i+2 <_i \dots <_i n <_i 1 <_i \dots <_i i-1.$$
One can then define \emph{cyclically shifted Schubert cells} as follows.

\begin{definition} \label{def:Schubert}
For each partition $\lambda$ contained in a $k \times (n-k)$ rectangle, and each $i \in [n]$, we define the
\emph{cyclically shifted Schubert cell}
$$\Omega_{\lambda}^i = \{A \in Gr_{k,n} \ \vert \ I(\lambda) \text{ is
the lexicographically minimal subset with respect to }<_i \text{ such that }
\Delta_{I(\lambda)} \neq 0\}.$$
\end{definition}

\subsection{The positroid stratification of $Gr_{k,n}$}\label{sec:positroid}

The \emph{positroid stratification} of the  Grassmannian
$Gr_{k,n}$ is obtained by taking the
simultaneous refinement of the $n$ Schubert decompositions with respect to
the $n$ shifted linear orders $<_i$.  This stratification
was first considered
by Postnikov \cite{Postnikov}, who showed that
the strata are conveniently described
in terms of
\emph{Grassmann necklaces}, as well as \emph{decorated permutations}
and \emph{$\Le$-diagrams}.  Postnikov coined the terminology
\emph{positroid} because the intersection of the positroid stratification
of the real Grassmannian
with the \emph{totally non-negative part of the Grassmannian}
$(Gr_{k,n})_{\geq 0}$ gives a cell decomposition of
$(Gr_{k,n})_{\geq 0}$  (whose cells are called
\emph{positroid cells}).

\begin{definition}\cite[Definition 16.1]{Postnikov}
A \emph{Grassmann necklace} is a sequence
$\I = (I_1,\dots,I_n)$ of subsets $I_r \subset [n]$ such that,
for $i\in [n]$, if $i\in I_i$ then $I_{i+1} = (I_i \setminus \{i\}) \cup \{j\}$,
for some $j\in [n]$ ($j$ may coincide with $i$); and if $i \notin I_i$ then $I_{i+1} = I_i$.
(Here indices $i$ are taken modulo $n$.)  In particular, we have
$|I_1| = \dots = |I_n|$, which is equal to some $k \in [n]$.  We then say that
$\I$ is a Grassmann necklace of \emph{type} $(k,n)$.
\end{definition}

\begin{example}\label{ex1}
$\mathcal I = (1345, 3456, 3456, 4567, 4567, 1467, 1478, 1348)$ is
an example of a Grassmann necklace of type $(4,8)$.
\end{example}

\begin{lemma}\cite[Lemma 16.3]{Postnikov} \label{lem:necklace}
Given $A\in Gr_{k,n}$,
let $\mathcal I(A) = (I_1,\dots,I_n)$ be the sequence of subsets in
$[n]$ such that, for $i \in [n]$, $I_i$ is the lexicographically
minimal subset of $\binom{[n]}{k}$  with respect to the shifted linear order
$<_i$ such that $\Delta_{I_i}(A) \neq 0$.
Then $\I(A)$ is a Grassmann necklace of type $(k,n)$.
\end{lemma}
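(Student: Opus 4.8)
The plan is to reinterpret each $I_i$ in matroid‑theoretic terms. Let $M(A)$ denote the matroid on $[n]$ whose independent sets are the subsets $S$ for which the columns $\{A_s : s\in S\}$ of $A$ are linearly independent. Since $A$ has rank $k$, the matroid $M(A)$ has rank $k$, its bases are exactly the $k$‑subsets $S$ with $\Delta_S(A)\neq 0$, so such a subset exists and $I_i$ is well defined as the $<_i$‑lexicographically minimal basis of $M(A)$ (the equalities $|I_1|=\dots=|I_n|=k$ are automatic, since each $I_r$ is chosen inside $\binom{[n]}{k}$). Fixing $i$, I would list the elements of $[n]$ in increasing $<_i$‑order as $c_1=i,\,c_2=i+1,\,\dots,\,c_n=i-1$, and use the standard matroid greedy algorithm to produce $I_i$: scan $c_1,c_2,\dots,c_n$ in this order and insert $c_m$ into the running set exactly when $A_{c_m}\notin\operatorname{span}(A_{c_1},\dots,A_{c_{m-1}})$. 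That the greedy output $S=\{s_1<_i\dots<_i s_k\}$ is indeed $<_i$‑lex‑minimal is a one‑line exchange argument: if a basis $T=\{t_1<_i\dots<_i t_k\}$ agreed with $S$ through $s_{\ell-1}$ but had $t_\ell<_i s_\ell$, then $\{s_1,\dots,s_{\ell-1},t_\ell\}\subseteq T$ would be independent, contradicting the greedy choice of $s_\ell$ as the $<_i$‑least valid extension. Note in particular that $c_1=i$ is inserted iff $A_i\neq 0$, so $i\in I_i$ iff column $i$ of $A$ is nonzero, and $i\notin I_i$ iff $i$ is a loop of $M(A)$.

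The next step is to compare the greedy runs for $<_i$ and for $<_{i+1}$. The increasing list for $<_{i+1}$ is $c_2,c_3,\dots,c_n,c_1$, so the two orders differ only in that $i=c_1$ moves from the front to the back. For $2\le m\le n$, when the $<_i$‑run reaches $c_m$ the span of the elements inserted so far equals $\operatorname{span}(A_{c_1},\dots,A_{c_{m-1}})$, whereas when the $<_{i+1}$‑run reaches $c_m$ this span equals $\operatorname{span}(A_{c_2},\dots,A_{c_{m-1}})$, which is contained in the former. Hence any $c_m$ with $m\ge 2$ that is inserted by the $<_i$‑run is a fortiori inserted by the $<_{i+1}$‑run, which yields the inclusion $I_i\setminus\{i\}\subseteq I_{i+1}$.

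From here the two cases of the Grassmann necklace condition should fall out. If $i\notin I_i$, then $A_i=0$, so dropping $c_1$ from the front of the scan alters none of the accumulated spans at later steps; the $<_i$‑run and the $<_{i+1}$‑run therefore insert the same elements among $c_2,\dots,c_n$, and neither inserts the loop $c_1=i$, so $I_{i+1}=I_i$. If $i\in I_i$, then $I_i\setminus\{i\}$ has size $k-1$ and lies inside the size‑$k$ basis $I_{i+1}$, so $I_{i+1}=(I_i\setminus\{i\})\cup\{j\}$ for a unique $j\in[n]\setminus(I_i\setminus\{i\})$ — with $j=i$ allowed, in which case $I_{i+1}=I_i$. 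Reading all indices modulo $n$, this is exactly the defining property of a Grassmann necklace of type $(k,n)$, so $\mathcal I(A)$ is one.

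The only real content is the greedy characterization of $I_i$ in the first paragraph; everything afterward is bookkeeping with the two orders $<_i$ and $<_{i+1}$, plus the elementary remark that enlarging the set of already‑scanned columns can only enlarge their span. So I expect no genuine obstacle here — the work lies in choosing the right language (the represented matroid and its greedy basis), which makes the cyclic shift relating $<_i$ to $<_{i+1}$ transparent.
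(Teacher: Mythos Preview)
Your argument is correct. The paper does not give its own proof of this lemma: it is quoted verbatim as \cite[Lemma 16.3]{Postnikov} and used as background, so there is nothing in the paper to compare against. Your matroid-greedy approach is the standard way to establish this fact and is exactly in the spirit of Postnikov's treatment; the key observations --- that the greedy basis in a linear order is the lex-minimal basis, and that shifting $i$ from the front to the back of the scan can only enlarge the set of elements accepted among $i+1,\dots,i-1$ --- are stated and justified cleanly.
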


The \emph{positroid stratification}
of $Gr_{k,n}$ is defined as follows.

\begin{definition}\label{def:pos}
Let $\I = (I_1,\dots,I_n)$ be a Grassmann necklace of type $(k,n)$.
The \emph{positroid stratum}
$S_{\I}$ is defined to be
$$S_{\I} = \{A \in Gr_{k,n} \ \vert \ \I(A) = \I \}.$$
Equivalently,
each positroid stratum is an intersection of $n$
cyclically shifted Schubert cells, that is,
$$S_{\I} = \bigcap_{i=1}^n ~\Omega_{\lambda(I_i)}^i\ .$$
\end{definition}

Grassmann necklaces are in bijection with
tableaux called \emph{$\Le$-diagrams}.

\begin{definition}\cite[Definition 6.1]{Postnikov}\label{def:Le}
Fix $k$, $n$.  A {\it $\Le$-diagram}
$(\lambda, D)_{k,n}$ of type $(k,n)$
is a partition $\lambda$ contained in a $k \times (n-k)$ rectangle
together with a filling $D: \lambda \to \{0,+\}$ of its boxes which has the
{\it $\Le$-property}:
there is no $0$ which has a $+$ above it and a $+$ to its
left.\footnote{This forbidden pattern is in the shape of a backwards $L$,
and hence is denoted $\Le$ and pronounced ``Le."}  (Here, ``above" means above and in the same column, and
``to its left" means to the left and in the same row.)
\end{definition}
In Figure \ref{LeDiagram} we give
an example of a $\Le$-diagram.
%%%%%%%%%%%%%%%%%%%%%%%%%%%%
\begin{figure}[h]
\setlength{\unitlength}{0.5mm}
\centering
\begin{picture}(50,50)
% Young Diagram
% horizontal lines
  \put(5,45){\line(1,0){40}}
  \put(5,35){\line(1,0){40}}
  \put(5,25){\line(1,0){30}}
  \put(5,15){\line(1,0){30}}
  \put(5,5){\line(1,0){20}}
% vertical lines
  \put(5,5){\line(0,1){40}}
  \put(15,5){\line(0,1){40}}
  \put(25,5){\line(0,1){40}}
  \put(35,15){\line(0,1){30}}
  \put(45,35){\line(0,1){10}}

% first row
  \put(7,38){\scalebox{1.2}{$+$}}
  \put(18,38){0}
  \put(27,38){\scalebox{1.2}{$+$}}
  \put(37,38){\scalebox{1.2}{$+$}}
% second row
  \put(7,28){\scalebox{1.2}{$+$}}
  \put(18,28){0}
  \put(27,28){\scalebox{1.2}{$+$}}
% third row
  \put(8,18){0}
  \put(18,18){0}
  \put(27,18){\scalebox{1.2}{$+$}}
% fourth row
  \put(7,8){\scalebox{1.2}{$+$}}
  \put(18,8){0}

\end{picture}
\caption{A Le-diagram $L=(\lambda,D)_{k,n}$.
\label{LeDiagram}}
\end{figure}
%%%%%%%%%%%%%%%%%%%%%%%%%%%%%%%
\subsection{The matroid stratification of $Gr_{k,n}$}

\begin{definition}\label{def:matroid}
A \emph{matroid} of \emph{rank} $k$ on the set $[n]$ is a nonempty collection
$\M \subset \binom{[n]}{k}$ of $k$-element subsets in $[n]$, called \emph{bases}
of $\M$, that satisfies the \emph{exchange axiom}:\\
For any $I,J \in \M$ and $i \in I$ there exists $j\in J$ such that
$(I \setminus \{i\}) \cup \{j\} \in \M$.
\end{definition}

Given an element $A \in Gr_{k,n}$, there is an associated matroid
$\M_A$ whose bases are the $k$-subsets $I \subset [n]$ such that
$\Delta_I(A) \neq 0$.

\begin{definition}
Let $\M \subset \binom{[n]}{k}$ be a matroid.
The \emph{matroid stratum}
$S_{\M}$ is defined to be
$$S_{\M} = \{A \in Gr_{k,n} \ \vert \ \Delta_I(A) \neq 0 \text{ if and only if }
I\in \M \}.$$
This gives a stratification of $Gr_{k,n}$ called the
\emph{matroid stratification}, or \emph{Gelfand-Serganova stratification}.
\end{definition}

\begin{remark}
Clearly the matroid stratification refines the positroid stratification, which
in turn refines the Schubert decomposition.
\end{remark}

%%%%%%%%%%%%%%%%%%%%%%%%%%%%%%%%%%%%%%%%%%%%%%%%%%%%%%%%%%%%

\section{The main result: network parameterizations from
Go-diagrams}\label{sec:main}

In this section we define certain tableaux called
\emph{Go-diagrams}, then explain how to parameterize the
Grassmannian using networks associated to Go-diagrams.
First we will define more general tableaux called \emph{diagrams}.

\subsection{Diagrams and networks}

\begin{definition}
Let $\lambda$ be a partition contained in a $k \times (n-k)$
rectangle.
A \emph{diagram} in $\lambda$ is an arbitrary filling of the boxes of
$\lambda$ with pluses $+$,
black stones \Black,
and white stones \White.
\end{definition}

To each diagram $D$ we associate a network $N_D$ as follows.
\begin{definition}\label{def:network}
Let $\lambda$ be a partition with $\ell$ boxes contained in a
$k \times (n-k)$
rectangle, and let $D$ be a diagram in $\lambda$.
Label the boxes of $\lambda$ from $1$ to $\ell$, starting from the
rightmost box in the bottom row,
then reading right to left across the bottom row, then right to left across
the row above that, etc.
The \emph{(weighted) network} $N_D$ associated to $D$ is a directed graph obtained
as follows:
\begin{itemize}
\item Associate an \emph{internal vertex} to each $+$ and each
\Black;
\item After labeling the southeast border of the Young diagram
with the numbers $1,2,\dots,n$ (from northeast to southwest),
associate a \emph{boundary vertex} to each number;
\item From each internal vertex, draw an edge right to the nearest
$+$-vertex or boundary vertex;
\item From each internal vertex, draw an edge down to the nearest
$+$-vertex or boundary vertex;
\item Direct all edges left and down.  After doing so,
$k$ of the boundary
vertices become \emph{sources} and the remaining $n-k$ boundary vertices
become \emph{sinks}.
\item If $e$ is a horizontal edge whose left vertex is a $+$-vertex
(respectively
a \Black-vertex)
in box $b$, assign $e$ the weight $a_b$ (respectively $c_b$).  We think of $a_b$ and $c_b$ as indeterminates, but later they will be elements of $\K^*$ and $\K$ respectively.
\item If $e$ is a vertical edge, assign $e$ the weight $1$.
\end{itemize}
\end{definition}

Note that in general such a directed graph is not planar, as two
edges may cross over each other without meeting at a vertex.
See Figure~\ref{fig:network} for an example of a diagram and its associated network.

%%%%%%%%%%%%%%%%%%%

\begin{figure}[ht]
\setlength{\unitlength}{0.9mm}
\begin{center}
\begin{picture}(50,50)
% Young Diagram
% horizontal lines
  \put(5,45){\line(1,0){40}}
  \put(5,35){\line(1,0){40}}
  \put(5,25){\line(1,0){30}}
  \put(5,15){\line(1,0){30}}
  \put(5,5){\line(1,0){20}}
% vertical lines
  \put(5,5){\line(0,1){40}}
  \put(15,5){\line(0,1){40}}
  \put(25,5){\line(0,1){40}}
  \put(35,15){\line(0,1){30}}
  \put(45,35){\line(0,1){10}}

% first row
  \put(7,38){\scalebox{1.7}{$+$}}
  \put(17,38){\scalebox{1.7}{$+$}}
  \put(27,38){\scalebox{1.7}{$+$}}
  \put(37,38){\scalebox{1.7}{$+$}}
% second row
  \put(7,28){\scalebox{1.7}{$+$}}
  \put(18,30){\hskip0.15cm\circle*{5}}
  \put(27,28){\scalebox{1.7}{$+$}}
% third row
  \put(8,20){\hskip0.15cm\circle*{5}}
  \put(17,18){\scalebox{1.7}{$+$}}
  \put(28,20){\hskip0.15cm\circle{5}}
% fourth row
  \put(7,8){\scalebox{1.7}{$+$}}
  \put(18,10){\hskip0.15cm\circle{5}}

  \end{picture}
\quad
\psset{unit=.5cm,dotstyle=o,dotsize=5pt 0,linewidth=0.8pt,arrowsize=3pt 2,arrowinset=0.25}
\begin{pspicture*}(0,0.45)(10,9)
% horizontal edges
\psline{->}(5,2)(2,2) \uput[u](2.5,2){$a_2$}
\psline{->}(7,4)(4,4) \uput[u](4.5,4){$a_4$}
\psline{->}(4,4)(2,4) \uput[u](2.5,4){$c_5$}
\psline{->}(7,6)(6,6) \uput[u](6.5,6){$a_6$}
\psline{->}(6,6)(4,6) \uput[u](4.5,6){$c_7$}
\pscurve{->}(6,6)(4,7)(2,6) \uput[u](2.5,6.2){$a_8$}
\psline{->}(9,8)(8,8) \uput[u](8.5,8){$a_9$}
\psline{->}(8,8)(6,8) \uput[u](6.5,8){$a_{10}$}
\psline{->}(6,8)(4,8) \uput[u](4.5,8){$a_{11}$}
\psline{->}(4,8)(2,8) \uput[u](2.5,8){$a_{12}$}

%vertical edges
\psline{->}(2,8)(2,6)
\pscurve{->}(2,6)(1.4,4)(2,2)
\psline{->}(2,4)(2,2)
\psline{->}(2,2)(2,1)
\pscurve{->}(4,8)(3.4,6)(4,4)
\psline{->}(4,6)(4,4)
\psline{->}(4,4)(4,1)
\psline{->}(6,8)(6,6)
\psline{->}(6,6)(6,3.02)
\psline{->}(8,8)(8,7)

%interior vertices
\psdots[dotstyle=*,linecolor=black](2,8)
\psdots[dotstyle=*,linecolor=black](2,6)
\psdots[dotsize=9pt 0,dotstyle=*,linecolor=black](2,4)
\psdots[dotstyle=*,linecolor=black](2,2)
\psdots[dotstyle=*,linecolor=black](4,4)
\psdots[dotsize=9pt 0,dotstyle=*,linecolor=black](4,6)
\psdots[dotstyle=*,linecolor=black](4,8)
\psdots[dotstyle=*,linecolor=black](6,8)
\psdots[dotstyle=*,linecolor=black](8,8)
\psdots[dotstyle=*,linecolor=black](6,6)

%boundary vertices
\psdots[dotstyle=*,linecolor=black](9,8)
\uput[r](9.08,8.12){\large{1}}
\psdots[dotstyle=*,linecolor=black](8,7)
\uput[r](8.08,7.12){\large{2}}
\psdots[dotstyle=*,linecolor=black](7,6)
\uput[r](7.08,6.12){\large{3}}
\psdots[dotstyle=*,linecolor=black](7,4)
\uput[r](7.08,4.12){\large{4}}
\psdots[dotstyle=*,linecolor=black](6,3)
\uput[r](6.08,3.12){\large{5}}
\psdots[dotstyle=*,linecolor=black](5,2)
\uput[r](5.08,2.12){\large{6}}
\psdots[dotstyle=*,linecolor=black](4,1)
\uput[r](4.08,1.12){\large{7}}
\psdots[dotstyle=*,linecolor=black](2,1)
\uput[r](2.08,1.12){\large{8}}

\end{pspicture*}
\end{center}
\caption{An example of a diagram and its associated network.}
\label{fig:network}
\end{figure}
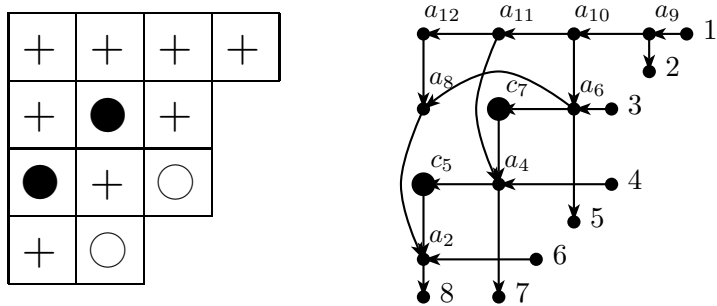

%%%%%%%%%%%%%%%%%%%%%%%%%%%%%% end network %%%%%%%%%%%%%%%%%%%%%%%%

We now explain how to associate a \emph{weight matrix} to such
a network.

\begin{definition}\label{def:weightmatrix}
Let $N_D$ be a network as in Definition \ref{def:network}.
Let $I = \{i_1 < i_2 <\dots <i_k\} \subset [n]$ denote the sources.
If $P$ is a directed path in the network, let $w(P)$ denote
the product of all weights along $P$.  If $P$ is the empty path
which starts and ends at the same boundary vertex, we let $w(P)=1$.
If $s$ is a source and $t$ is any boundary vertex, define
$$W_{st} = \pm \sum_{P} w(P),$$ where the sum is over all paths
$P$ from $s$ to $t$.
The sign is chosen (uniquely) so that
$$\Delta_{I\setminus\{s\}\cup\{t\}}(W_D)=\sum_{P} w(P), \text{ where }$$
$$W_D = (W_{st})$$  is the
$k \times n$ \emph{weight matrix}.
We make the convention that the rows of $W_D$ are indexed by the sources
$i_1, \dots, i_k$ from top to bottom, and its columns are indexed
by $1,2,\dots,n$ from left to right.
An equivalent way to define the sign of $W_{st}$ is to
let $q = |\{s+1, s+2, \dots, t-1\} \cap I|$, i.e.
the number of sources which are strictly between $s$ and $t$.
Then the sign of $W_{st}$ is $(-1)^q$.
\end{definition}

\begin{example}
The weight matrix associated to the network in Figure
\ref{fig:network} is

%%%%%%%%%%%%%%%%%%%%%%%%%% matrix for example network %%%%%%%%%%%%%%%%%%%%%%%%%%%
\[\left(
    \begin{array}{cccccccc}
      1 & a_9 & 0 & 0 & a_9a_{10} & 0 & -a_9a_{10}(a_{11}+c_7) & -a_9a_{10}(a_{11}a_{12}+a_{11}c_5+a_8+c_7c_5) \\
      0 & 0 & 1 & 0 & -a_6 & 0 & a_6c_7 & a_6a_8+a_6c_7c_5 \\
      0 & 0 & 0 & 1 & 0 & 0 & -a_4 & -a_4c_5 \\
      0 & 0 & 0 & 0 & 0 & 1 & 0 & a_2 \\
    \end{array}
  \right)
\]

%%%%%%%%%%%%%%%%%%%%%%%%%% end matrix for example network %%%%%%%%%%%%%%%%%%%%%%%
\end{example}

\subsection{Distinguished expressions}
We now review the notion of
distinguished subexpressions, as in
\cite{Deodhar} and \cite{MR}.
This definition will be essential for defining Go-diagrams.
We  assume the reader is familiar
with the (strong) Bruhat order $<$ on  $W=\Sym_n$, and the
basics of reduced expressions, as in  \cite{BB}.

Let $\w:= s_{i_1}\dots s_{i_m}$ be a reduced expression for $w\in W$.
A  {\it subexpression} $\v$ of $\w$
is a word obtained from the reduced expression $\w$ by replacing some of
the factors with $1$. For example, consider a reduced expression in $\Sym_4$, say $s_3
s_2 s_1 s_3 s_2 s_3$.  Then $s_3 s_2\, 1\, s_3 s_2\, 1$ is a
subexpression of $s_3 s_2 s_1 s_3 s_2 s_3$.
Given a subexpression $\v$,
we set $v_{(k)}$ to be the product of the leftmost $k$
factors of $\v$, if $k \geq 1$, and $v_{(0)}=1$.

\begin{definition}\label{d:Js}\cite{MR, Deodhar}
Given a subexpression $\v$ of a reduced expression $\w=
s_{i_1} s_{i_2} \dots s_{i_m}$, we define
\begin{align*}
J^{\circ}_\v &:=\{k\in\{1,\dotsc,m\}\ |\  v_{(k-1)}<v_{(k)}\},\\
J^{+}_\v\, &:=\{k\in\{1,\dotsc,m\}\ |\  v_{(k-1)}=v_{(k)}\},\\
J^{\bullet}_\v &:=\{k\in\{1,\dotsc,m\}\ |\  v_{(k-1)}>v_{(k)}\}.
\end{align*}
The expression  $\v$
is called {\it
non-decreasing} if $v_{(j-1)}\le v_{(j)}$ for all $j=1,\dotsc, m$,
e.g.\ if $J^{\bullet}_\v=\emptyset$.
% and it is called {\it reduced} if $v_{(j-1)}<
%v_{(j)}$ for all $j=1,\dotsc, n$.
\end{definition}

\begin{definition}[Distinguished subexpressions]\cite[Definition 2.3]{Deodhar}
A subexpression $\v$ of $\w$ is called {\it distinguished}
if we have
\begin{equation}\label{e:dist}
v_{(j)}\le v_{(j-1)}\ s_{i_j}\qquad \text{for all
$~j\in\{1,\dotsc,m\}$}.
\end{equation}
In other words, if right multiplication by $s_{i_j}$ decreases the
length of $v_{(j-1)}$, then in a distinguished subexpression we
must have
$v_{(j)}=v_{(j-1)}s_{i_j}$.

We write $\v\prec\w$ if $\v$ is a distinguished subexpression of
$\w$.
\end{definition}

\begin{definition}[Positive distinguished subexpressions]
We call a
subexpression $\v$ of $\w$ a {\it positive distinguished subexpression}
(or a PDS for short) if
 \begin{equation}\label{e:PositiveSubexpression}
v_{(j-1)}< v_{(j-1)}s_{i_j} \qquad \text{for all
$~j\in\{1,\dotsc,m\}$}.
 \end{equation}
In other words, it is distinguished and non-decreasing.
\end{definition}

\begin{lemma}\label{l:positive}\cite{MR}
Given $v\le w$
%in $W$
and a reduced expression $\w$ for $w$,
there is a unique PDS $\v_+$ for $v$ in $\w$.
\end{lemma}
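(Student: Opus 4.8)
The plan is to induct on $m=\ell(w)$, peeling off the last letter $s_{i_m}$ of the reduced word $\w=s_{i_1}\cdots s_{i_m}$. Write $w':=ws_{i_m}$, so that $\w':=s_{i_1}\cdots s_{i_{m-1}}$ is a reduced expression for $w'$ with $\ell(w')=m-1$ and $w'<w$. The base case $m=0$ forces $v=w=1$, and the empty subexpression is visibly the unique PDS. For the inductive step I would split into two cases according to whether $\ell(vs_{i_m})$ is smaller or larger than $\ell(v)$.

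Case A, $vs_{i_m}<v$: here the relevant Bruhat-order fact (a standard consequence of the lifting property, see \cite{BB}) is that $v\le w$ together with $ws_{i_m}<w$ and $vs_{i_m}<v$ forces $vs_{i_m}\le w'$. By the inductive hypothesis there is a unique PDS $\u_+$ for $vs_{i_m}$ in $\w'$; I would produce a PDS for $v$ in $\w$ by keeping the choices of $\u_+$ in positions $1,\dots,m-1$ and taking position $m$ to be $s_{i_m}$, checking that then $v_{(m-1)}=vs_{i_m}$, $v_{(m)}=v$, and that the defining inequality at step $m$ reads $vs_{i_m}<v$, which holds. For uniqueness, in any PDS $\v_+$ for $v$ in $\w$ the choice $v_{(m-1)}=v$ at step $m$ is ruled out (it would require $v<vs_{i_m}$), so necessarily $v_{(m-1)}=vs_{i_m}$, and the restriction of $\v_+$ to the first $m-1$ positions is a PDS for $vs_{i_m}$ in $\w'$, unique by induction.

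Case B, $vs_{i_m}>v$: then $v\neq w$, so $v<w$, and the lifting property gives directly $v\le w'$ (as $s_{i_m}$ is a right descent of $w$ but not of $v$). By induction there is a unique PDS $\u_+$ for $v$ in $\w'$; extending it to $\w$ by taking position $m$ to be $1$ yields a subexpression for $v$ whose step-$m$ inequality reads $v<vs_{i_m}$, hence a PDS. For uniqueness, in any PDS $\v_+$ for $v$ in $\w$ the choice $v_{(m-1)}=vs_{i_m}$ at step $m$ is impossible, since it would give $v_{(m-1)}>v=v_{(m)}$, contradicting that a PDS is non-decreasing; so $v_{(m-1)}=v$ and we conclude again by restricting to $\w'$ and applying the inductive hypothesis.

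The routine verifications — that restricting or extending a subexpression preserves the PDS property, and the bookkeeping of which positions lie in $J^\circ_\v$, $J^+_\v$, $J^\bullet_\v$ — are immediate from Definition \ref{d:Js} and the definition of a PDS. The only genuine input is the pair of Bruhat-order facts invoked in Cases A and B: that $v\le w$ and $ws_{i_m}<w$ imply $vs_{i_m}\le ws_{i_m}$ when $vs_{i_m}<v$, and $v\le ws_{i_m}$ when $vs_{i_m}>v$. I expect correctly stating and, if desired, briefly reproving these (from the lifting property and the subword characterization of Bruhat order) to be the only subtle point. An essentially equivalent alternative route is to note that the PDS conditions actually \emph{force} the choice at each position when $\w$ is processed from right to left — if $v_{(k)}s_{i_k}<v_{(k)}$ one must take $k\in J^\bullet_\v$, otherwise one must take $k\in J^+_\v$ — which makes uniqueness transparent and reduces existence to verifying that this forced process terminates with $v_{(0)}=1$; that termination is precisely what the Bruhat-order facts above guarantee.
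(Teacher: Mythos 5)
Your argument is correct. The paper itself gives no proof of this lemma — it is quoted from \cite{MR} — and your induction on the last letter of $\w$, split according to whether $s_{i_m}$ is a descent of $v$, is essentially the standard argument (it is equivalent to the right-to-left greedy construction in \cite{MR}, which you note as the alternative route); the two lifting-property facts you isolate are exactly the needed input and are correctly stated.
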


%%%%%%%%%%%%%%%%  Combinatorics of Deodhar components  %%%%%%%%%%%

\subsection{Go-diagrams}\label{Deodhar-combinatorics}
In this section we explain how to index
distinguished subexpressions by certain
tableaux called \emph{Go-diagrams}, which were introduced in \cite{KW2}.
Go-diagrams are certain fillings
of Young diagrams by
pluses $+$, \emph{black stones} \Black,
and \emph{white stones} \White.\footnote{In KW2, we used a slightly different convention
and used blank boxes in place of $+$'s.}

Fix $k$ and $n$.  Let
$W_k = \langle s_1,s_2,\dots,\hat{s}_{n-k},\dots,s_{n-1} \rangle$
be a parabolic subgroup of $W = \Sym_n$.
Let $W^k$ denote the set of minimal-length
coset representatives of $W/W_k$.
Recall that a \emph{descent} of a permutation $\pi$
is a position $j$ such that $\pi(j)>\pi(j+1)$.
Then $W^k$ is the subset of permutations of $\Sym_n$
which have at most one descent; and that descent must be in position $n-k$.

It is well-known that elements $w$ of $W^k$ can be identified
with partitions $\lambda_w$ contained in a $k \times (n-k)$ rectangle:
if $w=(w_1,\dots,w_n)\in W^k$ then
$\{w_{n-k+1}, w_{n-k+2}, \dots,w_n\}$ is a subset of size $k$, which
gives rise to a partition, as described at the beginning of
Section \ref{sec:Schub}.  We refer to this partition as $\lambda_w$.

Moreover, it follows from \cite{Ste} and \cite{Pro}
that the reduced expressions of $w\in W^k$ correspond to
certain \emph{reading orders} of the boxes of the partition $\lambda_w$.
Specifically, let
$Q^k$ be the poset whose elements are the boxes of a
$k \times (n-k)$ rectangle;
if $b_1$ and $b_2$
are two adjacent boxes such that $b_2$ is immediately to the left or
immediately above $b_1$, we have a cover relation $b_1 \lessdot b_2$
in $Q^k$. The partial order on $Q^k$ is the transitive closure of
$\lessdot$.
Now label the boxes of the rectangle with simple generators
$s_i$ as in Figure \ref{fig:readingorder}.  If $b$ is a box of the rectangle,
then let $s_b$ denote its label by a simple generator.
Let $w_0^k \in W^k$ denote the longest element in
$W^k$.  Then the set of
reduced expressions of $w_0^k$ can be
obtained by choosing a
linear extension of $Q^k$ and writing down the corresponding
word in the $s_i$'s.  We call such a linear extension a
\emph{reading order}; two linear extensions are shown in Figure
\ref{fig:readingorder}.
Additionally, given a partition $\lambda_w$
contained in the $k \times (n-k)$ rectangle (chosen so that
the upper-left corner of its Young diagram is aligned
with the upper-left corner of the rectangle), and a linear
extension of the sub-poset of $Q^k$ comprised of the boxes of $\lambda$, the corresponding
word in $s_i$'s is a reduced expression of
the permutation $w \in W^k$.  Moreover,
all reduced expressions of $w$ can be obtained by varying
the linear extension.
%Finally, this correspondence
%is a bijection between partitions $\lambda_w$ contained
%in the $k \times (n-k)$ rectangle and elements $w\in W^k$.

\begin{figure}
\setlength{\unitlength}{0.7mm}
\begin{center}
  \begin{picture}(60,30)

% Young Diagram
% horizontal lines
  \put(5,32){\line(1,0){45}}
  \put(5,23){\line(1,0){45}}
  \put(5,14){\line(1,0){45}}
  \put(5,5){\line(1,0){45}}
% vertical lines
  \put(5,5){\line(0,1){27}}
  \put(14,5){\line(0,1){27}}
  \put(23,5){\line(0,1){27}}
  \put(32,5){\line(0,1){27}}
  \put(41,5){\line(0,1){27}}
  \put(50,5){\line(0,1){27}}

% first row
  \put(7,27){$s_5$}
  \put(16,27){$s_4$}
  \put(25,27){$s_3$}
  \put(34,27){$s_2$}
  \put(43,27){$s_1$}
% second row
  \put(7,18){$s_6$}
  \put(16,18){$s_5$}
  \put(25,18){$s_4$}
  \put(34,18){$s_3$}
  \put(43,18){$s_2$}
% third row
  \put(7,9){$s_7$}
  \put(16,9){$s_6$}
  \put(25,9){$s_5$}
  \put(34,9){$s_4$}
  \put(43,9){$s_3$}

  \end{picture}
  \quad
  \begin{picture}(60,40)

% Young Diagram
% horizontal lines
  \put(5,32){\line(1,0){45}}
  \put(5,23){\line(1,0){45}}
  \put(5,14){\line(1,0){45}}
  \put(5,5){\line(1,0){45}}
% vertical lines
  \put(5,5){\line(0,1){27}}
  \put(14,5){\line(0,1){27}}
  \put(23,5){\line(0,1){27}}
  \put(32,5){\line(0,1){27}}
  \put(41,5){\line(0,1){27}}
  \put(50,5){\line(0,1){27}}

% first row
  \put(7,26){$15$}
  \put(16,26){$14$}
  \put(25,26){$13$}
  \put(34,26){$12$}
  \put(43,26){$11$}
% second row
  \put(7,17){$10$}
  \put(16,17){$~9$}
  \put(25,17){$~8$}
  \put(34,17){$~7$}
  \put(43,17){$~6$}
% third row
  \put(7,8){$~5$}
  \put(16,8){$~4$}
  \put(25,8){$~3$}
  \put(34,8){$~2$}
  \put(43,8){$~1$}

 \end{picture}
  \quad
  \begin{picture}(60,40)

% Young Diagram
% horizontal lines
  \put(5,32){\line(1,0){45}}
  \put(5,23){\line(1,0){45}}
  \put(5,14){\line(1,0){45}}
  \put(5,5){\line(1,0){45}}
% vertical lines
  \put(5,5){\line(0,1){27}}
  \put(14,5){\line(0,1){27}}
  \put(23,5){\line(0,1){27}}
  \put(32,5){\line(0,1){27}}
  \put(41,5){\line(0,1){27}}
  \put(50,5){\line(0,1){27}}

  % first row
  \put(7,26){$15$}
  \put(16,26){$12$}
  \put(25,26){$~9$}
  \put(34,26){$~6$}
  \put(43,26){$~3$}
% second row
  \put(7,17){$14$}
  \put(16,17){$11$}
  \put(25,17){$~8$}
  \put(34,17){$~5$}
  \put(43,17){$~2$}
% third row
  \put(7,8){$13$}
  \put(16,8){$10$}
  \put(25,8){$~7$}
  \put(34,8){$~4$}
  \put(43,8){$~1$}

  \end{picture}
\end{center}
\caption{The labeling of a the boxes of a partition by simple generators $s_i$,
and two reading orders.}
\label{fig:readingorder}
\end{figure}

\begin{definition}\cite[Section 4]{KW2}
Fix $k$ and $n$.  Let $w\in W^k$, let $\w$ be a reduced expression for
$w$, and let $\v$ be a distinguished subexpression of $\w$.
Then $w$ and $\w$ determine a partition $\lambda_w$ contained in
a $k\times (n-k)$ rectangle together with a reading order of
its boxes.  The \emph{Go-diagram} associated to $\v$ and $\w$
is a filling of $\lambda_w$ with pluses and black and white stones,
such that:
for each $k \in J_{\v}^{\circ}$ we place a white stone
 in the corresponding box;
for each
$k \in J_{\v}^{\bullet}$ we place a black stone
 in the corresponding box of $\lambda_w$;
and for each $k \in J_{\v}^{+}$ we place a plus in the corresponding box
of $\lambda_w$.
\end{definition}

\begin{remark}\label{rem:independence}
By \cite[Section 4]{KW2},
%the Go-diagram associated to
%$\v$ and $\w$ does not depend on $\w$, only on $w$.  Moreover,
whether or not a filling of a partition $\lambda_w$ is a Go-diagram
does not depend on the choice of reading order of the boxes of $\lambda_w$.
\end{remark}

\begin{definition}
We define the \emph{standard reading order} of the boxes of a partition to be the reading order
which starts at the rightmost box in the bottom row, then reads right to left
across the bottom row, then right to left across the row above that,
then right to left across the row above that, etc.  This reading order
is illustrated at the right of the figure below.
\end{definition}

By default,  we will use the standard reading order in this paper.

\begin{example}\label{ex4-1}
Let $k=3$ and $n=7$, and let $\lambda=(4,3,1)$.
The standard reading order is shown at the right
of the figure below.
\setlength{\unitlength}{0.7mm}
\begin{center}
    \begin{picture}(50,35)
% Young Diagram
% horizontal lines
  \put(5,32){\line(1,0){36}}
  \put(5,23){\line(1,0){36}}
  \put(5,14){\line(1,0){27}}
  \put(5,5){\line(1,0){9}}
% vertical lines
  \put(5,5){\line(0,1){27}}
  \put(14,5){\line(0,1){27}}
  \put(23,14){\line(0,1){18}}
  \put(32,14){\line(0,1){18}}
  \put(41,23){\line(0,1){9}}
% first row
  \put(7,26){$s_4$}
  \put(16,26){$s_3$}
  \put(25,26){$s_2$}
  \put(34,26){$s_1$}
% second row
  \put(7,17){$s_5$}
  \put(16,17){$s_4$}
  \put(25,17){$s_3$}
% third row
  \put(7,8){$s_6$}
  \end{picture}
  \qquad
	\begin{picture}(50,35)
% Young Diagram
% horizontal lines
  \put(5,32){\line(1,0){36}}
  \put(5,23){\line(1,0){36}}
  \put(5,14){\line(1,0){27}}
  \put(5,5){\line(1,0){9}}
% vertical lines
  \put(5,5){\line(0,1){27}}
  \put(14,5){\line(0,1){27}}
  \put(23,14){\line(0,1){18}}
  \put(32,14){\line(0,1){18}}
  \put(41,23){\line(0,1){9}}
% first row
  \put(8,26){$8$}
  \put(17,26){$7$}
  \put(26,26){$6$}
  \put(35,26){$5$}
% second row
  \put(8,17){$4$}
  \put(17,17){$3$}
  \put(26,17){$2$}
% third row
  \put(8,8){$1$}
  \end{picture}
\end{center}

Then the following diagrams are Go-diagrams of shape $\lambda$.
\setlength{\unitlength}{0.7mm}
\begin{center}
    \begin{picture}(50,40)

% Young Diagram
% horizontal lines
  \put(5,35){\line(1,0){40}}
  \put(5,25){\line(1,0){40}}
  \put(5,15){\line(1,0){30}}
  \put(5,5){\line(1,0){10}}
% vertical lines
  \put(5,5){\line(0,1){30}}
  \put(15,5){\line(0,1){30}}
  \put(25,15){\line(0,1){20}}
  \put(35,15){\line(0,1){20}}
  \put(45,25){\line(0,1){10}}

% first row
  \put(8,30){\hskip0.15cm\circle{5}}
  \put(18,30){\hskip0.15cm\circle{5}}
  \put(28,30){\hskip0.15cm\circle{5}}
  \put(38,30){\hskip0.15cm\circle{5}}
% second row
  \put(8,20){\hskip0.15cm\circle{5}}
  \put(18,20){\hskip0.15cm\circle{5}}
  \put(28,20){\hskip0.15cm\circle{5}}
% third row
  \put(8,10){\hskip0.15cm\circle{5}}

  \end{picture}
\qquad
     \begin{picture}(45,30)

% Young Diagram
% horizontal lines
  \put(5,35){\line(1,0){40}}
  \put(5,25){\line(1,0){40}}
  \put(5,15){\line(1,0){30}}
  \put(5,5){\line(1,0){10}}
% vertical lines
  \put(5,5){\line(0,1){30}}
  \put(15,5){\line(0,1){30}}
  \put(25,15){\line(0,1){20}}
  \put(35,15){\line(0,1){20}}
  \put(45,25){\line(0,1){10}}

% first row
  \put(7,28){\scalebox{1.7}{$+$}}
  \put(18,30){\hskip0.15cm\circle{5}}
  \put(28,30){\hskip0.15cm\circle{5}}
  \put(37,28){\scalebox{1.7}{$+$}}
% second row
  \put(7,18){\scalebox{1.7}{$+$}}
  \put(18,20){\hskip0.15cm\circle{5}}
  \put(27,18){\scalebox{1.7}{$+$}}
% third row
  \put(8,10){\hskip0.15cm\circle{5}}

  \end{picture}
\quad
    \begin{picture}(45,30)

% Young Diagram
% horizontal lines
  \put(5,35){\line(1,0){40}}
  \put(5,25){\line(1,0){40}}
  \put(5,15){\line(1,0){30}}
  \put(5,5){\line(1,0){10}}
% vertical lines
  \put(5,5){\line(0,1){30}}
  \put(15,5){\line(0,1){30}}
  \put(25,15){\line(0,1){20}}
  \put(35,15){\line(0,1){20}}
  \put(45,25){\line(0,1){10}}

% first row
  \put(8,30){\hskip0.15cm\circle*{5}}
  \put(17,28){\scalebox{1.7}{$+$}}
  \put(27,28){\scalebox{1.7}{$+$}}
  \put(38,30){\hskip0.15cm\circle{5}}
% second row
  \put(7,18){\scalebox{1.7}{$+$}}
  \put(18,20){\hskip0.15cm\circle{5}}
  \put(28,20){\hskip0.15cm\circle{5}}
% third row
  \put(7,8){\scalebox{1.7}{$+$}}

  \end{picture}
\end{center}
They correspond to the expressions
$s_6 s_3 s_4 s_5 s_1 s_2 s_3 s_4$, $s_6 1 s_4 1 1 s_2 s_3 1$, and
$1 s_3 s_4 1 s_1 1 1 s_4$.
The first and second are
positive distinguished subexpressions (PDS's), and the third one is a distinguished subexpression (but not a PDS).
\end{example}

Note that the following diagram of shape $\lambda$ is not a Go-diagram. It corresponds to the word $1 1 s_4 1 s_1 s_2 1 1$, which is not distinguished.
\setlength{\unitlength}{0.7mm}
\begin{center}
    \begin{picture}(50,40)

% Young Diagram
% horizontal lines
  \put(5,35){\line(1,0){40}}
  \put(5,25){\line(1,0){40}}
  \put(5,15){\line(1,0){30}}
  \put(5,5){\line(1,0){10}}
% vertical lines
  \put(5,5){\line(0,1){30}}
  \put(15,5){\line(0,1){30}}
  \put(25,15){\line(0,1){20}}
  \put(35,15){\line(0,1){20}}
  \put(45,25){\line(0,1){10}}

% first row
  \put(7,28){\scalebox{1.7}{$+$}}
  \put(17,28){\scalebox{1.7}{$+$}}
  \put(28,30){\hskip0.15cm\circle{5}}
  \put(38,30){\hskip0.15cm\circle{5}}
% second row
  \put(7,18){\scalebox{1.7}{$+$}}
  \put(18,20){\hskip0.15cm\circle{5}}
  \put(27,18){\scalebox{1.7}{$+$}}
% third row
  \put(7,8){\scalebox{1.7}{$+$}}

  \end{picture}
\end{center}

\begin{remark}
The Go-diagrams associated to PDS's are in bijection with
$\Le$-diagrams, see \cite[Section 4]{KW2}.
Note that the Go-diagram associated to a PDS contains
only pluses and white stones.  This is precisely a
$\Le$-diagram.
\end{remark}

If we choose a reading order of $\lambda_w$, then we will also associate
to a Go-diagram of shape $\lambda_w$ a \emph{labeled Go-diagram},
as defined below.  Equivalently,
a labeled Go-diagram is associated to a pair $(\v,\w)$.

\begin{definition}\label{def:pi}\cite[Definition 4.15]{KW2}
Given a reading order of $\lambda_w$ and a Go-diagram of shape $\lambda_w$,
we obtain a \emph{labeled Go-diagram} by replacing each \raisebox{0.12cm}{\hskip0.14cm\circle{4}\hskip-0.15cm} with a $1$,
replacing each box $b$ containing a \raisebox{0.12cm}{\hskip0.14cm\circle*{4}\hskip-0.15cm} with a $-1$
\emph{and} an $m_i$, and replacing each box $b$ containing a $+$ by a $p_i$,
where the subscript $i$ corresponds to the label of $b$ inherited from the
reading order.
\end{definition}

The labeled Go-diagrams corresponding to
the examples above using the standard reading order are:

\setlength{\unitlength}{0.8mm}
\begin{center}
    \begin{picture}(50,40)

% Young Diagram
% horizontal lines
  \put(5,35){\line(1,0){40}}
  \put(5,25){\line(1,0){40}}
  \put(5,15){\line(1,0){30}}
  \put(5,5){\line(1,0){10}}
% vertical lines
  \put(5,5){\line(0,1){30}}
  \put(15,5){\line(0,1){30}}
  \put(25,15){\line(0,1){20}}
  \put(35,15){\line(0,1){20}}
  \put(45,25){\line(0,1){10}}

% first row
  \put(8,29){1}
  \put(18,29){1}
  \put(28,29){1}
  \put(38,29){1}
% second row
  \put(8,19){1}
  \put(18,19){1}
  \put(28,19){1}
% third row
  \put(8,9){1}

  \end{picture}
\qquad
     \begin{picture}(45,30)

% Young Diagram
% horizontal lines
  \put(5,35){\line(1,0){40}}
  \put(5,25){\line(1,0){40}}
  \put(5,15){\line(1,0){30}}
  \put(5,5){\line(1,0){10}}
% vertical lines
  \put(5,5){\line(0,1){30}}
  \put(15,5){\line(0,1){30}}
  \put(25,15){\line(0,1){20}}
  \put(35,15){\line(0,1){20}}
  \put(45,25){\line(0,1){10}}

% first row
  \put(8,30){$p_8$}
  \put(18,29){1}
  \put(28,29){1}
  \put(38,30){$p_5$}
% second row
  \put(8,20){$p_4$}
  \put(18,19){1}
  \put(28,20){$p_2$}
% third row
  \put(8,9){1}

  \end{picture}
\quad
    \begin{picture}(45,30)

% Young Diagram
% horizontal lines
  \put(5,35){\line(1,0){40}}
  \put(5,25){\line(1,0){40}}
  \put(5,15){\line(1,0){30}}
  \put(5,5){\line(1,0){10}}
% vertical lines
  \put(5,5){\line(0,1){30}}
  \put(15,5){\line(0,1){30}}
  \put(25,15){\line(0,1){20}}
  \put(35,15){\line(0,1){20}}
  \put(45,25){\line(0,1){10}}
% diagonal lines
  \put(5,25){\line(1,1){10}}

% first row
  \put(5,31){$-1$}
  \put(8.5,27){$m_8$}
  \put(18,30){$p_7$}
  \put(28,30){$p_6$}
  \put(38,29){1}
% second row
  \put(8,20){$p_4$}
  \put(18,19){1}
  \put(28,19){1}
% third row
  \put(8,10){$p_1$}

  \end{picture}
\end{center}

\subsection{The main result}

To state the main result, we now consider Go-diagrams (not arbitrary
diagrams), the corresponding networks (\emph{Go-networks}),
and the corresponding weight matrices.

\begin{definition}
Let $D$ be a Go-diagram contained in a $k \times (n-k)$ rectangle.
We define a subset $\mathcal{R}_D$ of the  Grassmannian
$Gr_{k,n}$ by letting each variable $a_i$ of the weight matrix
(Definition \ref{def:weightmatrix})
range over all nonzero elements $\K^*$, and letting each variable
$c_i$ of the weight matrix range over all elements $\K$.
We call $\mathcal{R}_D$ the \emph{network component associated to $D$}.
\end{definition}

\begin{theorem}\label{th:main}
Let $D$ be a Go-diagram contained in a
$k \times (n-k)$ rectangle.
Suppose that $D$ has $t$ pluses and $u$ black stones.
Then $\mathcal{R}_D$ is isomorphic to
$(\K^*)^t \times \K^u$.
Furthermore, $Gr_{k,n}$ is the disjoint union of the network components
$\mathcal{R}_D$, as $D$ ranges over all Go-diagrams contained in
a $k \times (n-k)$ rectangle.  In other words, each point in the
Grassmannian $Gr_{k,n}$ can be represented uniquely by a weighted
network associated to a Go-diagram.
\end{theorem}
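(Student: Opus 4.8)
The plan is to deduce Theorem~\ref{th:main} from the corresponding statements about the Deodhar decomposition of the flag variety (due to Deodhar \cite{Deodhar, Deodhar2}) and the Marsh--Rietsch parameterizations \cite{MR}, using the projection $G/B \to Gr_{k,n}$ together with the network parameterizations constructed in Section~\ref{sec:main}. Concretely, recall that Go-diagrams of shape $\lambda_w$ are in bijection with distinguished subexpressions $\v \prec \w$ for a fixed reduced word $\w$ of $w \in W^k$; and that Deodhar \cite{Deodhar} proved $G/B = \bigsqcup_{\v \prec \w} \RR_{\v,\w}$, with each Deodhar component $\RR_{\v,\w} \cong (\K^*)^{|J^+_\v|} \times (\K)^{|J^\bullet_\v|}$. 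Passing to the Grassmannian via the projection $\pi\colon G/B \to Gr_{k,n}$, one obtains a decomposition of $Gr_{k,n}$ into the images $\mathcal{R}_D := \pi(\RR_{\v,\w})$, indexed by Go-diagrams $D$, and each $\mathcal{R}_D$ inherits the structure $(\K^*)^t \times \K^u$ where $t = |J^+_\v|$ is the number of pluses and $u = |J^\bullet_\v|$ the number of black stones of $D$. (These facts about the Deodhar decomposition of $Gr_{k,n}$ are set up carefully in Section~\ref{sec:project}; I will cite them.) So the essential content left to prove is that the network component $\mathcal{R}_D$ defined via the weight matrix $W_D$ coincides, as a subset of $Gr_{k,n}$ with its $(\K^*)^t\times\K^u$ parameterization, with the Deodhar component attached to the same Go-diagram.

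The key step, therefore, is a comparison of the two parameterizations: the Marsh--Rietsch parameterization of $\RR_{\v,\w}$ (reviewed in Section~\ref{sec:project}), projected to $Gr_{k,n}$, versus the network parameterization $A \mapsto W_D(a_i, c_i)$. I would proceed as follows. First, write the projected Marsh--Rietsch parameterization explicitly as a $k \times n$ matrix representative: the Marsh--Rietsch map is a product over the letters of $\w$ of factors that are either a simple unipotent/torus element (at positions in $J^+_\v$, contributing a free parameter in $\K^*$), a transposed-unipotent element (at positions in $J^\bullet_\v$, contributing a free parameter in $\K$), or a fixed simple reflection (at positions in $J^\circ_\v$); applying this product of $7\times 7$-type matrices to a standard flag and then taking the span of the appropriate $k$ rows gives a matrix whose entries are Laurent-polynomial, resp.\ polynomial, in the parameters. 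Second, show that after an explicit rational (in fact monomial) change of variables relating the Marsh--Rietsch parameters to the network weights $a_i, c_i$, this matrix equals $W_D$. This is the content promised in the introduction ("after a rational transformation of variables, our network parameterizations coincide with the projections of the Marsh--Rietsch parameterizations"), and it is carried out in Sections~\ref{sec:MR-matrix} and~\ref{sec:proof}; here I would cite those sections as providing precisely this matrix identity. Given the identity, the network parameterization is a reparameterization of the Deodhar component, hence a bijection from $(\K^*)^t \times \K^u$ onto $\mathcal{R}_D$, establishing the isomorphism claim. The disjoint-union statement $Gr_{k,n} = \bigsqcup_D \mathcal{R}_D$ then follows immediately from the disjoint-union statement for the projected Deodhar decomposition, since the index sets (Go-diagrams in the $k\times(n-k)$ rectangle $\leftrightarrow$ distinguished subexpressions of $\w_0^k$ restricted to sub-shapes) are the same, and Remark~\ref{rem:independence} guarantees the combinatorics does not depend on the choice of reading order. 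Uniqueness of the representing network for a given point is exactly the injectivity of the parameterization plus disjointness of the components.

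The main obstacle is the matrix identity in the second key step: one must verify that the projection to $Gr_{k,n}$ of an iterated product of Marsh--Rietsch factors, associated to a reading order of $\lambda_w$, reorganizes into the path-counting form of $W_D$ under the change of variables. This requires (a) choosing compatible reading orders / reduced words on both sides so that the positions of the pluses, white stones, and black stones match up with the horizontal and vertical edges of $N_D$; (b) tracking how each factor either introduces a new free edge-weight or (at a white stone, i.e.\ a position in $J^\circ_\v$) performs a "pivot" that is invisible combinatorially but shifts which boundary vertices are sources; and (c) recognizing the resulting entries as generating functions for directed paths in $N_D$, with the signs matching the prescription in Definition~\ref{def:weightmatrix}. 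The change of variables itself is a triangular monomial substitution, so it is invertible over $\K^*$, but bookkeeping the signs and the non-planarity (the curved edges in Figure~\ref{fig:network}) correctly is where the real work lies; everything else reduces to citing the structural results of Deodhar and Marsh--Rietsch already recalled in the paper.
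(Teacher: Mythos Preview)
Your proposal is correct and follows essentially the same approach as the paper: reduce to the known Deodhar/Marsh--Rietsch decomposition of $Gr_{k,n}$ (Section~\ref{sec:project}) and then prove that, after an explicit invertible monomial change of variables $\Psi$, the network weight matrix $W_D$ and the projected MR-matrix represent the same point of the Grassmannian (Sections~\ref{sec:MR-matrix} and~\ref{sec:proof}, specifically Theorem~\ref{th:row}). The only refinement worth noting is that the matrices are not literally equal under $\Psi$; rather $\Psi(W_D)$ equals the reduced row-echelon form of the MR-matrix, which is what the paper establishes via the pseudopath interpretation and the cancellation argument in the proof of Theorem~\ref{th:row}.
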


A more refined version of Theorem \ref{th:main} is given in
Corollary \ref{cor:2params}.

\begin{corollary}
Every $k \times \ell$ matrix $M$ can be represented by a unique weighted network associated to a Go-diagram contained in a $k \times \ell$ rectangle.
\end{corollary}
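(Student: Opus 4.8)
The plan is to read the corollary off from Theorem~\ref{th:main}; the only preliminary worth isolating is that the weight matrix $W_D$ of a Go-network (Definition~\ref{def:weightmatrix}) is not merely \emph{some} matrix representative of the point it defines, but the \emph{canonical} one. So first I would check that, for any Go-diagram $D$ and any admissible weighting of $N_D$, the matrix $W_D$ has full rank $k$ and is in reduced row echelon form, with pivot columns equal to the source set $I=\{i_1<\dots<i_k\}$ of $N_D$. This is immediate from Definition~\ref{def:network}: the boundary vertices in $I$ are sources of the directed graph and hence have no incoming edge, so for $s\in I$ the only directed path from a source $r$ to $s$ is the empty path, which forces $r=s$; thus $W_{rs}=\delta_{rs}$ for $s\in I$, so the $I$-submatrix of $W_D$ is the identity and $\rank W_D=k$. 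A short argument with the orientation conventions (a source can only route paths toward the southwest along the Young-diagram boundary, i.e.\ to boundary vertices of larger label) then shows that $W_D$ is in fact in reduced row echelon form with pivot set $I$. In particular, ``matrix'' in the statement must be read as a full-rank $k\times n$ matrix, equivalently a matrix representing a point of $Gr_{k,n}$, since no weight matrix has smaller rank.

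Granting this, the corollary is a direct translation of Theorem~\ref{th:main}. Given a full-rank $k\times n$ matrix $M$, let $p=[M]\in Gr_{k,n}$ be the point it represents. By Theorem~\ref{th:main} there is a unique Go-diagram $D$ with $p\in\mathcal{R}_D$, and since by that theorem the map sending a weighting of $N_D$ to the corresponding point is an isomorphism $(\K^*)^t\times\K^u\xrightarrow{\ \sim\ }\mathcal{R}_D$, it is in particular injective, so there is a unique admissible weighting of $N_D$ whose weight matrix represents $p$. That weight matrix has the same row span as $M$, i.e.\ $M$ is represented by the weighted network $N_D$. For uniqueness, any weighted Go-network representing $M$ also represents $p$, hence $p$ lies in the corresponding network component; disjointness of the $\mathcal{R}_D$ forces that Go-diagram to be $D$, and injectivity of the parameterization of $\mathcal{R}_D$ then forces the weighting to coincide. (If one prefers the sharper reading ``every reduced row echelon $k\times n$ matrix is the weight matrix of a unique weighted Go-network'', it follows identically, since such matrices form a complete irredundant set of representatives for $Gr_{k,n}$ and by the first paragraph each $W_D$ is of this form.)

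There is no genuine obstacle here: all the content lies in Theorem~\ref{th:main}, and the corollary merely repackages it with ``point of $Gr_{k,n}$'' replaced by ``matrix representative''. The one step I would take care to state explicitly is the reduced-row-echelon observation of the first paragraph, since it is exactly what licenses the passage from points to matrices; it also makes the construction effective, as given $M$ one computes its reduced row echelon form, reads off from the pattern of vanishing Pl\"ucker coordinates which Deodhar component it lies in, and then reconstructs $D$, the network $N_D$, and its weights.
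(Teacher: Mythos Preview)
You have misread the statement. In the paper, ``every matrix'' means every $k\times(n-k)$ matrix $M$ over $\K$, with no rank restriction. The content of the corollary is precisely that arbitrary (possibly rank-deficient) matrices, not just full-rank $k\times n$ matrices, admit network representations. Your reading, by contrast, restricts to full-rank $k\times n$ matrices and then observes that $W_D$ is in reduced row echelon form; as you yourself note, under that reading the corollary collapses to a restatement of Theorem~\ref{th:main}, so your argument is correct but proves nothing beyond the theorem.

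The paper's approach is to embed an arbitrary $k\times(n-k)$ matrix $M$ into a full-rank $k\times n$ matrix $A(M)$ by adjoining an identity block in the first $k$ columns (with a sign and row-reversal twist). Since $A(M)$ is automatically full rank and lies in the big Schubert cell, Theorem~\ref{th:main} applies to it and produces a unique weighted Go-network whose weight matrix is $A(M)$; the entries of the original $M$ then appear (up to sign) as the path-enumeration entries of $W_D$ in the non-pivot columns. Your reduced-row-echelon observation is in fact implicitly used here too---it is why $W_D$ equals $A(M)$ on the nose rather than merely up to row operations---so that part of your write-up is useful. What is missing is the embedding step $M\mapsto A(M)$, which is the whole point of the corollary.
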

\begin{proof}
Let $n=k+\ell$.  Suppose $M=(m_{i,j})$ is a $k\times (n-k)$ matrix.
%Let $A(M)$ be the full rank $k \times n$ matrix with an identity
%submatrix in the first $k$ columns and the remaining columns given by
%$a_{i,j+k} = m_{i,j}$.
Let $A(M)=(a_{i,j})$ be the full rank $k\times n$ matrix with an identity submatrix in the first $k$ columns and the remaining columns given by $a_{i,j+k}=(-1)^{i+1}m_{n+1-i,j}$. Then $A(M)$ represents an element in the Grassmannian
$Gr_{k,n}$, so Theorem \ref{th:main} applies.  Moreoever, the minors of $M$ are in bijection with the $k\times k$ minors of $A(M)$, so if $A(M)$ is represented by the network $N$, we see that $m_{i,j}$ enumerates paths from the boundary source $i$ to the boundary vertex $j$ in $N$.
\end{proof}

We will prove Theorem \ref{th:main} by showing that each network
component $\mathcal{R}_D$ from a Go-diagram coincides with a \emph{(projected)
Deodhar component} $\mathcal{P}_{\v,\w}$
in the Grassmannian. (Therefore we may refer to
each $\mathcal{R}_D$ as a \emph{Deodhar component}.)
More specifically, such Deodhar components
have parameterizations due to Marsh and Rietsch \cite{MR}, and we will show
that after an invertible transformation of variables,
our network parameterizations coincide with theirs.

%%%%%%%%%%%%%%%%%%%%   Deodhar decomposition   %%%%%%%%%%

\section{The Deodhar decomposition of the Grassmannian}
\label{sec:project}

In this section we review Deodhar's decomposition of the
flag variety  $G/B$ \cite{Deodhar}, and the parameterizations of the components due to Marsh
and Rietsch \cite{MR}.
The Deodhar decomposition of the Grassmannian is obtained by
projecting the Deodhar decomposition of $G/B$ to the Grassmannian
\cite{Deodhar2}.

\subsection{The flag variety}\label{sec:flag}

Let $\K$ be a field, and
let $G$ denote the special linear group $\SL_n = \SL_n(\K)$.
Fix a maximal torus $T$, and opposite Borel subgroups $B^+$ and $B^-$, which
consist of the diagonal, upper-triangular, and lower-triangular
matrices, respectively.
Let $U^+$ and $U^-$ be the unipotent radicals of $B^+$ and $B^-$; these
are the subgroups of upper-triangular and lower-triangular matrices with $1$'s on the diagonals.
For each $1 \leq i \leq n-1$ we have a homomorphism
$\phi_i:{\rm SL}_2\to {\rm SL}_n$ such that
\[
\phi_i\begin{pmatrix} a& b\\c&d\end{pmatrix}=
\begin{pmatrix}
1 &             &       &       &           &     \\
   &\ddots  &        &      &            &        \\
   &             &   a   &   b  &          &       \\
   &             &   c    &   d  &         &       \\
   &            &          &       &  \ddots  &     \\
   &            &          &       &              & 1
   \end{pmatrix} ~\in {\rm SL}_n,
\]
that is, $\phi_i$ replaces a $2\times 2$ block of the identity matrix with $\begin{pmatrix} a&b\\c&d\end{pmatrix}$.
Here $a$ is at the $(i+1)$st diagonal entry counting from the southeast corner.
(Correspondingly, we will label the rows of such a matrix from bottom
to top, and the columns of such a matrix from right to left.)
We have $1$-parameter subgroups of $G$
defined by
\begin{equation*}
x_i(m) = \phi_i \left(
                   \begin{array}{cc}
                     1 & m \\ 0 & 1\\
                   \end{array} \right)  \text{ and }\ %\qquad
y_i(m) = \phi_i \left(
                   \begin{array}{cc}
                     1 & 0 \\ m & 1\\
                   \end{array} \right) ,\
\text{ where }m \in \K.
\end{equation*}

Let $W$ denote the Weyl group $ N_G(T) / T$,
where $N_G(T)$ is the normalizer of $T$.
The simple reflections $s_i \in W$ are given by
$s_i:= \dot{s_i} T$ where $\dot{s_i} :=
                 \phi_i \left(
                   \begin{array}{cc}
                     0 & -1 \\ 1 & 0\\
                   \end{array} \right)$
and any $w \in W$ can be expressed as a product $w = s_{i_1} s_{i_2}
\dots s_{i_\ell}$ with $\ell=\ell(w)$ factors.  We set $\dot{w} =
\dot{s}_{i_1} \dot{s}_{i_2} \dots \dot{s}_{i_\ell}$.
In our setting $W$ is isomorphic to
$\Sym_n$, the symmetric group on $n$ letters,
and $s_i$ corresponds to the transposition exchanging $i$ and $i+1$.

We can identify the flag variety $G/B$ with the variety
$\B$ of Borel subgroups, via
\begin{equation*}
gB \longleftrightarrow g \cdot B^+ := gB^+ g^{-1}.
\end{equation*}
We have two opposite Bruhat
decompositions of $\B$:
\begin{equation*}
\mathcal B=\bigsqcup_{w\in W} B^+\dot w\cdot B^+=\bigsqcup_{v\in W}
B^-\dot v\cdot B^+.
\end{equation*}
We define the intersection of opposite Bruhat cells
\begin{equation*}
\mathcal R_{v,w}:=B^+\dot w\cdot B^+\cap B^-\dot v\cdot B^+,
\end{equation*}
which is nonempty
precisely when $v\le w$.
The strata $\mathcal R_{v,w}$ are often called \emph{Richardson varieties}.

\subsection{Deodhar components in the flag variety}

We now describe the Deodhar decomposition of the  flag variety.
Marsh and Rietsch \cite{MR} gave explicit parameterizations for each Deodhar
component, identifying each one with a subset in the group.

\begin{definition}\cite[Definition 5.1]{MR}\label{d:factorization}
Let $\w=s_{i_1} \dots s_{i_m}$ be a reduced expression for $w$,
and let $\v$ be a distinguished subexpression.
Define a subset $G_{\v,\w}$ in $G$ by
\begin{equation}\label{e:Gvw}
G_{\v,\w}:=\left\{g= g_1 g_2\cdots g_m \left
|\begin{array}{ll}
 g_\ell= x_{i_\ell}(m_\ell)\dot s_{i_\ell}\inv& \text{ if $\ell\in J^{\bullet}_\v$,}\\
 g_\ell= y_{i_\ell}(p_\ell)& \text{ if $\ell\in J^{+}_\v$,}\\
 g_\ell=\dot s_{i_\ell}& \text{ if $\ell\in J^{\circ}_\v$,}
 \end{array}\quad \text{
for $p_\ell\in\K^*,\, m_\ell\in\K$. }\right. \right\}.
\end{equation}
There is an obvious map $(\K^*)^{|J^{+}_\v|}\times \K^{|J^{\bullet}_\v|}\to
G_{\v,\w}$ defined by the parameters $p_\ell$ and $m_\ell$ in
\eqref{e:Gvw}.
%For $\v =\w=(1)$
For $v =w=1$
we define $G_{\v,\w}=\{1\}$.
\end{definition}

\begin{example}\label{ex:g}
Let $W=\Sym_8$, $\w = s_6 s_7 s_4 s_5 s_6 s_3 s_4 s_5 s_1 s_2 s_3 s_4$ and
$\v = s_6 1 s_4 1 s_6 1 s_4 1 1 1 1 1$. This is the distinguished
expression $\v$ encoded by the diagram from Figure
\ref{fig:network} (which is a Go-diagram).
Then the corresponding element $g\in G_{\v,\w}$ (the MR-matrix)
is given by
\begin{equation} \label{eq:MR-product}
g=\dot s_6 y_7(p_2)\dot s_4 y_5(p_4) x_6(m_5) \dot s_6^{-1} y_3(p_6) x_4(m_7) \dot s_4^{-1} y_5(p_8) y_1(p_9) y_2(p_{10}) y_3(p_{11}) y_4(p_{12}),
\end{equation}
which is
\[
\left(
  \begin{array}{cccccccc}
    1 & 0 & 0 & 0 & 0 & 0 & 0 & 0 \\
    0 & 1 & 0 & 0 & 0 & 0 & 0 & 0 \\
    p_2 & -m_5 & 1 & 0 & 0 & 0 & 0 & 0 \\
    0 & 0 & p_8 & 1 & 0 & 0 & 0 & 0 \\
    0 & -p_4 & -m_7p_8 & -m_7+p_{12} & 1 & 0 & 0 & 0 \\
    0 & 0 & -p_6p_8 & -p_6+p_{11}p_{12} & p_{11} & 1 & 0 & 0 \\
    0 & 0 & 0 & p_{10}p_{11}p_{12} & p_{10}p_{11} & p_{10} & 1 & 0 \\
    0 & 0 & 0 & p_9p_{10}p_{11}p_{12} & p_9p_{10}p_{11} & p_9p_{10} & p_9 & 1 \\
  \end{array}
\right).
\]
\end{example}

The following result from \cite{MR} gives an explicit parametrization for
the Deodhar component $\mathcal R_{\v,\w}$.
We use Proposition \ref{p:parameterization} as the
\emph{definition}
of $\mathcal R_{\v,\w}$.

\begin{proposition}\label{p:parameterization}\cite[Proposition 5.2]{MR}
The map $(\K^*)^{|J^{+}_\v|}\times \K^{|J^{\bullet}_\v|}\to G_{\v,\w}$ from
Definition~\ref{d:factorization} is an isomorphism.
The map $g\mapsto g\cdot B^+$ defines an isomorphism
\begin{align}\label{e:parameterization}
G_{\v,\w}&~\overset\sim\To ~\mathcal R_{\v,\w}
\end{align}
between the subset $G_{\v,\w}$ of the group,
and the Deodhar component $\mathcal R_{\v,\w}$
 in $G/B$.
\end{proposition}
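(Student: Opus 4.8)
The plan is to prove the proposition by induction on $m=\ell(w)$, the length of the fixed reduced word $\w=s_{i_1}\cdots s_{i_m}$. The base case $m=0$ is trivial ($w=v=1$, $G_{\v,\w}=\{1\}$, $\mathcal R_{1,1}=\{B^+\}$). For the inductive step I would peel off the last letter: write $\w=\w' s$ with $s:=s_{i_m}$, so that $\w':=s_{i_1}\cdots s_{i_{m-1}}$ is a reduced word for $w':=ws$ with $\ell(w')=m-1$, and let $\v'$ be the truncation (first $m-1$ letters) of $\v$. Since the defining inequalities \eqref{e:dist} for $\v$ being distinguished are, in their first $m-1$ instances, exactly the ones for $\v'$, the subexpression $\v'$ is automatically distinguished in $\w'$ and represents $v':=v_{(m-1)}$; so the inductive hypothesis applies to $(\v',\w')$: the parameterization map is an isomorphism onto $G_{\v',\w'}$, and $g'\mapsto g'B^+$ is an isomorphism $G_{\v',\w'}\xrightarrow{\sim}\mathcal R_{\v',\w'}\subseteq\mathcal R_{v',w'}$.

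By Definition~\ref{d:factorization}, $G_{\v,\w}=G_{\v',\w'}\cdot g_m$, where, according as $m$ lies in $J^{\circ}_{\v}$, $J^{+}_{\v}$, or $J^{\bullet}_{\v}$, one has $g_m=\dot s$ (and $v=v's>v'$), $g_m=y_{i_m}(p_m)$ with $p_m\in\K^*$ (and $v=v'$, with $v's>v'$ forced by \eqref{e:dist}), or $g_m=x_{i_m}(m_m)\dot s^{-1}$ with $m_m\in\K$ (and $v=v's<v'$). The main geometric tool is the $\mathbb P^1$-bundle $\pi\colon G/B^+\to G/P_s$, where $P_s=B^+\sqcup B^+\dot s B^+$: in all three cases $g_m\in P_s$, so right multiplication by $g_m$ moves $g'B^+$ within the single fiber $F:=\pi^{-1}(\pi(g'B^+))$. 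I would combine this with: (i) $B^+\dot w'B^+\cdot\dot s=B^+\dot wB^+$ since $\ell(w)=\ell(w')+1$, so $\pi$ restricts to an isomorphism on $B^+\dot w'B^+$; and (ii) the analogue for the opposite cells $B^-\dot u B^+$, whose direction is governed — via $B^-=\dot w_0 B^+\dot w_0^{-1}$, the identity $\ell(w_0 u)=\ell(w_0)-\ell(u)$, and the ordinary Bruhat product — by whether $us$ is longer or shorter than $u$. Tracing $F$ through these facts: when $m\in J^{\circ}_{\v}$, $g'\mapsto g'\dot sB^+$ restricts to an isomorphism $\mathcal R_{\v',\w'}\xrightarrow{\sim}\mathcal R_{\v,\w}$ (the dimensions agree, since $\ell(w)-\ell(v)=\ell(w')-\ell(v')$); when $m\in J^{+}_{\v}$, $p_m\mapsto g'y_{i_m}(p_m)B^+$ sweeps out $F$ with two points deleted, landing exactly in $\mathcal R_{v,w}$, so over $\mathcal R_{\v',\w'}$ we obtain a $\mathbb G_m$-bundle; and when $m\in J^{\bullet}_{\v}$, the rank-one computation $x_{i_m}(m_m)\dot s^{-1}=\phi_{i_m}\!\bigl(\begin{smallmatrix}-m_m&1\\-1&0\end{smallmatrix}\bigr)$ shows $m_m\mapsto g'x_{i_m}(m_m)\dot s^{-1}B^+$ is an isomorphism from $\K$ onto $F$ with one point deleted, which is again exactly $F\cap\mathcal R_{v,w}$, so over $\mathcal R_{\v',\w'}$ we obtain an $\mathbb A^1$-bundle. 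In each case the new bundle is trivial and the parameter count matches the claim.

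For injectivity — of both the parameterization $(\K^*)^{|J^{+}_{\v}|}\times\K^{|J^{\bullet}_{\v}|}\to G_{\v,\w}$ and of $g\mapsto gB^+$ on $G_{\v,\w}$ — and for the fact that the inverses are morphisms, I would run the same simultaneous induction: since $g_m\in P_s$ we have $\pi(gB^+)=\pi(g'B^+)$, and since $\pi$ is an isomorphism on $B^+\dot w'B^+\supseteq\mathcal R_{v',w'}$, the point $g'B^+$ is recovered from $gB^+$ as a morphism; by the inductive hypothesis $g'$ itself is then recovered, hence $g_m=(g')^{-1}g$ and its parameter, and a trivial rank-one check on the last factor finishes injectivity. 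Since all the recovered data are rational functions of matrix entries, the inverses are morphisms and we obtain isomorphisms of varieties, not merely bijections. (One also gets $\mathcal R_{v,w}=\bigsqcup_{\v\prec\w}\mathcal R_{\v,\w}$ — Deodhar's decomposition — either from the fibration picture by induction or by comparing with Deodhar's theorem that such a paving exists; this is what identifies $\mathcal R_{\v,\w}$ as the Deodhar component inside the Richardson variety.)

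The step I expect to be hardest is the black-stone case $m\in J^{\bullet}_{\v}$, together with the bookkeeping on the opposite cells. There the source point $g'B^+$ already sits in the $B^-$-cell $\mathcal R_{v',w'}$ with $v's<v'$, so $\pi$ loses information on the $B^-$ side and one cannot just invoke ``$\pi$ is an isomorphism'' symmetrically; instead one must verify precisely that $F\cap B^-\dot vB^+=F\setminus\{g'B^+\}$ (using $v=v's<v'$), that the factor $x_{i_m}(m_m)\dot s^{-1}$ sweeps exactly this $\mathbb A^1$, and that the $B^+$-cell jumps up by one while the $B^-$-cell drops by one. Keeping the identification $B^-=\dot w_0 B^+\dot w_0^{-1}$, the length inequalities, and the signs in the definitions of $x_i$, $y_i$, $\dot s_i$ consistent across all three cases is the delicate, bookkeeping-heavy part of the argument; once it is set up, the case-by-case verifications are short.
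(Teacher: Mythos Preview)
The paper does not actually prove this proposition: it is quoted from \cite{MR} (Marsh--Rietsch, Proposition~5.2), and the authors explicitly say they ``use Proposition~\ref{p:parameterization} as the \emph{definition} of $\mathcal R_{\v,\w}$.'' So there is no proof in the paper to compare your proposal against.

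That said, your inductive argument peeling off the last letter of $\w$ and analyzing the $\mathbb P^1$-fiber over $G/P_s$ is essentially the standard proof, and is the approach taken in \cite{MR} (building on Deodhar's original fibration argument). Your identification of the three cases with $\mathbb G_m$-, $\mathbb A^1$-, and point-bundles is correct, and your anticipation that the $J^\bullet$ case requires the most care (since $\pi$ is not an isomorphism on the $B^-$-side there) is well placed. The sketch is sound; to make it a full proof you would need to carry out the fiber computations you describe, but nothing is missing at the level of strategy.
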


Suppose that for each $w\in W$ we choose a reduced expression
$\w$ for $w$.  Then it follows from \cite{Deodhar} and
\cite[Section 4.4]{MR} that
\begin{equation}\label{e:DeoDecomp}
\mathcal R_{v,w} = \bigsqcup_{\v \prec \w} \mathcal R_{\v,\w}\qquad  \text{ and }
\qquad  G/B=\bigsqcup_{w\in W}\left(\bigsqcup_{\v\prec \w} \mathcal
 R_{\v,\w}\right),
\end{equation}
where in the first sum $\v$ ranges over all distinguished subexpressions
for $v$ in $w$, and in the second sum $\v$ ranges over all distinguished
subexpressions of $\w$.
These two decompositions
are called the \emph{Deodhar decompositions} of $\mathcal R_{v,w}$ and  $G/B$.

\begin{remark}\label{rem:dependence}
Although the Deodhar decomposition of $\mathcal R_{v,w}$
depends on the choice $\w$ of
reduced expression for $w$,
its projection to the Grassmannian does not depend on $\w$ \cite[Proposition 4.16]{KW2}.
\end{remark}

\subsection{Projections of Deodhar components to the Grassmannian}\label{sec:projections}

Following \cite{KW2}, we now consider the projection of the Deodhar decomposition to
the Grassmannian
$Gr_{k,n}$ for $k<n$.  Given the permutation
$w = (w(1), w(2),\dots, w(n))\in W^k$,
we let $I(w)$ denote the $k$-element subset $\{w(n-k+1), w(n-k+2),\dots,w(n)\}$ of $[n]$.
The map $I$ gives a bijection between $W^k$ and $k$-element subsets of $[n]$.

Let $\pi_k: G/B \to Gr_{k,n}$ be the projection from the flag variety to the Grassmannian; this is an isomorphism on each
$\mathcal R_{v,w}$.
For each $w \in W^k$ and $v \leq w$, define
$\mathcal P_{v,w} = \pi_k(\mathcal R_{v,w})$.  Then by
\cite{Lusztig2}
we have a decomposition
\begin{equation}\label{projected-Richardson}
Gr_{k,n} = \bigsqcup_{w \in W^k} \left(\bigsqcup_{v \leq w} \mathcal P_{v,w} \right).
\end{equation}

For each reduced decomposition $\w$ for $w \in W^k$, and each $\v \prec \w$,
we define $\mathcal P_{\v,\w} = \pi_k(\mathcal R_{\v,\w})$.
Now if for each $w \in W^k$ we choose a reduced decomposition $\w$, then we have
\begin{equation}\label{e:ProjDeoDecomp}
\mathcal P_{v,w} = \bigsqcup_{\v \prec \w} \mathcal P_{\v,\w}\qquad \text{ and }\qquad
 Gr_{k,n} =\bigsqcup_{w\in W^k} \left(\bigsqcup_{\v\prec \w} \mathcal
 P_{\v,\w}\right),
\end{equation}
where in the first sum $\v$ ranges over all distinguished subexpressions
for $v$ in $w$, and in the second sum $\v$ ranges over all distinguished
subexpressions of $\w$.

Proposition \ref{p:parameterization} gives  a concrete way to
think about the projected Deodhar components $\mathcal P_{\v,\w}$.
The projection $\pi_k: G/B \to Gr_{k,n}$ maps
$g \cdot B^+ \in R_{\v,\w}$ (where $g\in G_{\v,\w}$) to the span of
the leftmost $k$ columns of $g$.
More specifically, it maps
\[
g=\begin{pmatrix}
g_{n,n} & \dots& g_{n,n-k+1} & \dots & g_{n,1}  \\
\vdots & &\vdots&  & \vdots  \\
g_{1,n} & \dots& g_{1,n-k+1} & \dots & g_{1,1} \\
\end{pmatrix}
\quad \longrightarrow\quad
M=
\begin{pmatrix}
g_{1,n-k+1}&  \dots & g_{n,n-k+1}\\
\vdots &  & \vdots\\
g_{1,n} &  \dots & g_{n,n} \\
\end{pmatrix}
\]
We call the resulting $k \times n$ matrix $M=(M_{st})$ the
\emph{MR-matrix}.  To simplify the notation  later,
we will label its rows from top to bottom by
$i_1, i_2, \dots, i_k$, where $\{i_1 < \dots < i_k \} = I(w)$.

\begin{remark}
Recall from Section \ref{Deodhar-combinatorics}
that in the Grassmannian setting
(i.e.
$W_k = \langle s_1,s_2,\dots,\hat{s}_{n-k},\dots,s_{n-1} \rangle$
is a parabolic subgroup of $W = \Sym_n$), the
distinguished subexpressions of $W^k$ are in bijection with
Go-diagrams.  Therefore each Go-diagram gives rise to an MR-matrix.
\end{remark}

\begin{example}\label{ex:mr-matrix}
We continue Example \ref{ex:g}.  Note that
$w \in W^k$, where $k=2$.  Then the map\\ $\pi_2: G_{\v,\w}\to Gr_{2,5}$
is given by
\[
g=\left(
  \begin{array}{cccc|cccc}
    1 & 0 & 0 & 0 & 0 & 0 & 0 & 0 \\
    0 & 1 & 0 & 0 & 0 & 0 & 0 & 0 \\
    p_2 & -m_5 & 1 & 0 & 0 & 0 & 0 & 0 \\
    0 & 0 & p_8 & 1 & 0 & 0 & 0 & 0 \\
    0 & -p_4 & -m_7p_8 & -m_7+p_{12} & 1 & 0 & 0 & 0 \\
    0 & 0 & -p_6p_8 & -p_6+p_{11}p_{12} & p_{11} & 1 & 0 & 0 \\
    0 & 0 & 0 & p_{10}p_{11}p_{12} & p_{10}p_{11} & p_{10} & 1 & 0 \\
    0 & 0 & 0 & p_9p_{10}p_{11}p_{12} & p_9p_{10}p_{11} & p_9p_{10} & p_9 & 1 \\
  \end{array}
\right)
\quad\longrightarrow \quad
\]
\[ M=
\left(
\begin{array}{cccccccc}
  % after \\: \hline or \cline{col1-col2} \cline{col3-col4} ...
  p_9p_{10}p_{11}p_{12} & p_{10}p_{11}p_{12} & -p_6+p_{11}p_{12} & -m_7+p_{12} & 1 & 0 & 0 & 0 \\
  0 & 0 & -p_6p_8 & -m_7p_8 & p_8 & 1 & 0 & 0 \\
  0 & 0 & 0 & -p_4 & 0 & -m_5 & 1 & 0 \\
  0 & 0 & 0 & 0 & 0 & p_2 & 0 & 1 \\
\end{array}\right)
\]
We label the rows of $M$ from top to bottom by
the index set $\{1, 3, 4, 6\}$, and the columns from
left to right by the index set $\{1,2, \dots, 8\}$, so e.g.
$M_{34} = -m_7 p_8$.
\end{example}

The following lemma is a consequence of \cite[Section 5.1]{KW2} and
in particular \cite[Corollary 5.8]{KW2}.

\begin{lemma}\label{lem:Mproperties}
Let $M = M_D$ be the MR-matrix associated to the diagram $D$.
The leftmost nonzero entry in row $i_{\ell}$ of $M$ is in column $i_{\ell}$.
Furthermore, that entry is equal to $(-1)^b \prod p_i$,
where $b$ is the number of black stones in the row $i_\ell$ of $D$,
and the product is over all boxes in the row $i_\ell$
of the labeled Go-diagram of $D$
containing a $p_i$.
\end{lemma}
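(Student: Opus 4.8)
The plan is to push the statement through the isomorphism $\pi_k$ of Proposition~\ref{p:parameterization} to an assertion about the matrix $g\in G_{\v,\w}$, and then to prove the ``which column the pivot lies in'' part from the Bruhat decomposition and the ``what the pivot equals'' part by induction on the factorization $g=g_1g_2\cdots g_m$. Recall from the description of $\pi_k$ that the $\ell$-th row of $M$ from the top --- the row indexed $i_\ell$, where $I(w)=\{i_1<\dots<i_k\}$ --- is $(g_{1,\,n-k+\ell},g_{2,\,n-k+\ell},\dots,g_{n,\,n-k+\ell})$, i.e.\ the column of $g$ carrying the (right-to-left) label $n-k+\ell$. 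Since $w\in W^k$ has its only possible descent in position $n-k$, $w(n-k+1)<\dots<w(n)$, so $i_\ell=w(n-k+\ell)$; and tracing the southeast boundary of $\lambda_w$ one finds that the $\ell$-th row of $\lambda_w$ from the top is exactly the one called ``row $i_\ell$'' in the statement, with $i_\ell=n-k+\ell-\lambda_\ell$. Thus the lemma is equivalent to: \emph{(a)} $g_{t,\,n-k+\ell}=0$ for all $t<i_\ell$; and \emph{(b)} $g_{i_\ell,\,n-k+\ell}=(-1)^b\prod p_i$, the product being over the $p$-boxes of the $\ell$-th row of the labelled Go-diagram and $b$ the number of black stones in that row.

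For (a) I would use only $\mathcal R_{\v,\w}\subseteq\mathcal R_{v,w}\subseteq B^+\dot w\cdot B^+$: after absorbing a torus factor through $\dot w$ we may write $g=u\,\dot w\,b$ with $u\in U^+$ and $b\in B^+$. Then $b\,e_{n-k+\ell}$ is supported on $\{e_t: t\ge n-k+\ell\}$, $\dot w$ carries this into $\operatorname{span}\{e_{w(t)}: t\ge n-k+\ell\}\subseteq\operatorname{span}\{e_s: s\ge i_\ell\}$ because $w\in W^k$, and $u\in U^+$ preserves that span; this gives (a). Extracting the $e_{i_\ell}$-coefficient through the same three steps and using that $u$ is unipotent shows in addition that $g_{i_\ell,\,n-k+\ell}$ equals $\pm$ the $(n-k+\ell)$-th diagonal entry of the Borel factor $b$, so (b) becomes a statement about a single such entry.

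For (b) I would induct on $m$, building $g=g_1\cdots g_m$ one factor at a time in the standard reading order (bottom row right-to-left, then the row above, and so on). From the definition of $\phi_i$ one checks that right multiplication by a factor $y_a(p)$ (resp.\ $\dot s_a$, resp.\ $x_a(m)\dot s_a^{-1}$) changes only the columns labelled $a$ and $a+1$: $y_a(p)$ adds $p\cdot(\mathrm{col}_a)$ to $\mathrm{col}_{a+1}$; $\dot s_a$ sends $(\mathrm{col}_a,\mathrm{col}_{a+1})\mapsto(-\mathrm{col}_{a+1},\mathrm{col}_a)$; and $x_a(m)\dot s_a^{-1}$ sends $(\mathrm{col}_a,\mathrm{col}_{a+1})\mapsto(\mathrm{col}_{a+1},-\mathrm{col}_a-m\,\mathrm{col}_{a+1})$. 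The combinatorial facts driving the induction are: (i) the boxes of row $\ell$ of $\lambda_w$, in reading order, carry precisely the consecutive reflections $s_{i_\ell},s_{i_\ell+1},\dots,s_{n-k+\ell-1}$ (using $i_\ell=n-k+\ell-\lambda_\ell$); (ii) neither $s_{i_\ell-1}$ nor $s_{i_\ell}$ is carried by a box strictly below row $\ell$, so --- as the rows below are processed first --- column $i_\ell$ still equals $e_{i_\ell}$ when processing of row $\ell$ begins; and (iii) neither $s_{n-k+\ell-1}$ nor $s_{n-k+\ell}$ is carried by a box strictly above row $\ell$, so column $n-k+\ell$ is untouched after row $\ell$ is processed. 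Combining these, processing row $\ell$ pushes the ``seed'' $e_{i_\ell}$ from column $i_\ell$ successively into columns $i_\ell+1,\dots,n-k+\ell$, multiplying the seed's coefficient by $p_i$ at each plus-box and by $-1$ at each black-box (the sign coming from the $\dot s_a^{-1}$), while the parameters $m$ of the black stones and the contributions of all other rows only affect entries of row-index exceeding $i_\ell$; hence the $e_{i_\ell}$-entry of the final column $n-k+\ell$ is $(-1)^b\prod p_i$. (Equivalently, this is \cite[Corollary~5.8]{KW2}, which one could simply quote.)

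The main obstacle is making (b) precise: one must set up an inductive invariant on $g_1\cdots g_j$ that is preserved step by step yet sharp enough to pin down both the location and the exact value --- signs included --- of the leading entry of each relevant column, and one must verify that the ``chain'' assembled while processing row $\ell$ is genuinely untouched by every other box, i.e.\ that no $y$-, $\dot s$-, or $x(m)\dot s_a^{-1}$-step ever writes a parameter into the $e_{i_\ell}$-slot of column $n-k+\ell$ from outside row $\ell$ (this in turn needs that the columns of index between $i_\ell+1$ and $n-k+\ell$ have leading entries of index $>i_\ell$ at the moment row $\ell$ is reached, a consequence of the Bruhat structure of the relevant partial product). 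Throughout one must keep four conventions straight simultaneously --- the embeddings $\phi_i$, rows labelled bottom-to-top and columns right-to-left, the labelling of the boxes of $\lambda_w$ by simple reflections, and the standard reading order --- but the per-factor column operations themselves are entirely routine.
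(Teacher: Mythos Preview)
Your proposal is correct. In the paper this lemma is not proved at all; it is simply attributed to \cite[Section~5.1 and Corollary~5.8]{KW2}, which you yourself note in the final parenthetical. So what you have written is in fact \emph{more} than the paper provides: a self-contained argument in place of a citation.

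A brief comparison. The paper's route is purely referential: it invokes the analysis of \cite{KW2}, where the entries of the MR-matrix are studied via the factorization of $g$ and the corresponding Pl\"ucker coordinates. Your route unwinds the same factorization directly, translating right multiplication by each $y_a(p)$, $\dot s_a$, $x_a(m)\dot s_a^{-1}$ into the column operations you list and tracking the $e_{i_\ell}$-coordinate of the ``moving'' column across row~$\ell$. The structural facts (i)--(iii) you isolate are exactly what makes this tracking well-defined: (ii) ensures that when you begin processing row~$\ell$ the partial product has row~$i_\ell$ equal to $e_{i_\ell}^T$ (so every column other than the $i_\ell$-th has zero $e_{i_\ell}$-component, not merely column $i_\ell$ itself --- this is the content behind your remark that ``the contributions of all other rows only affect entries of row-index exceeding $i_\ell$''), and (iii) guarantees that column $n-k+\ell$ is frozen thereafter. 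With those in hand, the per-step update of the seed by $p$, $1$, or $-1$ gives (b) cleanly, and the same argument yields (a) without needing the separate Bruhat-cell computation you sketch first (though that alternative is also fine, and is closer in spirit to the Pl\"ucker-coordinate viewpoint of \cite{KW2}).
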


%%%%%%%%%%%%%%%%%%%%%%%%%%%%%%%%%%%%%%%%%%%%%%%%%%%%%%%%%%%

\section{Formulas for entries of the MR-matrices}
\label{sec:MR-matrix}

In this section we consider arbitrary diagrams (not necessarily
Go-diagrams) contained in a $k \times (n-k)$ rectangle
and the corresponding MR-matrices,
obtained by multiplying factors
$\dot s_{i}$, $y_i(p_j)$, $x_i(m_j) \dot s_i^{-1}$
as specified by the filling of the diagram, and then projecting
the resulting $n \times n$ matrix to a $k \times n$ matrix.
We will give formulas for the entries of the MR-matrices
in terms of \emph{pseudopaths} in the corresponding network.
For the purpose of giving this formula,
we will replace weights $a_i$ and $c_j$ on the edges of the network
by weights $p_i$ and $m_j$.

Recall that if $D$ is a diagram, its network $N_D$ has three types of vertices:
$+$-vertices, $\smallblack$-vertices, and
boundary vertices.
A \emph{step} on a network is an edge
between two vertices.
Let $W$ denote a single step west,
$S$ denote a single step south, and
$E$ denote either a single step east, or
an \emph{east-west combination step} consisting
of a step east, followed by a step west ending at a \smallblack.
 Let $A^*$ indicate
$0$ or more instances of a step of type $A$.

\begin{definition}\label{def:pseudopath}
A \emph{pseudopath} $\widetilde{P}$ on a network is a path on the
(undirected version of the) network such that:
\begin{itemize}
\item it starts and ends at two different boundary vertices,
or else is the empty path from a boundary vertex to itself;
\item it does not cross the same edge twice;
\item its sequence of steps (for a nonempty path) has the form
$$WW^*S(EE^*S)^*E^*.$$
\end{itemize}
\end{definition}
In particular, a pseudopath may not take two consecutive
steps south.

\begin{figure}
\label{fig:double-east}
\end{figure}

\begin{definition}\label{def:pseudoweight}
The \emph{weight} $w(\widetilde{P})$ of a pseudopath $\widetilde{P}$
in a network is a Laurent monomial
in $p_i$'s and $m_j$'s, which is obtained by multiplying the following terms:
\begin{itemize}
\item $\frac{1}{p_i}$ for every step west along an edge weighted $p_i$;
\item $p_i$ for every step east along an edge weighted $p_i$ which is
preceded by a step east;
\item $m_j$ for every step west along an edge weighted $m_j$;
\item $(-1)^{b+w}$, where $b$ (respectively $w$) is  the number of black (resp. white)
stones that the pseudopath skips over in the horizontal (resp. vertical) direction, when we superimpose the Go-diagram onto the network.
\end{itemize}
\end{definition}

\begin{example}\label{ex:pseudo}
In Figure \ref{fig:pseudopath}, there are
two pseudopaths from $1$ to $4$, with weights
$\frac{1}{p_9 p_{10} p_{11}}$ and $\frac{-m_7}{p_9 p_{10} p_{11} p_{12}}$, and there
is one pseudopath from $1$ to $5$, with weight
$\frac{1}{p_9 p_{10} p_{11} p_{12}}$.

%%%%%%%%%%%%%%%%%%%% network %%%%%%%%%%%%%%%%%%%%%%%%%%%%%
\begin{figure}[ht]
\begin{center}
\psset{unit=.5cm,dotstyle=o,dotsize=5pt 0,linewidth=0.8pt,arrowsize=3pt 2,arrowinset=0.25}
\begin{pspicture*}(0,0)(10,9)
% horizontal edges
\psline{-}(5,2)(2,2) \uput[u](2.5,2){$p_2$}
\psline[linewidth=2.5pt]{-}(7,4)(4,4) \uput[u](4.5,4){$p_4$}
\psline{-}(4,4)(2,4) \uput[u](2.5,4){$m_5$}
\psline{-}(7,6)(6,6) \uput[u](6.5,6){$p_6$}
\psline{-}(6,6)(4,6) \uput[u](4.5,6){$m_7$}
\pscurve{-}(6,6)(4,7)(2,6) \uput[u](2.5,6.2){$p_8$}
\psline[linewidth=2.5pt]{-}(9,8)(8,8) \uput[u](8.5,8){$p_9$}
\psline[linewidth=2.5pt]{-}(8,8)(6,8) \uput[u](6.5,8){$p_{10}$}
\psline[linewidth=2.5pt]{-}(6,8)(4,8) \uput[u](4.5,8){$p_{11}$}
\psline{-}(4,8)(2,8) \uput[u](2.5,8){$p_{12}$}

%vertical edges
\psline{-}(2,8)(2,6)
\pscurve{-}(2,6)(1.4,4)(2,2)
\psline{-}(2,4)(2,2)
\psline{-}(2,2)(2,1)
\pscurve[linewidth=2.5pt]{-}(4,8)(3.4,6)(4,4)
\psline{-}(4,6)(4,4)
\psline{-}(4,4)(4,1)
\psline{-}(6,8)(6,6)
\psline{-}(6,6)(6,3.02)
\psline{-}(8,8)(8,7)

%interior vertices
\psdots[dotstyle=*,linecolor=black](2,8)
\psdots[dotstyle=*,linecolor=black](2,6)
\psdots[dotsize=9pt 0,dotstyle=*,linecolor=black](2,4)
\psdots[dotstyle=*,linecolor=black](2,2)
\psdots[dotstyle=*,linecolor=black](4,4)
\psdots[dotsize=9pt 0,dotstyle=*,linecolor=black](4,6)
\psdots[dotstyle=*,linecolor=black](4,8)
\psdots[dotstyle=*,linecolor=black](6,8)
\psdots[dotstyle=*,linecolor=black](8,8)
\psdots[dotstyle=*,linecolor=black](6,6)
\psdots[dotstyle=*,linecolor=black](9,8)

%boundary vertices
\uput[r](9.08,8.12){\large{1}}
\psdots[dotstyle=*,linecolor=black](8,7)
\uput[r](8.08,7.12){\large{2}}
\psdots[dotstyle=*,linecolor=black](7,6)
\uput[r](7.08,6.12){\large{3}}
\psdots[dotstyle=*,linecolor=black](7,4)
\uput[r](7.08,4.12){\large{4}}
\psdots[dotstyle=*,linecolor=black](6,3)
\uput[r](6.08,3.12){\large{5}}
\psdots[dotstyle=*,linecolor=black](5,2)
\uput[r](5.08,2.12){\large{6}}
\psdots[dotstyle=*,linecolor=black](4,1)
\uput[r](4.08,1.12){\large{7}}
\psdots[dotstyle=*,linecolor=black](2,1)
\uput[r](2.08,1.12){\large{8}}

\end{pspicture*}\quad\begin{pspicture*}(0,0)(10,9)
% horizontal edges
\psline{-}(5,2)(2,2) \uput[u](2.5,2){$p_2$}
\psline[linewidth=2.5pt]{-}(7,4)(4,4) \uput[u](4.5,4){$p_4$}
\psline{-}(4,4)(2,4) \uput[u](2.5,4){$m_5$}
\psline{-}(7,6)(6,6) \uput[u](6.5,6){$p_6$}
\psline[linewidth=2.5pt]{-}(6,6)(4,6) \uput[u](4.5,6){$m_7$}
\pscurve[linewidth=2.5pt]{-}(6,6)(4,7)(2,6) \uput[u](2.5,6.2){$p_8$}
\psline[linewidth=2.5pt]{-}(9,8)(8,8) \uput[u](8.5,8){$p_9$}
\psline[linewidth=2.5pt]{-}(8,8)(6,8) \uput[u](6.5,8){$p_{10}$}
\psline[linewidth=2.5pt]{-}(6,8)(4,8) \uput[u](4.5,8){$p_{11}$}
\psline[linewidth=2.5pt]{-}(4,8)(2,8) \uput[u](2.5,8){$p_{12}$}

%vertical edges
\psline[linewidth=2.5pt]{-}(2,8)(2,6)
\pscurve{-}(2,6)(1.4,4)(2,2)
\psline{-}(2,4)(2,2)
\psline{-}(2,2)(2,1)
\pscurve{-}(4,8)(3.4,6)(4,4)
\psline[linewidth=2.5pt]{-}(4,6)(4,4)
\psline{-}(4,4)(4,1)
\psline{-}(6,8)(6,6)
\psline{-}(6,6)(6,3.02)
\psline{-}(8,8)(8,7)

%interior vertices
\psdots[dotstyle=*,linecolor=black](2,8)
\psdots[dotstyle=*,linecolor=black](2,6)
\psdots[dotsize=9pt 0,dotstyle=*,linecolor=black](2,4)
\psdots[dotstyle=*,linecolor=black](2,2)
\psdots[dotstyle=*,linecolor=black](4,4)
\psdots[dotsize=9pt 0,dotstyle=*,linecolor=black](4,6)
\psdots[dotstyle=*,linecolor=black](4,8)
\psdots[dotstyle=*,linecolor=black](6,8)
\psdots[dotstyle=*,linecolor=black](8,8)
\psdots[dotstyle=*,linecolor=black](6,6)
\psdots[dotstyle=*,linecolor=black](9,8)

%boundary vertices
\uput[r](9.08,8.12){\large{1}}
\psdots[dotstyle=*,linecolor=black](8,7)
\uput[r](8.08,7.12){\large{2}}
\psdots[dotstyle=*,linecolor=black](7,6)
\uput[r](7.08,6.12){\large{3}}
\psdots[dotstyle=*,linecolor=black](7,4)
\uput[r](7.08,4.12){\large{4}}
\psdots[dotstyle=*,linecolor=black](6,3)
\uput[r](6.08,3.12){\large{5}}
\psdots[dotstyle=*,linecolor=black](5,2)
\uput[r](5.08,2.12){\large{6}}
\psdots[dotstyle=*,linecolor=black](4,1)
\uput[r](4.08,1.12){\large{7}}
\psdots[dotstyle=*,linecolor=black](2,1)
\uput[r](2.08,1.12){\large{8}}

\end{pspicture*}\quad\begin{pspicture*}(0,0)(10,9)
% horizontal edges
\psline{-}(5,2)(2,2) \uput[u](2.5,2){$p_2$}
\psline{-}(7,4)(4,4) \uput[u](4.5,4){$p_4$}
\psline{-}(4,4)(2,4) \uput[u](2.5,4){$m_5$}
\psline{-}(7,6)(6,6) \uput[u](6.5,6){$p_6$}
\psline{-}(6,6)(4,6) \uput[u](4.5,6){$m_7$}
\pscurve[linewidth=2.5pt]{-}(6,6)(4,7)(2,6) \uput[u](2.5,6.2){$p_8$}
\psline[linewidth=2.5pt]{-}(9,8)(8,8) \uput[u](8.5,8){$p_9$}
\psline[linewidth=2.5pt]{-}(8,8)(6,8) \uput[u](6.5,8){$p_{10}$}
\psline[linewidth=2.5pt]{-}(6,8)(4,8) \uput[u](4.5,8){$p_{11}$}
\psline[linewidth=2.5pt]{-}(4,8)(2,8) \uput[u](2.5,8){$p_{12}$}

%vertical edges
\psline[linewidth=2.5pt]{-}(2,8)(2,6)
\pscurve{-}(2,6)(1.4,4)(2,2)
\psline{-}(2,4)(2,2)
\psline{-}(2,2)(2,1)
\pscurve{-}(4,8)(3.4,6)(4,4)
\psline{-}(4,6)(4,4)
\psline{-}(4,4)(4,1)
\psline{-}(6,8)(6,6)
\psline[linewidth=2.5pt]{-}(6,6)(6,3)
\psline{-}(8,8)(8,7)

%interior vertices
\psdots[dotstyle=*,linecolor=black](2,8)
\psdots[dotstyle=*,linecolor=black](2,6)
\psdots[dotsize=9pt 0,dotstyle=*,linecolor=black](2,4)
\psdots[dotstyle=*,linecolor=black](2,2)
\psdots[dotstyle=*,linecolor=black](4,4)
\psdots[dotsize=9pt 0,dotstyle=*,linecolor=black](4,6)
\psdots[dotstyle=*,linecolor=black](4,8)
\psdots[dotstyle=*,linecolor=black](6,8)
\psdots[dotstyle=*,linecolor=black](8,8)
\psdots[dotstyle=*,linecolor=black](6,6)
\psdots[dotstyle=*,linecolor=black](9,8)

%boundary vertices
\uput[r](9.08,8.12){\large{1}}
\psdots[dotstyle=*,linecolor=black](8,7)
\uput[r](8.08,7.12){\large{2}}
\psdots[dotstyle=*,linecolor=black](7,6)
\uput[r](7.08,6.12){\large{3}}
\psdots[dotstyle=*,linecolor=black](7,4)
\uput[r](7.08,4.12){\large{4}}
\psdots[dotstyle=*,linecolor=black](6,3.02)
\uput[r](6.08,3.12){\large{5}}
\psdots[dotstyle=*,linecolor=black](5,2)
\uput[r](5.08,2.12){\large{6}}
\psdots[dotstyle=*,linecolor=black](4,1)
\uput[r](4.08,1.12){\large{7}}
\psdots[dotstyle=*,linecolor=black](2,1)
\uput[r](2.08,1.12){\large{8}}

\end{pspicture*}
\end{center}
\caption{The two pseudopaths from 1 to 4 and the unique pseudopath from 1 to 5, indicated in bold. Note that the pseudopath in the middle figure
contains an east-west combination step.}
\label{fig:pseudopath}
\end{figure}
\end{example}

\begin{definition}
If $M$ is an MR-matrix, we will let
$\widetilde{M}$ denote the matrix obtained from $M$
by rescaling rows so that the leftmost nonzero entry in each row is $1$.
\end{definition}

\begin{definition}
If $D$ is a diagram contained in a $k \times (n-k)$ rectangle,
then we let
$i_1 <  i_2 < \dots < i_k$ denote the labels of the sources
in the corresponding network.
If $M$ and $\widetilde{M}$ are the corresponding $k \times n$
MR and rescaled MR-matrices
associated to $D$, then we will
index their rows by $i_1, \dots, i_k$ from top to bottom, and their
columns by $1, 2, \dots, n$ from left to right.
\end{definition}

\begin{theorem}\label{th:MR-matrix}
Let $D$ be a diagram contained in a $k \times (n-k)$ rectangle,
and let $\widetilde{M} = (\widetilde{M}_{st})$
be the corresponding $k \times n$ rescaled MR-matrix.
Then $$\widetilde{M}_{st} = \sum_{\widetilde{P}} w(\widetilde{P}),$$
where the sum is over all pseudopaths from
the source $s$ to the boundary vertex $t$ in the network .
\end{theorem}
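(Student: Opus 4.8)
The plan is to prove Theorem~\ref{th:MR-matrix} by induction on the number of nonempty rows of $\lambda$, peeling off the bottom row at each stage; this keeps the underlying shape a Young diagram.  If $\ell$ is the index of the bottom row of $\lambda$, let $D'$ be the restriction of $D$ to $\lambda' := \lambda \setminus (\text{row }\ell)$, and write the product defining the MR-matrix as $g = \gamma\, g'$, where $\gamma = g_1\cdots g_{\lambda_\ell}$ collects the factors coming from the boxes of row $\ell$ (read right to left, so that their indices are the consecutive simple reflections $s_a, s_{a+1},\dots, s_{a+\lambda_\ell-1}$) and $g' = g_{\lambda_\ell+1}\cdots g_m$ is exactly the product associated to $D'$.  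By the inductive hypothesis the rescaled MR-matrix of $D'$ records pseudopath sums in $N_{D'}$.  The base case is the empty diagram, where $g = 1$, the rescaled MR-matrix has a single $1$ in each row, and the only pseudopaths are the empty ones.

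For the inductive step one tracks two things at once.  First, how left-multiplication by $\gamma$ transforms $M_{D'}$ — since the columns of the MR-matrix correspond to the rows of the ambient $n\times n$ matrix, this amounts to a sequence of elementary column operations and column transpositions on $M_{D'}$, one per box of row $\ell$ — and then how the subsequent rescaling (dividing each row by its leftmost nonzero entry) affects the result.  Second, how the network $N_{D'}$ is modified into $N_D$ when the bottom row is reinstated: each $+$ or $\smallblack$ in row $\ell$ contributes one new internal vertex, carrying a horizontal edge running left to the next internal or boundary vertex (weighted $p_b$, respectively $m_b$) together with a vertical edge running down, and the southeast border is enlarged, introducing new boundary vertices.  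The three possible fillings of a box in row $\ell$ match the three factor types: a white stone gives $\dot s_i$ and contributes no internal vertex but transposes two adjacent wires; a $+$ gives $y_i(p_b)$ and grafts on a weighted horizontal edge; and a $\smallblack$ gives $x_i(m_b)\dot s_i^{-1}$, grafting a weighted horizontal edge and transposing two wires, and is responsible for a sign.  The rescaling is precisely what converts the honest monomial weight of a directed path — a product of the $p_b$'s and $m_b$'s on its edges — into the Laurent monomial of Definition~\ref{def:pseudoweight}: by Lemma~\ref{lem:Mproperties} the leftmost nonzero entry of row $i_\ell$ of $M_D$ equals $(-1)^b\prod p_i$ over the $+$-boxes of that row, so dividing by it turns each $p_b$ traversed ``westward'' into a factor $1/p_b$, leaves a factor $p_b$ only on an east step preceded by an east step, and leaves behind exactly the signs $(-1)^{b+w}$ coming from the stones the path passes over.

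The hard part will be the bookkeeping forced by the factors $\dot s_i$ — those coming from white stones, and those hidden inside the black-stone factors $x_i(m)\dot s_i^{-1}$.  Unlike the $y_i(p)$ factors, these do not simply graft a new weighted edge onto the network: they permute adjacent wires, which can change which boundary vertices are sources, can reroute an already-constructed (pseudo)path, and are the reason a pseudopath has the counterintuitive step sequence $WW^*S(EE^*S)^*E^*$ of Definition~\ref{def:pseudopath} and, in particular, the reason for the ``east-west combination steps'' (an east step onto a black stone's horizontal edge followed immediately by a west step back off it), which occur exactly when a path climbs the upward bridge produced by an $x_i(m)$ and then crosses.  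One must verify that under each of the three elementary moves the pseudopath conditions — no repeated edge, no two consecutive south steps, and the prescribed step sequence — are preserved exactly, with no spurious paths created by the non-planarity of $N_D$ (edges crossing without meeting at a vertex) and with all signs accounted for.  Because the within-row product $\gamma$ has a very rigid form (consecutive simple reflections, at most one bridge per wire), this verification reduces to a finite case check of how a path can enter the horizontal strip occupied by row $\ell$, travel through it, and leave it; carrying this out carefully, together with the rescaling computation above, completes the induction.
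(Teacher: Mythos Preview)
Your approach is genuinely different from the paper's and plausibly workable, but as written it is a plan rather than a proof, and the part you defer (``a finite case check'') hides exactly the subtlety that the paper's argument is designed to resolve.

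The paper does not induct on rows at all.  It first reformulates Theorem~\ref{th:MR-matrix} in terms of a \emph{modified network} $N'_D$ (Lemma~\ref{lem:equiv} shows this is equivalent), and then proves the reformulation by interpreting the full product $g=g_1\cdots g_m$ as a \emph{chip network}: each factor $y_i(p)$, $\dot s_i$, or $x_i(m)\dot s_i^{-1}$ becomes a local ``chip,'' their concatenation gives a planar wiring diagram whose route-weights are exactly the matrix entries of $g$ (Lemma~\ref{lem:chipnetwork}), and the heart of the proof is an explicit weight-preserving bijection between routes in this chip network and pseudopaths in $N'_D$.  The five cases in Figure~\ref{fig:bijection} are what make the pseudopath step grammar $WW^*S(EE^*S)^*E^*$, the east--west combination steps, and the sign $(-1)^{b+w}$ all fall out at once.

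Your language of ``transposing two adjacent wires'' is already chip-network language, not $N_D$-language, so you are implicitly reaching for the same picture.  The concrete gap in your induction is this: when you add back the bottom row, the vertical edges out of \emph{every} upper-row internal vertex in the affected columns change their targets (in $N_{D'}$ they terminate on the SE border of $\lambda'$, in $N_D$ they may now land on a $+$-vertex in the new row or continue past it), and the SE border itself is relabeled.  So a pseudopath in $N_D$ from an upper-row source to a lower boundary vertex does not simply restrict to a pseudopath in $N_{D'}$ followed by a local move in the new row; you must explain how the ``old'' terminal east-run splices onto the new row, how this interacts with east--west combination steps targeting black stones in any row, and why the global sign $(-1)^{b+w}$ (which counts stones skipped anywhere along the path) is preserved under this surgery.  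None of this is impossible, but it is precisely the bookkeeping that the paper's all-at-once chip-network bijection was introduced to avoid, and you have not carried it out.
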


Theorem~\ref{th:MR-matrix} will follow from Theorem~\ref{th2:MR-matrix} and Lemma~\ref{lem:equiv}.

\begin{example}
The MR-matrix $M$ from Example~\ref{ex:mr-matrix} corresponds to the
network from Figure~\ref{fig:pseudopath}.  The rows of $M$ are indexed by
$1, 3, 4, 6$ from top to bottom.  Note that after we rescale the rows of
$M$, obtaining $\widetilde{M}$, we have
$\widetilde{M}_{14} =
\frac{1}{p_9 p_{10} p_{11}}-\frac{m_7}{p_9 p_{10} p_{11} p_{12}}$, and $\widetilde{M}_{15} =
\frac{1}{p_9 p_{10} p_{11} p_{12}}$.
This agrees with our pseudopath computation
from Example \ref{ex:pseudo}.
\end{example}

Next we will give a formula for entries of MR-matrices, in terms of
pseudopaths in \emph{modified networks}.

\begin{definition}
Given a network $N_D$ with $k$ sources labeled $i_1, \dots, i_k$
and $n$ boundary vertices,
we obtain from it a corresponding \emph{modified network} $N'_D$,
by:
\begin{itemize}
\item adding $k$ new boundary vertices to the left of $N_D$, labeled $i'_1,\dots, i'_k$
from top to bottom;
\item adding a new horizontal edge which connects $i'_j$ to the
nearest vertex of the network to its right.
\end{itemize}
\end{definition}
See Figure \ref{fig:modified} for the modified network associated to the
network from Figure \ref{fig:pseudopath}.

\begin{definition}\label{def:pseudopathmodified}
A \emph{pseudopath} $P$ on a modified network is a path on the
modified network which:
\begin{itemize}
\item starts at one of the boundary vertices
labeled $i'_1,\dots, i'_k$, and ends
at one of the boundary vertices labeled $1,2,\dots, n$;
%\item does not cross the same edge twice;
\item takes a sequence of steps which has the form
$$(EE^*S)^*E^*.$$
\end{itemize}
\end{definition}
The arrows in Figure \ref{fig:modified} indicate the allowed directions
in which a path may travel.

The \emph{weight} of a pseudopath in a modified network is defined
the same way as the weight of a pseudopath in a network (see Definition
\ref{def:pseudoweight}).  Note that since a pseudopath in a modified network
does not contain steps west along edges weighted $p_i$, its weight is a monomial (not a Laurent monomial).

%%%%%%%%%%%%%%%%%%%% modified network for pseudopaths %%%%%%%%%%%%%%%%%%%%%%%%%%%%%
\begin{figure}[ht]
\begin{center}
\psset{xunit=0.6cm,yunit=0.6cm,dotstyle=o,dotsize=5pt 0,linewidth=0.8pt,arrowsize=3pt 2,arrowinset=0.25}
\begin{pspicture*}(-1,0)(10,9)

% horizontal edges --- REVERSE DIRECTION ON P EDGES
\psline{<-}(5,2)(2,2) \uput[u](2.5,2){$p_2$}
\psline{<-}(7,4)(4,4) \uput[u](4.5,4){$p_4$}
\psline{->}(4,4)(2,4) \uput[u](2.5,4){$m_5$}
\psline{<-}(7,6)(6,6) \uput[u](6.5,6){$p_6$}
\psline{->}(6,6)(4,6) \uput[u](4.5,6){$m_7$}
\pscurve{<-}(6,6)(4,7)(2,6) \uput[u](2.5,6.2){$p_8$}
\psline{<-}(9,8)(8,8) \uput[u](8.5,8){$p_9$}
\psline{<-}(8,8)(6,8) \uput[u](6.5,8){$p_{10}$}
\psline{<-}(6,8)(4,8) \uput[u](4.5,8){$p_{11}$}
\psline{<-}(4,8)(2,8) \uput[u](2.5,8){$p_{12}$}

%vertical edges
\psline{->}(2,8)(2,6)
\pscurve{->}(2,6)(1.4,4)(2,2)
\psline{->}(2,4)(2,2)
\psline{->}(2,2)(2,1)
\pscurve{->}(4,8)(3.4,6)(4,4)
\psline{->}(4,6)(4,4)
\psline{->}(4,4)(4,1)
\psline{->}(6,8)(6,6)
\psline{->}(6,6)(6,3.02)
\psline{->}(8,8)(8,7)

%interior vertices
\psdots[dotstyle=*,linecolor=black](2,8)
\psdots[dotstyle=*,linecolor=black](2,6)
\psdots[dotsize=9pt 0,dotstyle=*,linecolor=black](2,4)
\psdots[dotstyle=*,linecolor=black](2,2)
\psdots[dotstyle=*,linecolor=black](4,4)
\psdots[dotsize=9pt 0,dotstyle=*,linecolor=black](4,6)
\psdots[dotstyle=*,linecolor=black](4,8)
\psdots[dotstyle=*,linecolor=black](6,8)
\psdots[dotstyle=*,linecolor=black](8,8)
\psdots[dotstyle=*,linecolor=black](6,6)
\psdots[dotstyle=*,linecolor=black](9,8)

%boundary vertices

%boundary vertices
\uput[r](9.08,8.12){\large{1}}
\psdots[dotstyle=*,linecolor=black](8,7)
\uput[r](8.08,7.12){\large{2}}
\psdots[dotstyle=*,linecolor=black](7,6)
\uput[r](7.08,6.12){\large{3}}
\psdots[dotstyle=*,linecolor=black](7,4)
\uput[r](7.08,4.12){\large{4}}
\psdots[dotstyle=*,linecolor=black](6,3.02)
\uput[r](6.08,3.14){\large{5}}
\psdots[dotstyle=*,linecolor=black](5,2)
\uput[r](5.08,2.12){\large{6}}
\psdots[dotstyle=*,linecolor=black](4,1)
\uput[r](4.08,1.12){\large{7}}
\psdots[dotstyle=*,linecolor=black](2,1)
\uput[r](2.08,1.12){\large{8}}

%new boundary vertices
\psdots[dotstyle=*,linecolor=black](0,8)
\uput[l](0.08,8.12){\large{$1'$}}
\psdots[dotstyle=*,linecolor=black](0,6)
\uput[l](0.08,6.12){\large{$3'$}}
\psdots[dotstyle=*,linecolor=black](0,4)
\uput[l](0.08,4.12){\large{$4'$}}
\psdots[dotstyle=*,linecolor=black](0,2)
\uput[l](0.08,2.12){\large{$6'$}}

%new connecting edges
\psline{->}(0,8)(2,8)
\psline{->}(0,6)(2,6)
\psline{->}(0,4)(2,4)
\psline{->}(0,2)(2,2)

\end{pspicture*}
\end{center}
\caption{Example of a modified network.}
\label{fig:modified}
\end{figure}
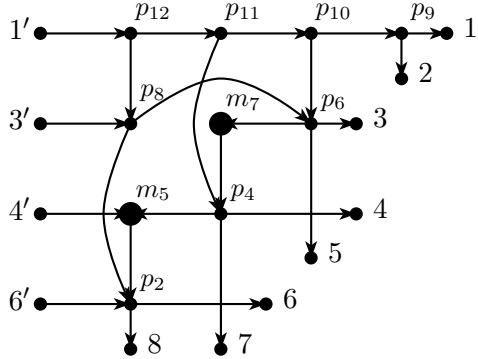

%%%%%%%%%%%%%%%%%%%%%%%%%%%%%% end modified network for pseudopaths %%%%%%%%%%%%%%%%%%%%%%%%

\begin{theorem}\label{th2:MR-matrix}
Let $D$ be a diagram contained in a $k \times (n-k)$ rectangle,
and let $M=(M_{st})$ be
the corresponding $k \times n$ MR-matrix.
Then $${M}_{st} = \sum_P w(P),$$
where the sum is over all pseudopaths in the modified network from
the boundary vertex  $s'$ to the boundary vertex $t$.
\end{theorem}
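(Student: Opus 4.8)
The plan is to prove Theorem~\ref{th2:MR-matrix} by induction on the number $m$ of boxes of $D$, peeling off the factor recorded by the first box of the standard reading order. The base case $m=0$ is immediate: $D$ and $N_D$ are empty, the MR-matrix has an identity block in columns $I(w)=\{n-k+1,\dots,n\}$, and the only pseudopath from $s'$ traverses the single new left-hand edge and stops at $s$, of weight $1$. For the inductive step, delete box $1$ --- the rightmost box of the bottom nonempty row of $\lambda$, which is always a removable corner --- to obtain a diagram $D'$ in the smaller Young diagram $\lambda'$ with $m-1$ boxes and reduced word $\w'=s_{i_2}\cdots s_{i_m}$. By construction the $n\times n$ MR-product for $D$ factors as $g=g_1\,g'$, where $g_1\in\{\dot s_{i_1},\ y_{i_1}(p_1),\ x_{i_1}(m_1)\dot s_{i_1}^{-1}\}$ is the factor recorded by box $1$ and $g'$ is the MR-product for $D'$.

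Because left multiplication commutes with extracting the relevant columns and projecting to the Grassmannian, one obtains $M^{\top}=g_1\,(M')^{\top}$, with columns matched up by position, so that the row of $M$ in position $a$ comes from the row of $M'$ in position $a$ (its label changing from $i'_a$ to $i_a$ for the single row affected by shrinking $\lambda$). Hence $M_{st}=\sum_u (g_1)_{tu}\,M'_{s^{\flat},u}$, where $s^{\flat}$ is the source of $D'$ in the same position as $s$. Each of the three possibilities for $g_1$ has at most two nonzero entries in any row --- the unipotent $1$'s on the diagonal, the off-diagonal $\pm1$ of $\dot s_{i_1}$, and the entries of the $2\times2$ block $\left(\begin{smallmatrix}-m_1&1\\-1&0\end{smallmatrix}\right)$ of $x_{i_1}(m_1)\dot s_{i_1}^{-1}$ --- so by the inductive hypothesis for $D'$ each entry $M_{st}$ is a sum of at most two weighted sums of pseudopaths in the modified network $N'_{D'}$.

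It then remains to compare $N'_D$ with $N'_{D'}$ locally and to exhibit a weight-preserving bijection. Deleting box $1$ removes, when that box carries a $+$ or a black stone, the rightmost internal vertex $v$ of the bottom row together with its rightward edge (weight $a_1$, resp.\ $c_1$) and downward edge (weight $1$), both running to boundary vertices; in every case it relabels the two boundary vertices flanking the end of that row --- exactly the transposition $s_{i_1}$ --- and, when box $1$ was alone in its row, it moves the attachment point of the new left-hand vertices $i'_1,\dots,i'_k$. One checks, case by case: for a \emph{white stone} ($g_1=\dot s_{i_1}$) no internal vertex is created and pseudopaths in $N'_D$ match those in $N'_{D'}$ under the relabeling, with the $-1$ of $\dot s_{i_1}$ accounting for vertical passage past the white stone at box $1$; for a \emph{plus} ($g_1=y_{i_1}(p_1)$) the new $+$-vertex $v$ is inserted, pseudopaths avoiding $v$ realize the diagonal part of $y_{i_1}(p_1)$ while those using the new horizontal and/or vertical edges of $v$ realize the entry $p_1$, collected on the first east step after the step into $v$; for a \emph{black stone} ($g_1=x_{i_1}(m_1)\dot s_{i_1}^{-1}$) the new \Black-vertex $v$ is inserted and the three nonzero entries $-m_1,\,1,\,-1$ of its $2\times2$ block correspond exactly to the three ways a pseudopath can meet $v$ --- turning at $v$, passing straight through horizontally, or executing the \emph{east--west combination step} of Definition~\ref{def:pseudopath} --- with the two $-1$'s feeding the sign $(-1)^{b+w}$ of Definition~\ref{def:pseudoweight}. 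In each case, sending a pseudopath in $N'_D$ to the pair consisting of its portion inside $N'_{D'}$ and the $g_1$-entry it used is the required bijection; the weight of the discarded initial portion equals that entry, so weights multiply, and summing together with $M^{\top}=g_1(M')^{\top}$ and the inductive hypothesis completes the proof.

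I expect the black-stone case to be the crux: one must verify that the east--west combination step, and nothing else, models the composite creation-and-permutation behaviour of $x_{i_1}(m_1)\dot s_{i_1}^{-1}$, and that the skipped-stone sign $(-1)^{b+w}$ is simultaneously consistent with the $-1$'s appearing in $\dot s_i$ and in the $2\times2$ block, and with the sign convention built into Definition~\ref{def:weightmatrix}. The second recurring difficulty is pure bookkeeping: tracking how the boundary-vertex labels and the index set $I(w)\rightsquigarrow I(w')$ change, and how the left-hand vertices $i'_j$ migrate, each time box $1$ empties a row, and keeping all of this consistent across the whole induction.
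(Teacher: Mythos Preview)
Your inductive approach is viable and genuinely different from the paper's argument.  The paper does not induct on boxes at all.  Instead it introduces an auxiliary object, a \emph{chip network} $C_g$: to each factor $y_i(p)$, $\dot s_i$, or $x_i(m)\dot s_i^{-1}$ one attaches a small ``chip'' with $n$ strands, and concatenating the chips in order gives a planar wiring diagram whose route-generating matrix is exactly $g$ (Lemma~\ref{lem:chipnetwork}).  The theorem is then proved by a single, global, weight-preserving bijection between pseudopaths in the modified network $N'_D$ and routes in $C_g$, described uniformly by classifying each step of a pseudopath into one of six types and reading off the corresponding chip-fragment.

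Your induction is in effect the same bijection unrolled one chip at a time: peeling off $g_1$ is concatenating one chip, and your ``three ways a pseudopath can meet $v$'' are exactly the three nontrivial routes through an $x_i$-chip.  What the chip-network packaging buys is that the sign bookkeeping becomes local and visible---each $-1$ lives on a specific strand of a specific chip and is picked up exactly when a route crosses it---so one never has to track how $(-1)^{b+w}$, the source set $I(w)$, and the left-hand attachment points evolve along an induction.  Your own final paragraph correctly identifies these as the two pressure points of the inductive route; the paper's device simply dissolves them.  If you want to complete your argument, it would be cleanest to recast it as building $C_g$ one chip at a time and observing that the route--pseudopath bijection is stable under adding a chip on the left; you will then have reproduced the paper's proof from a slightly different starting point.
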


\begin{lemma}\label{lem:equiv}
Theorems \ref{th:MR-matrix} and \ref{th2:MR-matrix} are equivalent.
\end{lemma}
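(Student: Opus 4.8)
The plan is to split Lemma~\ref{lem:equiv} into two independent comparisons. First, the rescaled MR-matrix $\widetilde M$ is obtained from $M$ by a diagonal rescaling of the rows whose entries we can read off from Lemma~\ref{lem:Mproperties}. Second, the pseudopaths in the network $N_D$ that appear in Theorem~\ref{th:MR-matrix} are in a weight-respecting bijection with the pseudopaths in the modified network $N'_D$ that appear in Theorem~\ref{th2:MR-matrix}. Together these two comparisons let each theorem be derived from the other.

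For the rescaling, I would use Lemma~\ref{lem:Mproperties}: the leftmost nonzero entry of the row of $M$ indexed by the source $i_\ell$ lies in column $i_\ell$ and equals $M_{i_\ell i_\ell} = (-1)^{b_\ell}\prod p_i$, where $b_\ell$ is the number of black stones in row $i_\ell$ of $D$ and the product runs over the boxes of that row carrying a $+$, i.e.\ over the $p$-weighted horizontal edges of $N_D$ in the row of $i_\ell$. Hence $\widetilde M_{st} = M_{st}/M_{ss}$, so Theorems~\ref{th:MR-matrix} and~\ref{th2:MR-matrix} are equivalent once we know that $\sum_{\widetilde P} w(\widetilde P) = M_{ss}^{-1}\sum_P w(P)$, the first sum over pseudopaths $\widetilde P\colon s\to t$ in $N_D$ and the second over pseudopaths $P\colon s'\to t$ in $N'_D$.

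The bijection I would use is ``unfolding the initial westward run.'' A nonempty pseudopath $\widetilde P$ from the source $s$ starts with a maximal run of west steps along the row of $s$, reaching some internal vertex $v$, and then follows a tail of shape $S(EE^*S)^*E^*$ from $v$ to $t$. Send $\widetilde P$ to the pseudopath $P$ of $N'_D$ that enters along the new edge $s'\to v_0$ (where $v_0$ is the leftmost internal vertex of the row of $s$), runs east through the row of $s$ from $v_0$ to $v$ --- using east-west combination steps to pass any black stones, so the step sequence acquires the shape $(EE^*S)^*E^*$ --- and then repeats the same tail; send the empty pseudopath at $s$ to the pseudopath running east along the entire row of $s$ from $v_0$ to the boundary vertex $s$. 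This is reversible, so it is a bijection. For the weights: the tails match; on the initial segment, the $p$-weighted edges of the row of $s$ are partitioned into those traversed west by $\widetilde P$ (each contributing $p_i^{-1}$) and those traversed east by $P$ (each contributing $p_i$, being preceded by an east step), so the net $p$-contribution of $P$ relative to $\widetilde P$ is $\prod p_i$ over all $p$-edges of the row; the $m$-weights cancel (an $m_j$-edge of the row is crossed west by $\widetilde P$ exactly when $P$ uses a combination step there); and the leftover signs from black and white stones skipped over combine to the factor $(-1)^{b_\ell}$. Thus $w(P) = M_{ss}\, w(\widetilde P)$, which is the required identity.

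The main obstacle will be this last step: carefully tracking the $(-1)^{b+w}$ of Definition~\ref{def:pseudoweight} as a path is rerouted through the extra left column, and checking that the east-west combination steps of Definition~\ref{def:pseudopath} interact correctly with the black stones of the row being traversed, in both $N_D$ and $N'_D$. Two lesser points to handle are that the portion of Lemma~\ref{lem:Mproperties} we use should be valid for arbitrary diagrams, not just Go-diagrams, and that the degenerate pseudopaths --- the empty one, and those confined to a single row --- must be matched correctly by the bijection.
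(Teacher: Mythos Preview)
Your proposal is correct and follows essentially the same approach as the paper's own proof. The paper's argument is exactly the two-step comparison you outline: an ``obvious'' bijection between pseudopaths $\widetilde P:s\to t$ in $N_D$ and pseudopaths $P:s'\to t$ in $N'_D$ whose weights differ by the factor $(-1)^{b}\prod p_i$ taken over row~$s$, together with the identification of that factor with the leading entry $M_{ss}$ via Lemma~\ref{lem:Mproperties}. Your explicit description of the bijection as ``unfolding the initial westward run,'' and your itemized accounting of the $p$-, $m$-, and sign contributions, simply make precise what the paper summarizes in one sentence.
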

\begin{proof}
There is an obvious bijection between pseudopaths in a network
starting at boundary vertex $s$, and
pseudopaths in the corresponding modified network starting at boundary
vertex $s'$.  The weights of the corresponding pseudopaths are the same
except for a factor of $(-1)^b \prod p_i$, where
$b$ is the number of
\raisebox{0.12cm}{\hskip0.15cm\circle*{4}}-vertices in row $s$ of the network,
and the $p_i$'s range over
all edge weights in row $s$.

On the other hand,  Lemma \ref{lem:Mproperties}
%the arguments of \cite[Section 5]{KW2}
implies that the leftmost nonzero entry of row $s$ of the MR-matrix $M$
is precisely the quantity $(-1)^b \prod {p_i}$ above.
Therefore
Theorems \ref{th:MR-matrix} and \ref{th2:MR-matrix} are equivalent.
\end{proof}

By Lemma \ref{lem:equiv}, in order to prove Theorem
\ref{th:MR-matrix}, it suffices to prove
Theorem \ref{th2:MR-matrix}.
Our strategy for proving Theorem \ref{th2:MR-matrix} will be to interpret entries of the MR-matrix
in terms of paths in a chip network, and then construct a weight-preserving
bijection between these paths and between pseudopaths in the modified network.

\begin{definition}
A \emph{chip} is one of the three configurations shown in Figure
\ref{fig:chip}.  We call the three configurations
\emph{$y_i(p)$}- or \emph{$y_i$-chips},
\emph{$s_i$-chips}, and
\emph{$x_i(m)$}- or \emph{$x_i$-chips}, respectively.
\end{definition}

\begin{figure}[ht]
\begin{center}
\psset{unit=.6cm,dotstyle=o,dotsize=5pt 0,linewidth=0.8pt,arrowsize=3pt 2,arrowinset=0.25}
\begin{pspicture*}(-0.5,-2)(6,8)
% horizontal lines
\psline{-}(1,7)(5,7) \uput[l](1,7){$1$}
\psline{-}(1,6)(5,6) \uput[l](1,6){$2$}
\uput[l](3.25,5){$\vdots$}
\psline{-}(1,4)(5,4) \uput[l](1,4){$i$}
\psline{-}(1,3)(5,3) \uput[l](1,3){$i+1$}
\psline{-}(2.5,4)(3.5,3)\uput[l](3,3.5){$p$}
\uput[l](3.25,2){$\vdots$}
\psline{-}(1,1)(5,1) \uput[l](1,1){$n$}
\uput[l](3.75,0){$y_i(p)$}
\uput[l](4.25,-1){``$y_i$-chip''}

\end{pspicture*}\quad\begin{pspicture*}(-0.5,-2)(6,8)
% horizontal lines
\psline{-}(1,7)(5,7) \uput[l](1,7){$1$}
\psline{-}(1,6)(5,6) \uput[l](1,6){$2$}
\uput[l](3.25,5){$\vdots$}
\uput[l](1,4){$i$}
\uput[l](1,3){$i+1$}
\uput[u](2,2.8){$-1$}
\pscurve{-}(1,4)(2.25,4)(2.5,3.95)(3,3.5)(3.5,3.05)(3.75,3)(5,3)
\pscurve{-}(1,3)(2.25,3)(2.5,3.05)(3,3.5)(3.5,3.95)(3.75,4)(5,4)
\uput[l](3.25,2){$\vdots$}
\psline{-}(1,1)(5,1) \uput[l](1,1){$n$}
\uput[l](3.5,0){$\dot s_i$}
\uput[l](4.25,-1){``$s_i$-chip''}

\end{pspicture*}\quad\begin{pspicture*}(-0.5,-2)(5,8)
% horizontal lines
\psline{-}(1,7)(5,7) \uput[l](1,7){$1$}
\psline{-}(1,6)(5,6) \uput[l](1,6){$2$}
\uput[l](3.25,5){$\vdots$}
\uput[l](1,4){$i$}
\uput[l](1,3){$i+1$}
\uput[u](4.5,2.8){$-1$}
\pscurve{-}(1,4)(2.75,4)(3,3.95)(3.5,3.5)(4,3.05)(4.25,3)(5,3)
\pscurve{-}(1,3)(2.75,3)(3,3.05)(3.5,3.5)(4,3.95)(4.25,4)(5,4)
\psline{-}(1.5,3)(2.5,4) \uput[l](2,3.5){$m$}
\uput[l](3.25,2){$\vdots$}
\psline{-}(1,1)(5,1) \uput[l](1,1){$n$}
\uput[l](4.3,0){$x_i(m)\dot s_i^{-1}$}
\uput[l](4.25,-1){``$x_i$-chip''}

\end{pspicture*}
\end{center}
\caption{The three types of chips. }
\label{fig:chip}
\end{figure}
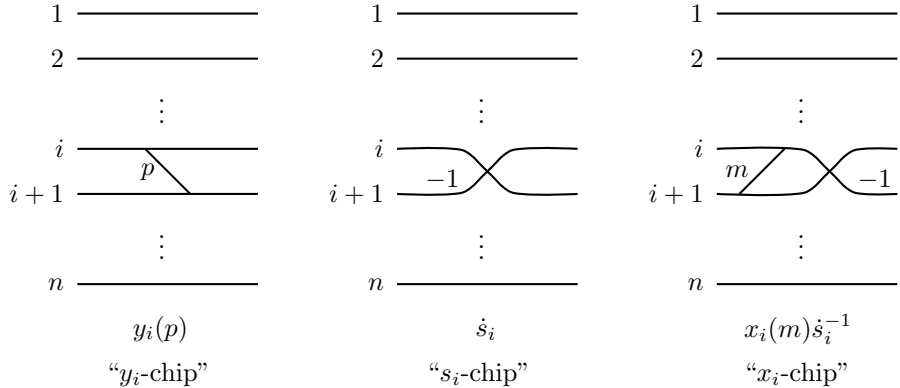

\begin{definition}
A \emph{chip network} is a concatenation of chips. Note that
it has $n$ boundary vertices at the left and $n$ boundary vertices
at the right.
Let $g$ be any
product of factors of the form
$y_i(p)$, $\dot s_i$, and $x_i(m) \dot s_i^{-1}$.
We associate a chip network $C_g$ to $g$
by concatenating the chips corresponding to the factors of $g$ in the order given by the factorization.
\end{definition}

\begin{example}
The chip network $C_g$ associated to the product $g$ from
\eqref{eq:MR-product} is shown in Figure \ref{fig:chipnetwork}.
\end{example}
\begin{figure}[ht]
\begin{center}
\psset{unit=.6cm,dotstyle=o,dotsize=5pt 0,linewidth=0.8pt,arrowsize=3pt 2,arrowinset=0.25}
\begin{pspicture*}(-1.5,-0.5)(21.5,8.5)

\psline{-}(-0.5,8)(20.5,8) \uput[l](-0.5,8){$1$}\uput[r](20.5,8){$1$}
\psline{-}(-0.5,7)(20.5,7) \uput[l](-0.5,7){$2$}\uput[r](20.5,7){$2$}
\psline{-}(-0.5,6)(20.5,6) \uput[l](-0.5,6){$3$}\uput[r](20.5,6){$3$}
\psline{-}(-0.5,5)(3,5)\pscurve{-}(3,5)(3.25,4.95)(3.75, 4.5)(4.25,4.05)(4.5,4)\psline{-}(4.5,4)(11,4)\pscurve{-}(11,4)(11.25,4.05)(11.75, 4.5)(12.25,4.95)(12.5,5)\psline{-}(12.5,5)(20.5,5) \uput[l](-0.5,5){$4$}\uput[r](20.5,5){$4$}
\psline{-}(-0.5,4)(3,4)\pscurve{-}(3,4)(3.25,4.05)(3.75, 4.5)(4.25,4.95)(4.5,5)\psline{-}(4.5,5)(11,5)\pscurve{-}(11,5)(11.25,4.95)(11.75, 4.5)(12.25,4.05)(12.5,4)\psline{-}(12.5,4)(20.5,4) \uput[l](-0.5,4){$5$}\uput[r](20.5,4){$5$}
\psline{-}(-0.5,3)(0,3)\pscurve{-}(0,3)(0.25,2.95)(0.75, 2.5)(1.25,2.05)(1.5,2)\psline{-}(1.5,2)(7.25,2)\pscurve{-}(7.25,2)(7.5,2.05)(8, 2.5)(8.5,2.95)(8.75,3)\psline{-}(8.75,3)(20.5,3) \uput[l](-0.5,3){$6$}\uput[r](20.5,3){$6$}
\psline{-}(-0.5,2)(0,2)\pscurve{-}(0,2)(0.25,2.05)(0.75, 2.5)(1.25,2.95)(1.5,3)\psline{-}(1.5,3)(7.25,3)\pscurve{-}(7.25,3)(7.5,2.95)(8, 2.5)(8.5,2.05)(8.75,2)\psline{-}(8.75,2)(20.5,2) \uput[l](-0.5,2){$7$}\uput[r](20.5,2){$7$}
\psline{-}(-0.5,1)(20.5,1) \uput[l](-0.5,1){$8$}\uput[r](20.5,1){$8$}

\uput[u](3,3.8){\footnotesize$-1$}
\uput[u](12.5,3.8){\footnotesize$-1$}
\uput[u](0,1.8){\footnotesize$-1$}
\uput[u](8.75,1.8){\footnotesize$-1$}

\psline{-}(2,2)(3,1)\uput[l](2.5,1.5){$p_2$}
\psline{-}(5,4)(6,3)\uput[l](5.5,3.5){$p_4$}
\psline{-}(8.75,6)(9.75,5)\uput[l](9.25,5.5){$p_6$}
\psline{-}(13,4)(14,3)\uput[l](13.5,3.5){$p_8$}
\psline{-}(14,8)(15,7)\uput[l](14.5,7.5){$p_9$}
\psline{-}(15.5,7)(16.5,6)\uput[l](16,6.5){$p_{10}$}
\psline{-}(17,6)(18,5)\uput[l](17.5,5.5){$p_{11}$}
\psline{-}(18.5,5)(19.5,4)\uput[l](19,4.5){$p_{12}$}

\psline{-}(6,2)(7,3) \uput[l](6.5,2.5){$m_5$}
\psline{-}(9.75,4)(10.75,5) \uput[l](10.25,4.5){$m_7$}

\end{pspicture*}
\end{center}
\caption{The chip network corresponding to the product
\[g=\dot s_6 y_7(p_2)\dot s_4 y_5(p_4) x_6(m_5) \dot s_6^{-1} y_3(p_6) x_4(m_7) \dot s_4^{-1} y_5(p_8) y_1(p_9) y_2(p_{10}) y_3(p_{11}) y_4(p_{12}).\]}
\label{fig:chipnetwork}
\end{figure}

\begin{definition}\label{def:route}
A \emph{route} $Q$ in a chip network is a path in the network
whose steps all travel east (or southeast or northeast
for slanted edges).  The \emph{weight} $w(Q)$ of such a route
is the product of all weights on its edges.
To each chip network $C$ we associate
a \emph{weight matrix} $x(C) = x_{ij}$ where
$x_{ij} = \sum_{Q} w(Q)$ and the sum is over all routes
from the boundary vertex $i$ at the west to the boundary vertex $j$
at the east.
\end{definition}

It is simple to verify the following result.  Recall our convention
from Section \ref{sec:flag} that the rows of $g$ are labeled from
bottom to top, and the columns of $g$ are labeled from right to left.

\begin{lemma}\label{lem:chipnetwork}
Let $g$ be a
product of factors of the form
$y_i(p)$, $\dot s_i$, and $x_i(m) \dot s_i^{-1}$.
Then the weight matrix $x(C_g)$ of the chip network $C_g$ associated
to $g$ coincides with the matrix $g$.
\end{lemma}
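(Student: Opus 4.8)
The plan is to deduce the statement from two elementary facts: chip-network weight matrices multiply under concatenation, and each of the three individual chips realizes the corresponding group element. First I would establish the multiplicativity. Suppose $C = C' * C''$ is the concatenation of two chip networks, obtained by identifying the $n$ east boundary vertices of $C'$ with the $n$ west boundary vertices of $C''$. Since every route travels strictly eastward, a route $Q$ in $C$ from a west vertex $i$ to an east vertex $j$ meets the shared interface in exactly one vertex, say at height $\ell$, and factors uniquely as $Q = Q'Q''$ with $Q'$ a route of $C'$ from $i$ to $\ell$ and $Q''$ a route of $C''$ from $\ell$ to $j$; conversely any such pair glues back together. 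As $w(Q) = w(Q')\,w(Q'')$, summing over routes gives $x(C)_{ij} = \sum_{\ell} x(C')_{i\ell}\,x(C'')_{\ell j}$, i.e. $x(C' * C'') = x(C')\,x(C'')$ (with the labeling conventions of Section~\ref{sec:flag}, under which the order of concatenation matches the order of the factors in $g$).

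Next I would dispatch the three base cases, checking that for a single $y_i(p)$-chip, $s_i$-chip, or $x_i(m)\dot s_i^{-1}$-chip, the matrix $x(C)$ equals $y_i(p)$, $\dot s_i$, respectively $x_i(m)\dot s_i^{-1} = \phi_i\left(\begin{smallmatrix}-m & 1\\ -1 & 0\end{smallmatrix}\right)$. In each chip every route other than the obvious ``straight across'' wires is confined to wires $i$ and $i+1$, so $x(C)$ differs from the identity only in the $2\times 2$ block in rows and columns $\{i,i+1\}$, and that block is read off by listing the (at most two) routes between each pair of those boundary vertices: for the $y_i(p)$-chip the single extra route runs down the slanted edge from wire $i$ to wire $i+1$ with weight $p$; for the $s_i$-chip wires $i$ and $i+1$ simply cross, producing the off-diagonal entries $1$ and $-1$ (the $-1$ supplied by the marked edge); and for the $x_i(m)\dot s_i^{-1}$-chip the crossing together with the weight-$m$ slanted edge and the marked $-1$ yield exactly the block above.

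Combining the two steps finishes the proof: writing $g = g_1 g_2 \cdots g_m$ as in the definition of a chip network, $C_g$ is by construction the concatenation $C_{g_1} * C_{g_2} * \cdots * C_{g_m}$, so $x(C_g) = x(C_{g_1}) \cdots x(C_{g_m}) = g_1 \cdots g_m = g$. The only genuine care required is in the second step, where one must match the marked $-1$'s and the orientations of the slanted edges against the convention that the rows of $g$ are indexed from bottom to top and its columns from right to left; but this is a finite check with no cancellations to track, since within a single chip there are never more than two routes between a given pair of boundary vertices, so I do not expect any real obstacle -- consistent with the text's assertion that the result is simple to verify.
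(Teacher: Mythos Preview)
Your proposal is correct. The paper does not actually supply a proof of this lemma; it simply asserts that ``It is simple to verify the following result'' and then states Lemma~\ref{lem:chipnetwork}. Your argument (multiplicativity of weight matrices under concatenation, plus a direct check of the three single-chip cases) is exactly the routine verification the authors are gesturing at, and the details you give for the base cases and for the convention-matching are accurate.
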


We now prove Theorem \ref{th2:MR-matrix}.

\begin{proof}
Let $D$ be a diagram contained in a $k \times (n-k)$ rectangle, and
$N'_D$ the corresponding modified network.  Let
$i'_1 < \dots < i'_k$ be the labels of the sources of $N'_D$.
Let $g$ be the product of factors of the form
$y_i(p)$, $\dot s_i$, and $x_i(m) \dot s_i^{-1}$ which is
encoded by $D$, and let
$M$ be the corresponding MR-matrix, whose
rows are indexed from top to bottom by  $i_1, \dots, i_k$.
Recall that the projection from $g$ to $M$ switches rows and columns,
and the columns labeled $n-k+1, n-k+2, \dots, n$ in $g$
become rows labeled $i_1, i_2, \dots, i_k$ in $M$.
Therefore to prove Theorem \ref{th2:MR-matrix}, it suffices
to prove that for all $1 \leq t \leq n$ and $1 \leq s \leq k$
we have
\begin{equation}\label{suffice:1}
g_{t, s+(n-k)} = \sum_P w(P),
\end{equation}
where the sum
is over all pseudopaths $P$ from
$i'_s$ to $t$ in the modified network.

By Lemma \ref{lem:chipnetwork}, the matrix $g$ coincides with
the weight matrix $x(C_g) = (x_{st})$ of the chip network associated
to $g$. Therefore  by \eqref{suffice:1} it suffices to prove
that for all $1 \leq t \leq n$ and $1 \leq s \leq k$
we have
\begin{equation}\label{suffice:2}
x_{t, s+(n-k)} = \sum_P w(P),
\end{equation}
where the sum
is over all pseudopaths $P$ from
$i'_s$ to $t$ in the modified network.

Recall from Definition \ref{def:route} that $x_{ij} = \sum_{Q} w(Q)$, where
the sum is over all routes $Q$ in the chip network
from the boundary vertex
$i$ at the west to the boundary vertex $j$ at the east.
To prove \eqref{suffice:2}, we will give a weight-preserving
bijection between pseudopaths $P$ in the modified network from
$i'_s$ to $t$, and routes $Q$ in the chip network
from the boundary vertex $t$ at the west to the boundary
vertex $s+(n-k)$ at the east.
More specifically,
given a pseudopath $P$, we will examine its sequence of steps from
source to sink,
and explain how to build the corresponding
route $Q$ in the chip network.  As illustrated in Figure~\ref{fig:bijection},
each step in a pseudopath corresponds to a portion of a route
in a chip network.
(Note that our bijection will build the route in the chip network
from east to west, rather than west to east.)

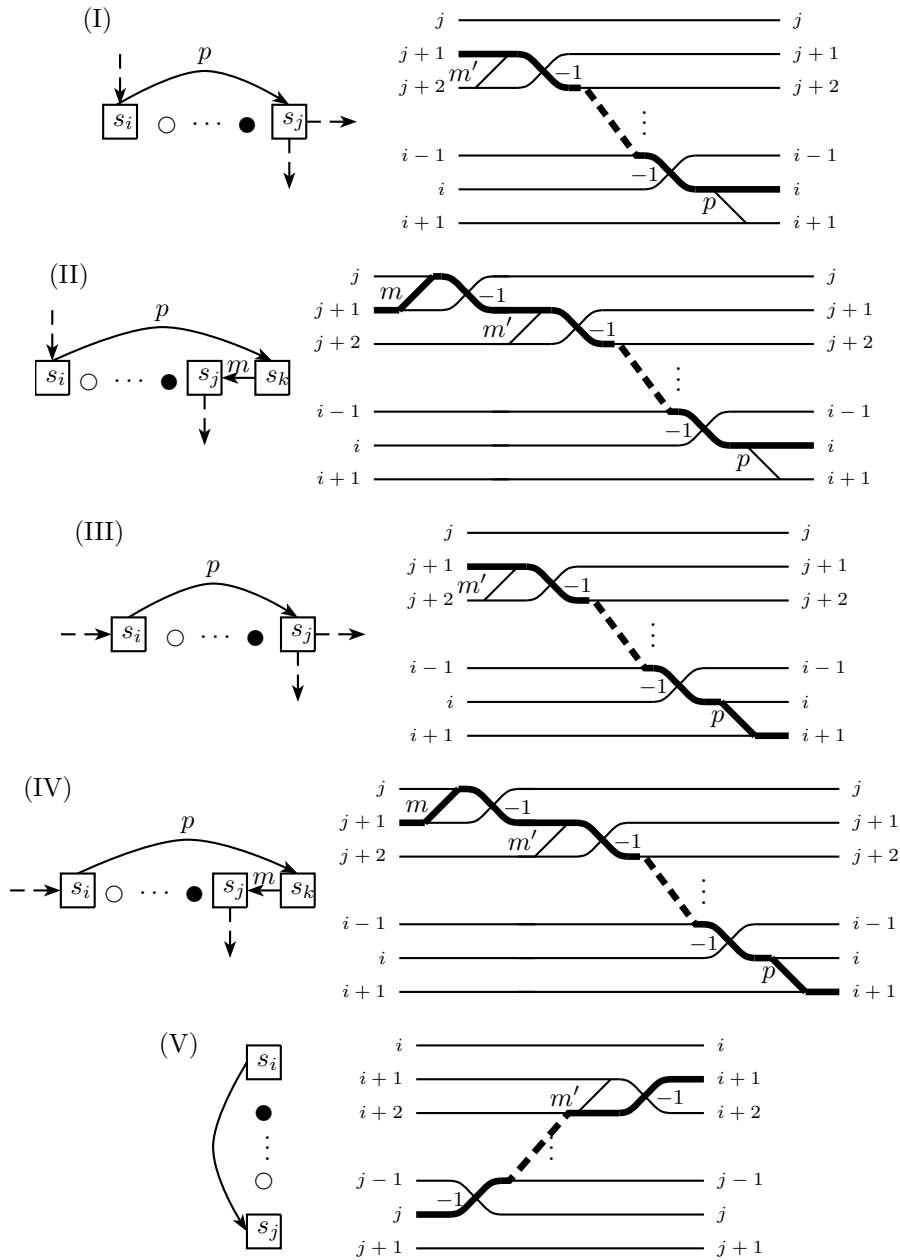
\begin{figure}[ht]
\begin{center}
\psset{unit=.45cm,dotstyle=o,dotsize=5pt 0,linewidth=0.8pt,arrowsize=3pt 2,arrowinset=0.25}
\begin{pspicture*}(-8,0)(15,7.5)
%%%%%%%% EAST, FROM SOUTH, TO NOT WEST
\uput[r](-8,7){(I)}
%%%%%%%% pseudopath picture
\psline{-}(-1,3.5)(-2,3.5)(-2,4.5)(-1,4.5)(-1,3.5) \uput[r](-2.1,4){$s_j$}
\psline{-}(-6,3.5)(-7,3.5)(-7,4.5)(-6,4.5)(-6,3.5) \uput[r](-7.1,4){$s_i$}
\pscurve{->}(-6.6,4.5)(-4,5.5)(-1.5,4.5)\uput[u](-4,5.3){$p$}

\uput[u](-3.7,3.5){\circle{0.5} $\hdots\hspace{6pt}$ \circle*{0.5}}

\psline[linestyle=dashed]{->}(-6.5,6)(-6.5,4.5)
\psline[linestyle=dashed]{->}(-1,4)(0.5,4)
\psline[linestyle=dashed]{->}(-1.5,3.5)(-1.5,2)

%%%%%%%% east step chips
\psline{-}(3.5,7)(13,7)
\psline[linewidth=2.5pt]{-}(3.5,6)(5.25,6)\pscurve[linewidth=2.5pt]{-}(5.25,6)(5.5,5.95)(6, 5.5)(6.5,5.05)(6.75,5)\psline{-}(6.75,5)(13,5)
\psline[linewidth=2.5pt,linestyle=dashed]{-}(6.75,5)(7.25,5)(8.75,3)(9,3)
\psline{-}(4,5)(5,6) \uput[l](4.5,5.5){$m'$}
\psline{-}(3.5,5)(5.25,5)\pscurve{-}(5.25,5)(5.5,5.05)(6, 5.5)(6.5,5.95)(6.75,6)\psline{-}(6.75,6)(13,6)
\uput[u](6.75,4.8){\footnotesize{$-1$}}
\uput[l](9.5,4.2){$\vdots$}
\psline{-}(3.5,3)(9,3)\pscurve[linewidth=2.5pt]{-}(9,3)(9.25,2.95)(9.75, 2.5)(10.25,2.05)(10.5,2)\psline[linewidth=2.5pt]{-}(10.5,2)(13,2)
\psline{-}(3.5,2)(9,2)\pscurve{-}(9,2)(9.25,2.05)(9.75, 2.5)(10.25,2.95)(10.5,3)\psline{-}(10.5,3)(13,3)
\uput[u](9,1.8){\footnotesize{$-1$}}
\psline{-}(3.5,1)(13,1)
\psline{-}(11,2)(12,1) \uput[l](11.5,1.5){$p$}

%%%%%%%% left string labels
\uput[l](3.5,7){\scriptsize$j$}
\uput[l](3.5,6){\scriptsize$j+1$}
\uput[l](3.5,5){\scriptsize$j+2$}
\uput[l](3.5,3){\scriptsize$i-1$}
\uput[l](3.5,2){\scriptsize$i$}
\uput[l](3.5,1){\scriptsize$i+1$}

%%%%%%%% right string labels
\uput[r](13,7){\scriptsize$j$}
\uput[r](13,6){\scriptsize$j+1$}
\uput[r](13,5){\scriptsize$j+2$}
\uput[r](13,3){\scriptsize$i-1$}
\uput[r](13,2){\scriptsize$i$}
\uput[r](13,1){\scriptsize$i+1$}

\end{pspicture*}

\begin{pspicture*}(-10,0)(15,7.5)
%%%%%%%% EAST, FROM SOUTH, TO WEST
\uput[r](-10,7){(II)}
%%%%%%%% pseudopath picture
\psline{-}(-2.5,3.5)(-3.5,3.5)(-3.5,4.5)(-2.5,4.5)(-2.5,3.5) \uput[r](-3.6,4){$s_k$}
\psline{-}(-9,3.5)(-10,3.5)(-10,4.5)(-9,4.5)(-9,3.5) \uput[r](-10.1,4){$s_i$}
\psline{-}(-4.5,3.5)(-5.5,3.5)(-5.5,4.5)(-4.5,4.5)(-4.5,3.5) \uput[r](-5.6,4){$s_j$}

\pscurve{->}(-9.5,4.5)(-6.25,5.5)(-3,4.5) \uput[u](-6.25,5.3){$p$}
\psline{->}(-3.5,4)(-4.5,4) \uput[u](-4,3.8){$m$}
\uput[u](-7,3.5){\circle{0.5} $\hdots\hspace{6pt}$ \circle*{0.5}}

\psline[linestyle=dashed]{->}(-9.5,6)(-9.5,4.5)
\psline[linestyle=dashed]{->}(-5,3.5)(-5,2)

%%%%%%%% west step x-chip
\psline{-}(0,7)(1.75,7)\psline[linewidth=2.5pt]{-}(1.75,7)(2,7)\pscurve[linewidth=2.5pt]{-}(2,7)(2.25,6.95)(2.75, 6.5)(3.25,6.05)(3.5,6)\psline[linewidth=2.5pt]{-}(3.5,6)(4,6)
\psline[linewidth=2.5pt]{-}(0,6)(0.75,6)\psline{-}(0.75,6)(2,6)\pscurve{-}(2,6)(2.25,6.05)(2.75, 6.5)(3.25,6.95)(3.5,7)\psline{-}(3.5,7)(4,7)
\psline[linewidth=2.5pt]{-}(0.75,6)(1.75,7) \uput[l](1.25,6.5){$m$}
\psline{-}(0,5)(4,5)
\uput[u](3.5,5.8){\footnotesize{$-1$}}
\psline{-}(0,3)(4,3)
\psline{-}(0,2)(4,2)
\psline{-}(0,1)(4,1)

%%%%%%%% east step chips
\psline{-}(3.5,7)(13,7)
\psline[linewidth=2.5pt]{-}(3.5,6)(5.25,6)\pscurve[linewidth=2.5pt]{-}(5.25,6)(5.5,5.95)(6, 5.5)(6.5,5.05)(6.75,5)\psline{-}(6.75,5)(13,5)
\psline[linewidth=2.5pt,linestyle=dashed]{-}(6.75,5)(7.25,5)(8.75,3)(9,3)
\psline{-}(4,5)(5,6) \uput[l](4.5,5.5){$m'$}
\psline{-}(3.5,5)(5.25,5)\pscurve{-}(5.25,5)(5.5,5.05)(6, 5.5)(6.5,5.95)(6.75,6)\psline{-}(6.75,6)(13,6)
\uput[u](6.75,4.8){\footnotesize{$-1$}}
\uput[l](9.5,4.2){$\vdots$}
\psline{-}(3.5,3)(9,3)\pscurve[linewidth=2.5pt]{-}(9,3)(9.25,2.95)(9.75, 2.5)(10.25,2.05)(10.5,2)\psline[linewidth=2.5pt]{-}(10.5,2)(13,2)
\psline{-}(3.5,2)(9,2)\pscurve{-}(9,2)(9.25,2.05)(9.75, 2.5)(10.25,2.95)(10.5,3)\psline{-}(10.5,3)(13,3)
\uput[u](9,1.8){\footnotesize{$-1$}}
\psline{-}(3.5,1)(13,1)
\psline{-}(11,2)(12,1) \uput[l](11.5,1.5){$p$}

%%%%%%%% left string labels
\uput[l](0,7){\scriptsize$j$}
\uput[l](0,6){\scriptsize$j+1$}
\uput[l](0,5){\scriptsize$j+2$}
\uput[l](0,3){\scriptsize$i-1$}
\uput[l](0,2){\scriptsize$i$}
\uput[l](0,1){\scriptsize$i+1$}

%%%%%%%% right string labels
\uput[r](13,7){\scriptsize$j$}
\uput[r](13,6){\scriptsize$j+1$}
\uput[r](13,5){\scriptsize$j+2$}
\uput[r](13,3){\scriptsize$i-1$}
\uput[r](13,2){\scriptsize$i$}
\uput[r](13,1){\scriptsize$i+1$}

\end{pspicture*}

\begin{pspicture*}(-8.5,0)(15,7.5)
%%%%%%%% EAST, FROM EAST, TO NOT WEST
\uput[r](-8.5,7){(III)}
%%%%%%%% pseudopath picture
\psline{-}(-1,3.5)(-2,3.5)(-2,4.5)(-1,4.5)(-1,3.5)\uput[r](-2.1,4){$s_j$}
\psline{-}(-6,3.5)(-7,3.5)(-7,4.5)(-6,4.5)(-6,3.5)\uput[r](-7.1,4){$s_i$}
\pscurve{->}(-6.5,4.5)(-4,5.5)(-1.5,4.5)\uput[u](-4,5.3){$p$}

\uput[u](-3.7,3.5){\circle{0.5} $\hdots\hspace{6pt}$ \circle*{0.5}}

\psline[linestyle=dashed]{->}(-8.5,4)(-7,4)
\psline[linestyle=dashed]{->}(-1,4)(0.5,4)
\psline[linestyle=dashed]{->}(-1.5,3.5)(-1.5,2)

%%%%%%%% east step chips
\psline{-}(3.5,7)(13,7)
\psline[linewidth=2.5pt]{-}(3.5,6)(5.25,6)\pscurve[linewidth=2.5pt]{-}(5.25,6)(5.5,5.95)(6, 5.5)(6.5,5.05)(6.75,5)\psline{-}(6.75,5)(13,5)
\psline[linewidth=2.5pt,linestyle=dashed]{-}(6.75,5)(7.25,5)(8.75,3)(9,3)
\psline{-}(4,5)(5,6) \uput[l](4.5,5.5){$m'$}
\psline{-}(3.5,5)(5.25,5)\pscurve{-}(5.25,5)(5.5,5.05)(6, 5.5)(6.5,5.95)(6.75,6)\psline{-}(6.75,6)(13,6)
\uput[u](6.75,4.8){\footnotesize{$-1$}}
\uput[l](9.5,4.2){$\vdots$}
\psline{-}(3.5,3)(9,3)\pscurve[linewidth=2.5pt]{-}(9,3)(9.25,2.95)(9.75, 2.5)(10.25,2.05)(10.5,2)\psline[linewidth=2.5pt]{-}(10.5,2)(11,2)\psline{-}(11,2)(13,2)
\psline{-}(3.5,2)(9,2)\pscurve{-}(9,2)(9.25,2.05)(9.75, 2.5)(10.25,2.95)(10.5,3)\psline{-}(10.5,3)(13,3)
\uput[u](9,1.8){\footnotesize{$-1$}}
\psline{-}(3.5,1)(12,1)\psline[linewidth=2.5pt]{-}(12,1)(13,1)
\psline[linewidth=2.5pt]{-}(11,2)(12,1) \uput[l](11.5,1.5){$p$}

%%%%%%%% left string labels
\uput[l](3.5,7){\scriptsize$j$}
\uput[l](3.5,6){\scriptsize$j+1$}
\uput[l](3.5,5){\scriptsize$j+2$}
\uput[l](3.5,3){\scriptsize$i-1$}
\uput[l](3.5,2){\scriptsize$i$}
\uput[l](3.5,1){\scriptsize$i+1$}

%%%%%%%% right string labels
\uput[r](13,7){\scriptsize$j$}
\uput[r](13,6){\scriptsize$j+1$}
\uput[r](13,5){\scriptsize$j+2$}
\uput[r](13,3){\scriptsize$i-1$}
\uput[r](13,2){\scriptsize$i$}
\uput[r](13,1){\scriptsize$i+1$}

\end{pspicture*}

\begin{pspicture*}(-11.5,0)(15,7.5)
%%%%%%%% EAST, FROM EAST, TO WEST
\uput[r](-11.5,7){(IV)}
%%%%%%%% pseudopath picture
\psline{-}(-2.5,3.5)(-3.5,3.5)(-3.5,4.5)(-2.5,4.5)(-2.5,3.5) \uput[r](-3.6,4){$s_k$}
\psline{-}(-9,3.5)(-10,3.5)(-10,4.5)(-9,4.5)(-9,3.5) \uput[r](-10.1,4){$s_i$}
\psline{-}(-4.5,3.5)(-5.5,3.5)(-5.5,4.5)(-4.5,4.5)(-4.5,3.5) \uput[r](-5.6,4){$s_j$}

\pscurve{->}(-9.5,4.5)(-6.25,5.5)(-3,4.5) \uput[u](-6.25,5.3){$p$}
\psline{->}(-3.5,4)(-4.5,4) \uput[u](-4,3.8){$m$}
\uput[u](-7,3.5){\circle{0.5} $\hdots\hspace{6pt}$ \circle*{0.5}}

\psline[linestyle=dashed]{->}(-11.5,4)(-10,4)
\psline[linestyle=dashed]{->}(-5,3.5)(-5,2)

%%%%%%%% west step x-chip
\psline{-}(0,7)(1.75,7)\psline[linewidth=2.5pt]{-}(1.75,7)(2,7)\pscurve[linewidth=2.5pt]{-}(2,7)(2.25,6.95)(2.75, 6.5)(3.25,6.05)(3.5,6)\psline[linewidth=2.5pt]{-}(3.5,6)(4,6)
\psline[linewidth=2.5pt]{-}(0,6)(0.75,6)\psline{-}(0.75,6)(2,6)\pscurve{-}(2,6)(2.25,6.05)(2.75, 6.5)(3.25,6.95)(3.5,7)\psline{-}(3.5,7)(4,7)
\psline[linewidth=2.5pt]{-}(0.75,6)(1.75,7) \uput[l](1.25,6.5){$m$}
\psline{-}(0,5)(4,5)
\uput[u](3.5,5.8){\footnotesize{$-1$}}
\psline{-}(0,3)(4,3)
\psline{-}(0,2)(4,2)
\psline{-}(0,1)(4,1)

%%%%%%%% east step chips
\psline{-}(3.5,7)(13,7)
\psline[linewidth=2.5pt]{-}(3.5,6)(5.25,6)\pscurve[linewidth=2.5pt]{-}(5.25,6)(5.5,5.95)(6, 5.5)(6.5,5.05)(6.75,5)\psline{-}(6.75,5)(13,5)
\psline[linewidth=2.5pt,linestyle=dashed]{-}(6.75,5)(7.25,5)(8.75,3)(9,3)
\psline{-}(4,5)(5,6) \uput[l](4.5,5.5){$m'$}
\psline{-}(3.5,5)(5.25,5)\pscurve{-}(5.25,5)(5.5,5.05)(6, 5.5)(6.5,5.95)(6.75,6)\psline{-}(6.75,6)(13,6)
\uput[u](6.75,4.8){\footnotesize{$-1$}}
\uput[l](9.5,4.2){$\vdots$}
\psline{-}(3.5,3)(9,3)\pscurve[linewidth=2.5pt]{-}(9,3)(9.25,2.95)(9.75, 2.5)(10.25,2.05)(10.5,2)\psline[linewidth=2.5pt]{-}(10.5,2)(11,2)\psline{-}(11,2)(13,2)
\psline{-}(3.5,2)(9,2)\pscurve{-}(9,2)(9.25,2.05)(9.75, 2.5)(10.25,2.95)(10.5,3)\psline{-}(10.5,3)(13,3)
\uput[u](9,1.8){\footnotesize{$-1$}}
\psline{-}(3.5,1)(12,1)\psline[linewidth=2.5pt]{-}(12,1)(13,1)
\psline[linewidth=2.5pt]{-}(11,2)(12,1) \uput[l](11.5,1.5){$p$}

%%%%%%%% left string labels
\uput[l](0,7){\scriptsize$j$}
\uput[l](0,6){\scriptsize$j+1$}
\uput[l](0,5){\scriptsize$j+2$}
\uput[l](0,3){\scriptsize$i-1$}
\uput[l](0,2){\scriptsize$i$}
\uput[l](0,1){\scriptsize$i+1$}

%%%%%%%% right string labels
\uput[r](13,7){\scriptsize$j$}
\uput[r](13,6){\scriptsize$j+1$}
\uput[r](13,5){\scriptsize$j+2$}
\uput[r](13,3){\scriptsize$i-1$}
\uput[r](13,2){\scriptsize$i$}
\uput[r](13,1){\scriptsize$i+1$}

\end{pspicture*}

\begin{pspicture*}(-8,0)(10.5,7.5)
%%%%%%%% SOUTH
\uput[r](-8,7){(V)}
%%%%%%%% pseudopath picture
\psline{-}(-4,7)(-4,6)(-5,6)(-5,7)(-4,7)\uput[r](-5.1,6.5){$s_i$}
\psline{-}(-4,2)(-4,1)(-5,1)(-5,2)(-4,2)\uput[r](-5.1,1.5){$s_j$}
\pscurve{->}(-5,6.5)(-6,4)(-5,1.5)

\uput[r](-4.9,5){\circle*{0.5}}
\uput[r](-4.9,4.2){$\vdots$}
\uput[r](-4.9,3){\circle{0.5}}

%%%%%%%% x- and s-chips
\psline{-}(0,7)(8.5,7)
\psline{-}(0,6)(6,6)\pscurve{-}(6,6)(6.25,5.95)(6.75, 5.5)(7.25,5.05)(7.5,5)\psline{-}(7.5,5)(8.5,5)
\psline[linewidth=2.5pt,linestyle=dashed]{-}(2.5,3)(2.75,3)(4.5,5)(4.75,5)
\psline{-}(4.75,5)(5.75,6) \uput[l](5.25,5.5){$m'$}
\psline{-}(0,5)(4.75,5)\psline[linewidth=2.5pt]{-}(4.75,5)(6,5)\pscurve[linewidth=2.5pt]{-}(6,5)(6.25,5.05)(6.75, 5.5)(7.25,5.95)(7.5,6)\psline[linewidth=2.5pt]{-}(7.5,6)(8.5,6)
\uput[u](7.5,4.8){\footnotesize{$-1$}}
\uput[l](4.5,4.2){$\vdots$}
\psline{-}(0,3)(1,3)\pscurve{-}(1,3)(1.25,2.95)(1.75,2.5)(2.25,2.05)(2.5,2)\psline{-}(2.5,2)(8.5,2)
\psline[linewidth=2.5pt]{-}(0,2)(1,2)\pscurve[linewidth=2.5pt]{-}(1,2)(1.25,2.05)(1.75, 2.5)(2.25,2.95)(2.5,3)\psline{-}(2.5,3)(8.5,3)
\uput[u](1,1.8){\footnotesize{$-1$}}
\psline{-}(0,1)(8.5,1)

%%%%%%%% left string labels
\uput[l](0,7){\scriptsize$i$}
\uput[l](0,6){\scriptsize$i+1$}
\uput[l](0,5){\scriptsize$i+2$}
\uput[l](0,3){\scriptsize$j-1$}
\uput[l](0,2){\scriptsize$j$}
\uput[l](0,1){\scriptsize$j+1$}

%%%%%%%% right string labels
\uput[r](8.5,7){\scriptsize$i$}
\uput[r](8.5,6){\scriptsize$i+1$}
\uput[r](8.5,5){\scriptsize$i+2$}
\uput[r](8.5,3){\scriptsize$j-1$}
\uput[r](8.5,2){\scriptsize$j$}
\uput[r](8.5,1){\scriptsize$j+1$}

\end{pspicture*}
\end{center}
\caption{Steps in pseudopaths and their corresponding fragments of the chip network.}
\label{fig:bijection}
\end{figure}

Recall from Figure \ref{fig:readingorder} that each modified network comes from a
diagram, and that every box of a diagram is naturally associated
with a simple generator $s_i$.  Therefore every internal vertex in a modified
network is naturally associated with a simple generator $s_i$ for some $i$.
We will call this the \emph{position} of the vertex.

Let us consider the various kinds of steps in a pseudopath.
Such steps naturally
fall into one of the following categories
(illustrated in Figure \ref{fig:bijection}):
\begin{enumerate}
\item[0.] A single step east, which starts at a source, and ends
at position $s_i$.
\item[I.] A single step east, which is preceded by a south step,
and followed by an east or south step.  Such a step starts and ends
at positions $s_i$ and $s_j$ (for $i>j$), and is labeled by some weight $p$.
It may skip over some (positions corresponding to)
white and black stones in the Go-diagram.
\item[II.] An east-west combination step, which is preceded by a south
step, and travels from position $s_i$ to $s_k$ to $s_j$
(where $i>j>k$).  The two components of such a step are labeled by
some weights $p$ and $m$, and may skip over some white and black stones.
\item[III.] A single step east, which is preceded by an east step,
and followed by an east or south step.  Such a step starts and ends
at positions $s_i$ and $s_j$ (for $i>j$), and is labeled by some weight $p$.
It may skip over some white and black stones.
\item[IV.]
An east-west combination step, which is preceded by an east
step, and travels from position $s_i$ to $s_k$ to $s_j$
(where $i>j>k$).  The two components of such a step are labeled by
some weights $p$ and $m$, and may skip over some white and black stones.
\item[V.] A south step, which starts and ends at positions
$s_i$ and $s_j$ (for $i<j$).  Such a step may skip over some
white and black stones.
\end{enumerate}

The above steps in a pseudopath correspond to the following portion of a
route in a chip network:
\begin{enumerate}
\item[0.] Start at the boundary vertex $i+1$ at the east of the chip network.
\item[I.] Start at level $i$,
then travel west straight across the $y_i(p)$ chip.
For each white
or black stone (say in position $s_{\ell}$)
which lies in between positions $s_i$ and $s_j$,
travel northwest through the corresponding $s_{\ell}$  or $x_{\ell}$-chip, ending at level $j+1$.
\item[II.] Start at level $i$,
then travel west straight across the $y_i(p)$ chip.
For each white
or black stone (say in position $s_{\ell}$)
which lies in between positions $s_i$ and $s_j$,
travel northwest through the corresponding $s_{\ell}$  or $x_{\ell}$-chip.
Finally, travel along the $-1$-edge and then the $m$-edge of the
$x_j(m)$ chip, ending at level $j+1$.
\item[III.] Start at level $i+1$,
then travel northwest along the $p$-edge in the  $y_i(p)$ chip.
For each white
or black stone (say in position $s_{\ell}$)
which lies in between positions $s_i$ and $s_j$,
travel northwest through the corresponding $s_{\ell}$  or $x_{\ell}$-chip, ending at level $j+1$.
\item[IV.] Start at level $i+1$,
then travel northwest along the $p$-edge in  the $y_i(p)$ chip.
For each white
or black stone (say in position $s_{\ell}$)
which lies in between positions $s_i$ and $s_j$,
travel northwest through the corresponding $s_{\ell}$  or $x_{\ell}$-chip.
Finally, travel along the $-1$-edge and the $m$-edge of the
$x_j(m)$ chip, ending at level $j+1$.
\item[V.] Start at level $i+1$.
For each white
or black stone (say in position $s_{\ell}$)
which lies in between positions $s_i$ and $s_j$,
travel southwest through the corresponding $s_{\ell}$  or $x_{\ell}$-chip, ending at level $j$.
\end{enumerate}

It is a straightforward exercise to verify that this map
is a bijection between pseudopaths $P$ from $i'_s$ to $t$
in the modified
network, and routes $Q$ between the $t$ vertex at the west
and the $s+(n-k)$ vertex at the east in the chip network.
Moreover, the
weights of $P$ and $Q$ are equal.
See Figure~\ref{fig:pseudo-chip-eg} for example of entire pseudopaths and routes.
\end{proof}

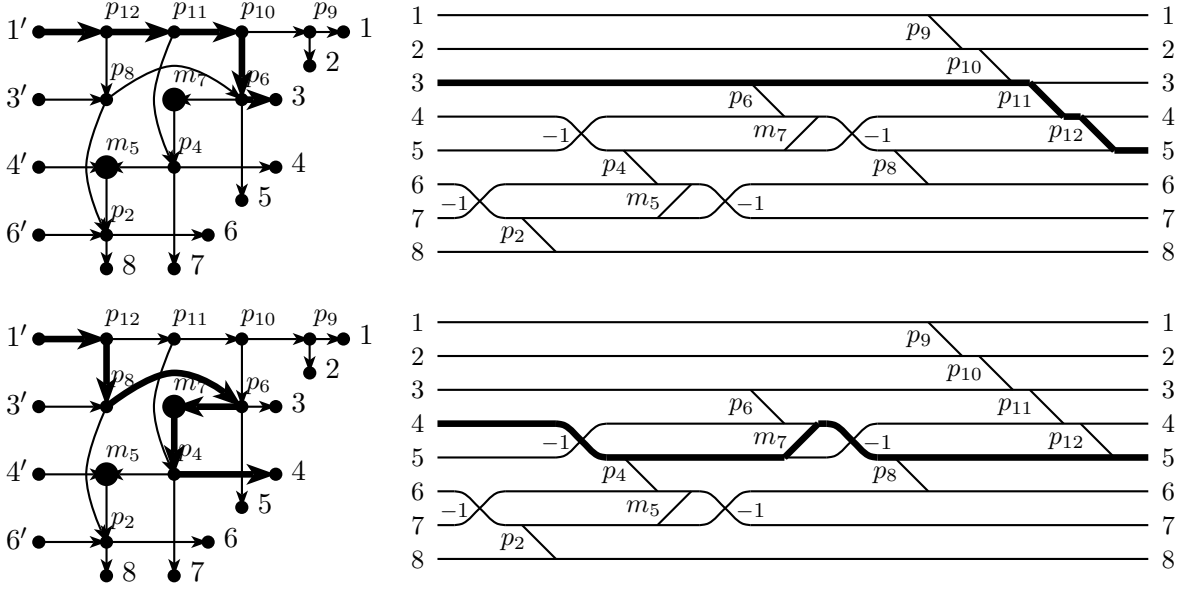
\begin{figure}[ht]
\begin{center}
\psset{unit=.45cm,dotstyle=o,dotsize=5pt 0,linewidth=0.8pt,arrowsize=3pt 2,arrowinset=0.25}

\begin{pspicture*}(-1,0)(10,9)

% horizontal edges --- REVERSE DIRECTION ON P EDGES
\psline{<-}(5,2)(2,2) \uput[u](2.5,2){$p_2$}
\psline{<-}(7,4)(4,4) \uput[u](4.5,4){$p_4$}
\psline{->}(4,4)(2,4) \uput[u](2.5,4){$m_5$}
\psline[linewidth=2.5pt]{<-}(7,6)(6,6) \uput[u](6.5,6){$p_6$}
\psline{->}(6,6)(4,6) \uput[u](4.5,6){$m_7$}
\pscurve{<-}(6,6)(4,7)(2,6) \uput[u](2.5,6.2){$p_8$}
\psline{<-}(9,8)(8,8) \uput[u](8.5,8){$p_9$}
\psline{<-}(8,8)(6,8) \uput[u](6.5,8){$p_{10}$}
\psline[linewidth=2.5pt]{<-}(6,8)(4,8) \uput[u](4.5,8){$p_{11}$}
\psline[linewidth=2.5pt]{<-}(4,8)(2,8) \uput[u](2.5,8){$p_{12}$}

%vertical edges
\psline{->}(2,8)(2,6)
\pscurve{->}(2,6)(1.4,4)(2,2)
\psline{->}(2,4)(2,2)
\psline{->}(2,2)(2,1)
\pscurve{->}(4,8)(3.4,6)(4,4)
\psline{->}(4,6)(4,4)
\psline{->}(4,4)(4,1)
\psline[linewidth=2.5pt]{->}(6,8)(6,6)
\psline{->}(6,6)(6,3.02)
\psline{->}(8,8)(8,7)

%interior vertices
\psdots[dotstyle=*,linecolor=black](2,8)
\psdots[dotstyle=*,linecolor=black](2,6)
\psdots[dotsize=9pt 0,dotstyle=*,linecolor=black](2,4)
\psdots[dotstyle=*,linecolor=black](2,2)
\psdots[dotstyle=*,linecolor=black](4,4)
\psdots[dotsize=9pt 0,dotstyle=*,linecolor=black](4,6)
\psdots[dotstyle=*,linecolor=black](4,8)
\psdots[dotstyle=*,linecolor=black](6,8)
\psdots[dotstyle=*,linecolor=black](8,8)
\psdots[dotstyle=*,linecolor=black](6,6)
\psdots[dotstyle=*,linecolor=black](9,8)

%boundary vertices

%boundary vertices
\uput[r](9.08,8.12){\large{1}}
\psdots[dotstyle=*,linecolor=black](8,7)
\uput[r](8.08,7.12){\large{2}}
\psdots[dotstyle=*,linecolor=black](7,6)
\uput[r](7.08,6.12){\large{3}}
\psdots[dotstyle=*,linecolor=black](7,4)
\uput[r](7.08,4.12){\large{4}}
\psdots[dotstyle=*,linecolor=black](6,3.02)
\uput[r](6.08,3.14){\large{5}}
\psdots[dotstyle=*,linecolor=black](5,2)
\uput[r](5.08,2.12){\large{6}}
\psdots[dotstyle=*,linecolor=black](4,1)
\uput[r](4.08,1.12){\large{7}}
\psdots[dotstyle=*,linecolor=black](2,1)
\uput[r](2.08,1.12){\large{8}}

%new boundary vertices
\psdots[dotstyle=*,linecolor=black](0,8)
\uput[l](0.08,8.12){\large{$1'$}}
\psdots[dotstyle=*,linecolor=black](0,6)
\uput[l](0.08,6.12){\large{$3'$}}
\psdots[dotstyle=*,linecolor=black](0,4)
\uput[l](0.08,4.12){\large{$4'$}}
\psdots[dotstyle=*,linecolor=black](0,2)
\uput[l](0.08,2.12){\large{$6'$}}

%new connecting edges
\psline[linewidth=2.5pt]{->}(0,8)(2,8)
\psline{->}(0,6)(2,6)
\psline{->}(0,4)(2,4)
\psline{->}(0,2)(2,2)

\end{pspicture*}\quad\begin{pspicture*}(-1.5,-0.5)(21.5,8.5)

\psline{-}(-0.5,8)(20.5,8) \uput[l](-0.5,8){$1$}\uput[r](20.5,8){$1$}
\psline{-}(-0.5,7)(20.5,7) \uput[l](-0.5,7){$2$}\uput[r](20.5,7){$2$}
\psline[linewidth=2.5pt]{-}(-0.5,6)(17,6)\psline{-}(17,6)(20.5,6) \uput[l](-0.5,6){$3$}\uput[r](20.5,6){$3$}
\psline{-}(-0.5,5)(3,5)\pscurve{-}(3,5)(3.25,4.95)(3.75, 4.5)(4.25,4.05)(4.5,4)\psline{-}(4.5,4)(11,4)\pscurve{-}(11,4)(11.25,4.05)(11.75, 4.5)(12.25,4.95)(12.5,5)\psline{-}(12.5,5)(18,5)\psline[linewidth=2.5pt]{-}(18,5)(18.5,5)\psline{-}(18.5,5)(20.5,5) \uput[l](-0.5,5){$4$}\uput[r](20.5,5){$4$}
\psline{-}(-0.5,4)(3,4)\pscurve{-}(3,4)(3.25,4.05)(3.75, 4.5)(4.25,4.95)(4.5,5)\psline{-}(4.5,5)(11,5)\pscurve{-}(11,5)(11.25,4.95)(11.75, 4.5)(12.25,4.05)(12.5,4)\psline{-}(12.5,4)(19.5,4)\psline[linewidth=2.5pt]{-}(19.5,4)(20.5,4) \uput[l](-0.5,4){$5$}\uput[r](20.5,4){$5$}
\psline{-}(-0.5,3)(0,3)\pscurve{-}(0,3)(0.25,2.95)(0.75, 2.5)(1.25,2.05)(1.5,2)\psline{-}(1.5,2)(7.25,2)\pscurve{-}(7.25,2)(7.5,2.05)(8, 2.5)(8.5,2.95)(8.75,3)\psline{-}(8.75,3)(20.5,3) \uput[l](-0.5,3){$6$}\uput[r](20.5,3){$6$}
\psline{-}(-0.5,2)(0,2)\pscurve{-}(0,2)(0.25,2.05)(0.75, 2.5)(1.25,2.95)(1.5,3)\psline{-}(1.5,3)(7.25,3)\pscurve{-}(7.25,3)(7.5,2.95)(8, 2.5)(8.5,2.05)(8.75,2)\psline{-}(8.75,2)(20.5,2) \uput[l](-0.5,2){$7$}\uput[r](20.5,2){$7$}
\psline{-}(-0.5,1)(20.5,1) \uput[l](-0.5,1){$8$}\uput[r](20.5,1){$8$}

\uput[u](3,3.8){\footnotesize$-1$}
\uput[u](12.5,3.8){\footnotesize$-1$}
\uput[u](0,1.8){\footnotesize$-1$}
\uput[u](8.75,1.8){\footnotesize$-1$}

\psline{-}(2,2)(3,1)\uput[l](2.5,1.5){$p_2$}
\psline{-}(5,4)(6,3)\uput[l](5.5,3.5){$p_4$}
\psline{-}(8.75,6)(9.75,5)\uput[l](9.25,5.5){$p_6$}
\psline{-}(13,4)(14,3)\uput[l](13.5,3.5){$p_8$}
\psline{-}(14,8)(15,7)\uput[l](14.5,7.5){$p_9$}
\psline{-}(15.5,7)(16.5,6)\uput[l](16,6.5){$p_{10}$}
\psline[linewidth=2.5pt]{-}(17,6)(18,5)\uput[l](17.5,5.5){$p_{11}$}
\psline[linewidth=2.5pt]{-}(18.5,5)(19.5,4)\uput[l](19,4.5){$p_{12}$}

\psline{-}(6,2)(7,3) \uput[l](6.5,2.5){$m_5$}
\psline{-}(9.75,4)(10.75,5) \uput[l](10.25,4.5){$m_7$}

\end{pspicture*}

\begin{pspicture*}(-1,0)(10,9)

% horizontal edges --- REVERSE DIRECTION ON P EDGES
\psline{<-}(5,2)(2,2) \uput[u](2.5,2){$p_2$}
\psline[linewidth=2.5pt]{<-}(7,4)(4,4) \uput[u](4.5,4){$p_4$}
\psline{->}(4,4)(2,4) \uput[u](2.5,4){$m_5$}
\psline{<-}(7,6)(6,6) \uput[u](6.5,6){$p_6$}
\psline[linewidth=2.5pt]{->}(6,6)(4,6) \uput[u](4.5,6){$m_7$}
\pscurve[linewidth=2.5pt]{<-}(6,6)(4,7)(2,6) \uput[u](2.5,6.2){$p_8$}
\psline{<-}(9,8)(8,8) \uput[u](8.5,8){$p_9$}
\psline{<-}(8,8)(6,8) \uput[u](6.5,8){$p_{10}$}
\psline{<-}(6,8)(4,8) \uput[u](4.5,8){$p_{11}$}
\psline{<-}(4,8)(2,8) \uput[u](2.5,8){$p_{12}$}

%vertical edges
\psline[linewidth=2.5pt]{->}(2,8)(2,6)
\pscurve{->}(2,6)(1.4,4)(2,2)
\psline{->}(2,4)(2,2)
\psline{->}(2,2)(2,1)
\pscurve{->}(4,8)(3.4,6)(4,4)
\psline[linewidth=2.5pt]{->}(4,6)(4,4)
\psline{->}(4,4)(4,1)
\psline{->}(6,8)(6,6)
\psline{->}(6,6)(6,3.02)
\psline{->}(8,8)(8,7)

%interior vertices
\psdots[dotstyle=*,linecolor=black](2,8)
\psdots[dotstyle=*,linecolor=black](2,6)
\psdots[dotsize=9pt 0,dotstyle=*,linecolor=black](2,4)
\psdots[dotstyle=*,linecolor=black](2,2)
\psdots[dotstyle=*,linecolor=black](4,4)
\psdots[dotsize=9pt 0,dotstyle=*,linecolor=black](4,6)
\psdots[dotstyle=*,linecolor=black](4,8)
\psdots[dotstyle=*,linecolor=black](6,8)
\psdots[dotstyle=*,linecolor=black](8,8)
\psdots[dotstyle=*,linecolor=black](6,6)
\psdots[dotstyle=*,linecolor=black](9,8)

%boundary vertices

%boundary vertices
\uput[r](9.08,8.12){\large{1}}
\psdots[dotstyle=*,linecolor=black](8,7)
\uput[r](8.08,7.12){\large{2}}
\psdots[dotstyle=*,linecolor=black](7,6)
\uput[r](7.08,6.12){\large{3}}
\psdots[dotstyle=*,linecolor=black](7,4)
\uput[r](7.08,4.12){\large{4}}
\psdots[dotstyle=*,linecolor=black](6,3.02)
\uput[r](6.08,3.14){\large{5}}
\psdots[dotstyle=*,linecolor=black](5,2)
\uput[r](5.08,2.12){\large{6}}
\psdots[dotstyle=*,linecolor=black](4,1)
\uput[r](4.08,1.12){\large{7}}
\psdots[dotstyle=*,linecolor=black](2,1)
\uput[r](2.08,1.12){\large{8}}

%new boundary vertices
\psdots[dotstyle=*,linecolor=black](0,8)
\uput[l](0.08,8.12){\large{$1'$}}
\psdots[dotstyle=*,linecolor=black](0,6)
\uput[l](0.08,6.12){\large{$3'$}}
\psdots[dotstyle=*,linecolor=black](0,4)
\uput[l](0.08,4.12){\large{$4'$}}
\psdots[dotstyle=*,linecolor=black](0,2)
\uput[l](0.08,2.12){\large{$6'$}}

%new connecting edges
\psline[linewidth=2.5pt]{->}(0,8)(2,8)
\psline{->}(0,6)(2,6)
\psline{->}(0,4)(2,4)
\psline{->}(0,2)(2,2)

\end{pspicture*}\quad\begin{pspicture*}(-1.5,-0.5)(21.5,8.5)

\psline{-}(-0.5,8)(20.5,8) \uput[l](-0.5,8){$1$}\uput[r](20.5,8){$1$}
\psline{-}(-0.5,7)(20.5,7) \uput[l](-0.5,7){$2$}\uput[r](20.5,7){$2$}
\psline{-}(-0.5,6)(20.5,6) \uput[l](-0.5,6){$3$}\uput[r](20.5,6){$3$}
\psline[linewidth=2.5pt]{-}(-0.5,5)(3,5)\pscurve[linewidth=2.5pt]{-}(3,5)(3.25,4.95)(3.75, 4.5)(4.25,4.05)(4.5,4)\psline[linewidth=2.5pt]{-}(4.5,4)(9.75,4)\psline{-}(9.75,4)(11,4)\pscurve{-}(11,4)(11.25,4.05)(11.75, 4.5)(12.25,4.95)(12.5,5)\psline{-}(12.5,5)(20.5,5) \uput[l](-0.5,5){$4$}\uput[r](20.5,5){$4$}
\psline{-}(-0.5,4)(3,4)\pscurve{-}(3,4)(3.25,4.05)(3.75, 4.5)(4.25,4.95)(4.5,5)\psline{-}(4.5,5)(10.75,5)\psline[linewidth=2.5pt]{-}(10.75,5)(11,5)\pscurve[linewidth=2.5pt]{-}(11,5)(11.25,4.95)(11.75, 4.5)(12.25,4.05)(12.5,4)\psline[linewidth=2.5pt]{-}(12.5,4)(20.5,4) \uput[l](-0.5,4){$5$}\uput[r](20.5,4){$5$}
\psline{-}(-0.5,3)(0,3)\pscurve{-}(0,3)(0.25,2.95)(0.75, 2.5)(1.25,2.05)(1.5,2)\psline{-}(1.5,2)(7.25,2)\pscurve{-}(7.25,2)(7.5,2.05)(8, 2.5)(8.5,2.95)(8.75,3)\psline{-}(8.75,3)(20.5,3) \uput[l](-0.5,3){$6$}\uput[r](20.5,3){$6$}
\psline{-}(-0.5,2)(0,2)\pscurve{-}(0,2)(0.25,2.05)(0.75, 2.5)(1.25,2.95)(1.5,3)\psline{-}(1.5,3)(7.25,3)\pscurve{-}(7.25,3)(7.5,2.95)(8, 2.5)(8.5,2.05)(8.75,2)\psline{-}(8.75,2)(20.5,2) \uput[l](-0.5,2){$7$}\uput[r](20.5,2){$7$}
\psline{-}(-0.5,1)(20.5,1) \uput[l](-0.5,1){$8$}\uput[r](20.5,1){$8$}

\uput[u](3,3.8){\footnotesize$-1$}
\uput[u](12.5,3.8){\footnotesize$-1$}
\uput[u](0,1.8){\footnotesize$-1$}
\uput[u](8.75,1.8){\footnotesize$-1$}

\psline{-}(2,2)(3,1)\uput[l](2.5,1.5){$p_2$}
\psline{-}(5,4)(6,3)\uput[l](5.5,3.5){$p_4$}
\psline{-}(8.75,6)(9.75,5)\uput[l](9.25,5.5){$p_6$}
\psline{-}(13,4)(14,3)\uput[l](13.5,3.5){$p_8$}
\psline{-}(14,8)(15,7)\uput[l](14.5,7.5){$p_9$}
\psline{-}(15.5,7)(16.5,6)\uput[l](16,6.5){$p_{10}$}
\psline{-}(17,6)(18,5)\uput[l](17.5,5.5){$p_{11}$}
\psline{-}(18.5,5)(19.5,4)\uput[l](19,4.5){$p_{12}$}

\psline{-}(6,2)(7,3) \uput[l](6.5,2.5){$m_5$}
\psline[linewidth=2.5pt]{-}(9.75,4)(10.75,5) \uput[l](10.25,4.5){$m_7$}

\end{pspicture*}
\end{center}
\caption{Pseudopaths in the modified network and their corresponding routes in the chip network.}
\label{fig:pseudo-chip-eg}
\end{figure}

%%%%%%%%%%%%%%%%%%%%%%%%%%%%%%%%%%%%%%%%%%%%%%%%%%%%%%%%

\section{Proof of the main result}\label{sec:proof}

Let $D$ be a diagram which contains $t$ pluses and $u$ black stones.
In Section \ref{sec:rational} we will define an isomorphism
$$\Psi = \Psi_D: (\K^*)^t \times \K^u \longrightarrow
(\K^*)^t \times \K^u$$
which maps each
parameter from the weight matrix of the network $N_D$ to
a Laurent monomial in the
parameters used in the MR-matrix $M=M_D$.
Then in Section \ref{sec:row}, we will show that after applying
$\Psi$,
our network parameterization of the network
component $\mathcal{R}_D$ coincides with the corresponding
MR-parameterization of the projected Deodhar component $\mathcal{P}_{\v,\w}$.
Combining this fact with
Proposition \ref{p:parameterization} yields
Theorem \ref{th:main}.

\subsection{A rational transformation of parameters}\label{sec:rational}

\begin{definition}\label{def:rationalmap}
Let $D$ be a diagram, and
let $b_0$ be a box of $D$ containing a $+$ or $\smallblack$.
Let $b_1$ be the nearest box to the right of $b_0$ which
contains a $+$ (if it exists). Let $R_t$ be the set
of boxes in the same row as $b_0$ which are to the right
of $b_0$ and left of $b_1$.  Let $R_{\ell}$ be the set of
boxes in the same column as $b_0$ and below $b_0$.
If $b_1$ exists, let $R_{r}$ be the set of boxes
in the same column as $b_1$ and below $b_1$ (otherwise
$R_r = \emptyset$).
(See Figure \ref{fig:rationalmap}.)
Let $R_r^+$ (resp. $R_{\ell}^+$) be the set of boxes
in $R_r$ (resp. $R_{\ell}$) containing a $+$.
Let $R=R_r \cup R_{\ell} \cup R_t$ and
let $R^{\bullet}$ be the set of boxes in $R$ containing a $\smallblack$.

If $b_0$ contains a $+$, then define
$$\Psi(a_{b_0}) =
\frac{(-1)^{|R^{\bullet}|} \prod_{b\in R_r^+} p_b}
{p_{b_0} \prod_{b\in R_{\ell}^+} p_b}.$$
And if $b_0$ contains a $\smallblack$, then define
$$\Psi(c_{b_0}) =
\frac{m_{b_0}  (-1)^{|R^{\bullet}|} \prod_{b\in R_r^+} p_b}
{\prod_{b\in R_{\ell}^+} p_b}.$$
We also extend the definition of $\Psi$ to all polynomials
in the $a_b$'s and $c_b$'s by requiring it to be a  ring homomorphism.
\end{definition}

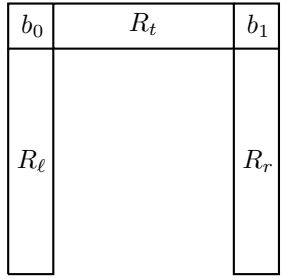
\begin{figure}[ht]
\begin{center}
\psset{unit=.3cm,dotstyle=o,dotsize=5pt 0,linewidth=0.8pt,arrowsize=3pt 2,arrowinset=0.25}
\begin{pspicture*}(0,0)(14,14)
\psline{-}(1,1)(3,1)(3,11)(1,11)(1,1)\uput[r](0.8,6){$R_\ell$}
\psline{-}(1,11)(1,13)(3,13) \uput[r](1,12){$b_0$}
\psline{-}(3,13)(11,13)(11,11)(3,11)(3,13) \uput[u](7,11){$R_t$}
\psline{-}(11,13)(13,13)(13,11) \uput[r](11,12){$b_1$}
\psline{-}(11,11)(13,11)(13,1)(11,1)(11,11) \uput[r](10.8,6){$R_r$}
\end{pspicture*}
\end{center}
\caption{The definition of $b_0$, $b_1$, $R_t$, $R_{\ell}$, and $R_r$.}
\label{fig:rationalmap}
\end{figure}

\begin{remark}
Clearly $\Psi = \Psi_D$ is an isomorphism from
$(\K^*)^t \times \K^u$ to itself.
\end{remark}

\begin{example}
Consider the network from Figure \ref{fig:network}
(shown again in Figure \ref{fig:paths-network}).
Then we have
$\Psi(a_2) = \frac{1}{p_2}$,
$\Psi(a_4) = \frac{1}{p_4}$,
$\Psi(c_5) = \frac{m_5}{p_2}$,
$\Psi(a_6) = \frac{1}{p_6}$,
$\Psi(c_7) = \frac{m_7}{p_4}$,
$\Psi(a_8) = \frac{1}{p_2 p_8}$,
$\Psi(a_9) = \frac{1}{p_9}$,
$\Psi(a_{10}) = \frac{1}{p_6 p_{10}}$,
$\Psi(a_{11}) = \frac{-p_6}{p_4 p_{11}}$,
and $\Psi(a_{12}) = \frac{p_4}{p_2 p_8 p_{12}}$.
\end{example}

From Definition \ref{def:rationalmap}, one may easily deduce
a formula for $\Psi(w(P))$, where $P$ is a path in $N_D$.
We will state this formula in terms of the Go-diagram.
Note that one may identify a path $P$ in $N_D$ with a
connected sequence $\P$
of boxes in the Go-diagram, where any two adjacent
boxes must share a side.
We call a box $b$ in $\P$ a
\emph{corner box} if the path
$P$ turns from west to south, or from south to west,
at the vertex associated to $b$.
(Such a box $b$ in $D$ must contain a $+$ or $\smallblack$.)

\begin{proposition}\label{prop:Psi}
Let $P$ be a path in the network $N_D$, which we identify with
a sequence $\P$ of boxes in the Go-diagram $D$.
Among the boxes in $\P$, let $B_1$ denote the subset containing a $+$;
let $B_2$ denote the subset containing a $\smallblack$  which are
corner boxes of $\P$; and let $B_3$ denote the subset containing
a $\smallblack$ which are not corner boxes of $\P$.
Then
\begin{equation}
\label{eq:rational}
\Psi(w(P))  = \frac{(-1)^{|B_3|} \prod_{b\in B_2} m_b}
                     {\prod_{b\in B_1} p_b}.
\end{equation}
\end{proposition}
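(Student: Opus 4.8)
The plan is to expand $\Psi(w(P))$ straight from Definition~\ref{def:rationalmap} and match the three kinds of factors. Recall that $w(P)$ is the product of the edge weights along $P$, that vertical edges have weight $1$, and that a horizontal edge has weight $a_b$ or $c_b$ according as its left endpoint is a $+$-box or a black stone. A box $b$ of $D$ is the left endpoint of a horizontal edge traversed by $P$ exactly when the step of $P$ entering $b$ is a step west; let $S^{+}$ (resp.\ $S^{\bullet}$) denote the set of $+$-boxes (resp.\ black-stone boxes) of this type, and put $S=S^{+}\cup S^{\bullet}$. Since $\Psi$ is a ring homomorphism, $\Psi(w(P))=\prod_{b\in S^{+}}\Psi(a_b)\cdot\prod_{b\in S^{\bullet}}\Psi(c_b)$, which is a signed Laurent monomial in the $p_i$'s and $m_j$'s; I will show that its $m$-part is $\prod_{b\in B_2}m_b$, its $p$-part is $\prod_{b\in B_1}p_b^{-1}$, and its sign is $(-1)^{|B_3|}$.

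First I would record two structural facts about $P$. (1) A black-stone vertex has a unique edge into it, from the east, and a unique edge out of it, to the south; hence a black stone $z$ lying on $P$ is either visited by $P$ — in which case $P$ turns from west to south there, so $z\in B_2$ and $z\in S^{\bullet}$ — or else is crossed by $P$ along a single horizontal or vertical edge, in which case $z\in B_3$. In particular $S^{\bullet}=B_2$, which settles the $m$-part, and $B_3$ is the disjoint union of the black stones crossed horizontally and those crossed vertically. (2) The column index weakly decreases along $P$, so $P$ meets each column in one contiguous stretch; the first box of a stretch is reached by a step west (from the column to its right, or, for the initial stretch, from the source), so it lies in $S$, and if $P$ leaves the column by a step west then the last box of the stretch is a $+$-box. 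For any box $b_0$ the set $R_r$ (the column below $b_1$, where $b_1$ is the nearest $+$ to the right of $b_0$) is literally the set $R_\ell$ computed for $b_1$; using this one checks that the exit box of each stretch is precisely the box $b_1$ attached (as in Definition~\ref{def:rationalmap}) to the entry box of the next stretch along $P$. Writing $T$ for the collection of these exit boxes, we conclude that every visited column contains exactly one box of $S$ (its entry) and at most one box of $T$ (its exit, missing only for the sink's column), with the entry weakly above the exit; moreover each box of $T$ is the $b_1$ of exactly one box of $S$.

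For the $p$-part, I would compute the exponent of a fixed $+$-box $b$ in the product $\prod_{b'\in S^{+}}\Psi(a_{b'})\prod_{b'\in S^{\bullet}}\Psi(c_{b'})$. Rewriting numerators via $R_r(b')=R_\ell(b_1)$, this exponent equals $-[b\in S^{+}]$, plus the number of boxes of $T$ strictly above $b$ in its column, minus the number of boxes of $S$ strictly above $b$ in its column. By fact~(2), $b$'s column contains at most one box $e\in S$ and at most one box $x\in T$ with $e$ weakly above $x$, and the visited $+$-boxes of that column are exactly those lying weakly between $e$ and $x$ (or weakly below $e$, in the sink's column); a short case check on the position of $b$ relative to $e$ and $x$ then gives exponent $-1$ when $b$ is visited and $0$ otherwise, which is the $p$-part. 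For the sign I would split $\sum_{b'\in S}|R^{\bullet}(b')|$ into the three contributions from $R_t$, from the column below $b'$, and from the column below $b_1$. The $R_t$-contribution counts the black stones crossed by horizontal edges of $P$, because the edge of $P$ entering $b'\in S$ crosses exactly the boxes of $R_t$ for $b'$, and these edges exhaust the horizontal edges of $P$. Regrouping the other two contributions by column (each box of $T$ being the $b_1$ of exactly one box of $S$), the contribution of a visited column $\gamma$ is the number of black stones below $e_\gamma$ plus the number below $x_\gamma$ in that column, which modulo $2$ is the number of black stones $P$ crosses vertically inside $\gamma$. Hence $\sum_{b'\in S}|R^{\bullet}(b')|\equiv |B_3|\pmod 2$ by fact~(1), and assembling the three parts yields~\eqref{eq:rational}.

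The hard part will be the bookkeeping behind fact~(2): one must check that the entry box of the first stretch really is the target of a step west out of the source (so it lies in $S$), that the sink's column is the only visited column with no exit box in $T$, and that degenerate configurations — for instance a boundary vertex that is at once the right end of a row and the bottom of a column — cause no trouble. Once the entry/exit description of $S$ and $T$ is in place, the $p$-part and the sign reduce to the short column-by-column verifications indicated above, and the $m$-part is immediate from fact~(1).
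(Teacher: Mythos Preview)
Your argument is correct, and there is essentially nothing in the paper to compare it against: the entire proof of Proposition~\ref{prop:Psi} reads ``We leave this as an exercise for the reader.  It is a simple consequence of Definition~\ref{def:rationalmap}.''  Your column-by-column bookkeeping via the entry and exit vertices $e_i,x_i$, together with the telescoping identity $R_r(e_{i+1})=R_\ell(x_i)$, is precisely the natural way to carry out this exercise, and your verification of the $m$-part, the $p$-exponent, and the sign all check out.

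One small imprecision worth flagging: your claim that ``the first box of a stretch\dots lies in $S$'' is literally true only for columns in which $P$ actually has a vertex.  A column that is merely crossed by a single horizontal edge of $P$ has a one-box stretch consisting of a white or black stone, which is not in $S$.  This does not affect your argument, since your exponent and sign computations only ever invoke the columns containing some $e_i$ or $x_i$; just be explicit that ``visited column'' means a column containing a vertex of $P$, not merely a box of~$\mathcal P$.
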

\begin{proof}
We leave this as an exercise for the reader.  It is a simple
consequence of Definition \ref{def:rationalmap}.
\end{proof}

\begin{figure}
\psset{unit=.4cm,dotstyle=o,dotsize=5pt 0,linewidth=0.8pt,arrowsize=2pt 2,arrowinset=0.25}
\begin{pspicture*}(0,0.45)(10,9)
% horizontal edges
\psline{->}(5,2)(2,2) %\uput[u](2.5,2){$a_2$}
\psline{->}(7,4)(4,4) %\uput[u](4.5,4){$a_4$}
\psline[linewidth=2.5pt]{->}(4,4)(2,4) %\uput[u](2.5,4){$c_5$}
\psline{->}(7,6)(6,6) %\uput[u](6.5,6){$a_6$}
\psline{->}(6,6)(4,6) %\uput[u](4.5,6){$c_7$}
\pscurve{->}(6,6)(4,7)(2,6) %\uput[u](2.5,6.2){$a_8$}
\psline[linewidth=2.5pt]{->}(9,8)(8,8) %\uput[u](8.5,8){$a_9$}
\psline[linewidth=2.5pt]{->}(8,8)(6,8) %\uput[u](6.5,8){$a_{10}$}
\psline[linewidth=2.5pt]{->}(6,8)(4,8) %\uput[u](4.5,8){$a_{11}$}
\psline{->}(4,8)(2,8) %\uput[u](2.5,8){$a_{12}$}

%vertical edges
\psline{->}(2,8)(2,6)
\pscurve{->}(2,6)(1.4,4)(2,2)
\psline[linewidth=2.5pt]{->}(2,4)(2,2)
\psline[linewidth=2.5pt]{->}(2,2)(2,1)
\pscurve[linewidth=2.5pt]{->}(4,8)(3.4,6)(4,4)
\psline{->}(4,6)(4,4)
\psline{->}(4,4)(4,1)
\psline{->}(6,8)(6,6)
\psline{->}(6,6)(6,3.02)
\psline{->}(8,8)(8,7)

%interior vertices
\psdots[dotstyle=*,linecolor=black](2,8)
\psdots[dotstyle=*,linecolor=black](2,6)
\psdots[dotsize=9pt 0,dotstyle=*,linecolor=black](2,4)
\psdots[dotstyle=*,linecolor=black](2,2)
\psdots[dotstyle=*,linecolor=black](4,4)
\psdots[dotsize=9pt 0,dotstyle=*,linecolor=black](4,6)
\psdots[dotstyle=*,linecolor=black](4,8)
\psdots[dotstyle=*,linecolor=black](6,8)
\psdots[dotstyle=*,linecolor=black](8,8)
\psdots[dotstyle=*,linecolor=black](6,6)

%boundary vertices
\psdots[dotstyle=*,linecolor=black](9,8)
\uput[r](9.08,8.12){\large{1}}
\psdots[dotstyle=*,linecolor=black](8,7)
\uput[r](8.08,7.12){\large{2}}
\psdots[dotstyle=*,linecolor=black](7,6)
\uput[r](7.08,6.12){\large{3}}
\psdots[dotstyle=*,linecolor=black](7,4)
\uput[r](7.08,4.12){\large{4}}
\psdots[dotstyle=*,linecolor=black](6,3)
\uput[r](6.08,3.12){\large{5}}
\psdots[dotstyle=*,linecolor=black](5,2)
\uput[r](5.08,2.12){\large{6}}
\psdots[dotstyle=*,linecolor=black](4,1)
\uput[r](4.08,1.12){\large{7}}
\psdots[dotstyle=*,linecolor=black](2,1)
\uput[r](2.08,1.12){\large{8}}
\end{pspicture*}
\quad\setlength{\unitlength}{1.9pt}\begin{picture}(46,46)(4,0)
% Young Diagram
% horizontal lines
  \put(5,45){\line(1,0){40}}
  \put(5,35){\line(1,0){40}}
  \put(5,25){\line(1,0){30}}
  \put(5,15){\line(1,0){30}}
  \put(5,5){\line(1,0){20}}
% vertical lines
  \put(5,5){\line(0,1){40}}
  \put(15,5){\line(0,1){40}}
  \put(25,5){\line(0,1){40}}
  \put(35,15){\line(0,1){30}}
  \put(45,35){\line(0,1){10}}

% first row
  \put(7,38){\scalebox{1.7}{$+$}}
  \put(17,38){\scalebox{1.7}{$+$}}
  \put(27,38){\scalebox{1.7}{$+$}}
  \put(37,38){\scalebox{1.7}{$+$}}
% second row
  \put(7,28){\scalebox{1.7}{$+$}}
  \put(18,30){\hskip0.15cm\circle*{5}}
  \put(27,28){\scalebox{1.7}{$+$}}
% third row
  \put(8,20){\hskip0.15cm\circle*{5}}
  \put(17,18){\scalebox{1.7}{$+$}}
  \put(28,20){\hskip0.15cm\circle{5}}
% fourth row
  \put(7,8){\scalebox{1.7}{$+$}}
  \put(18,10){\hskip0.15cm\circle{5}}

\linethickness{0.7mm}
\put(15,45){\line(1,0){30}}
\put(15,15){\line(0,1){30}}
\put(5,25){\line(1,0){20}}
\put(5,5){\line(0,1){20}}
\put(15,35){\line(1,0){30}}
\put(25,15){\line(0,1){30}}
\put(5,15){\line(1,0){20}}
\put(15,5){\line(0,1){20}}
\put(5,5){\line(1,0){10}}
\put(35,35){\line(0,1){10}}
\put(45,35){\line(0,1){10}}
\end{picture}\quad
\begin{pspicture*}(0,0.45)(10,9)
% horizontal edges
\psline{->}(5,2)(2,2) %\uput[u](2.5,2){$a_2$}
\psline{->}(7,4)(4,4) %\uput[u](4.5,4){$a_4$}
\psline[linewidth=2.5pt]{->}(4,4)(2,4) %\uput[u](2.5,4){$c_5$}
\psline{->}(7,6)(6,6) %\uput[u](6.5,6){$a_6$}
\psline[linewidth=2.5pt]{->}(6,6)(4,6) %\uput[u](4.5,6){$c_7$}
\pscurve{->}(6,6)(4,7)(2,6) %\uput[u](2.5,6.2){$a_8$}
\psline[linewidth=2.5pt]{->}(9,8)(8,8) %\uput[u](8.5,8){$a_9$}
\psline[linewidth=2.5pt]{->}(8,8)(6,8) %\uput[u](6.5,8){$a_{10}$}
\psline{->}(6,8)(4,8) %\uput[u](4.5,8){$a_{11}$}
\psline{->}(4,8)(2,8) %\uput[u](2.5,8){$a_{12}$}

%vertical edges
\psline{->}(2,8)(2,6)
\pscurve{->}(2,6)(1.4,4)(2,2)
\psline[linewidth=2.5pt]{->}(2,4)(2,2)
\psline[linewidth=2.5pt]{->}(2,2)(2,1)
\pscurve{->}(4,8)(3.4,6)(4,4)
\psline[linewidth=2.5pt]{->}(4,6)(4,4)
\psline{->}(4,4)(4,1)
\psline[linewidth=2.5pt]{->}(6,8)(6,6)
\psline{->}(6,6)(6,3.02)
\psline{->}(8,8)(8,7)

%interior vertices
\psdots[dotstyle=*,linecolor=black](2,8)
\psdots[dotstyle=*,linecolor=black](2,6)
\psdots[dotsize=9pt 0,dotstyle=*,linecolor=black](2,4)
\psdots[dotstyle=*,linecolor=black](2,2)
\psdots[dotstyle=*,linecolor=black](4,4)
\psdots[dotsize=9pt 0,dotstyle=*,linecolor=black](4,6)
\psdots[dotstyle=*,linecolor=black](4,8)
\psdots[dotstyle=*,linecolor=black](6,8)
\psdots[dotstyle=*,linecolor=black](8,8)
\psdots[dotstyle=*,linecolor=black](6,6)

%boundary vertices
\psdots[dotstyle=*,linecolor=black](9,8)
\uput[r](9.08,8.12){\large{1}}
\psdots[dotstyle=*,linecolor=black](8,7)
\uput[r](8.08,7.12){\large{2}}
\psdots[dotstyle=*,linecolor=black](7,6)
\uput[r](7.08,6.12){\large{3}}
\psdots[dotstyle=*,linecolor=black](7,4)
\uput[r](7.08,4.12){\large{4}}
\psdots[dotstyle=*,linecolor=black](6,3)
\uput[r](6.08,3.12){\large{5}}
\psdots[dotstyle=*,linecolor=black](5,2)
\uput[r](5.08,2.12){\large{6}}
\psdots[dotstyle=*,linecolor=black](4,1)
\uput[r](4.08,1.12){\large{7}}
\psdots[dotstyle=*,linecolor=black](2,1)
\uput[r](2.08,1.12){\large{8}}
\end{pspicture*}\quad\begin{picture}(46,46)(4,0)
% Young Diagram
% horizontal lines
  \put(5,45){\line(1,0){40}}
  \put(5,35){\line(1,0){40}}
  \put(5,25){\line(1,0){30}}
  \put(5,15){\line(1,0){30}}
  \put(5,5){\line(1,0){20}}
% vertical lines
  \put(5,5){\line(0,1){40}}
  \put(15,5){\line(0,1){40}}
  \put(25,5){\line(0,1){40}}
  \put(35,15){\line(0,1){30}}
  \put(45,35){\line(0,1){10}}

% first row
  \put(7,38){\scalebox{1.7}{$+$}}
  \put(17,38){\scalebox{1.7}{$+$}}
  \put(27,38){\scalebox{1.7}{$+$}}
  \put(37,38){\scalebox{1.7}{$+$}}
% second row
  \put(7,28){\scalebox{1.7}{$+$}}
  \put(18,30){\hskip0.15cm\circle*{5}}
  \put(27,28){\scalebox{1.7}{$+$}}
% third row
  \put(8,20){\hskip0.15cm\circle*{5}}
  \put(17,18){\scalebox{1.7}{$+$}}
  \put(28,20){\hskip0.15cm\circle{5}}
% fourth row
  \put(7,8){\scalebox{1.7}{$+$}}
  \put(18,10){\hskip0.15cm\circle{5}}

\linethickness{0.7mm}
\put(25,45){\line(1,0){20}}
\put(15,35){\line(1,0){30}}
\put(5,25){\line(1,0){30}}
\put(5,15){\line(1,0){20}}
\put(5,5){\line(1,0){10}}

\put(5,5){\line(0,1){20}}
\put(15,5){\line(0,1){30}}
\put(25,15){\line(0,1){30}}
\put(35,25){\line(0,1){20}}
\put(45,35){\line(0,1){10}}
\end{picture}
\caption{Two paths in $N_D$ and their corresponding sequences of boxes in $D$.}
\label{fig:paths-network}
\end{figure}
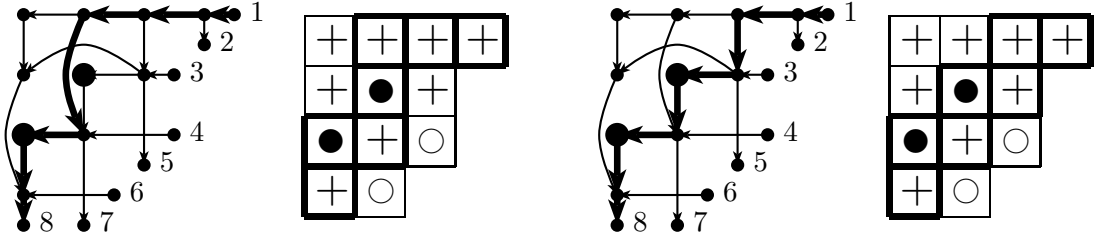

\begin{example}
Let $P_1$ and $P_2$ be the paths shown in Figure \ref{fig:paths-network}.
Then $\Psi(w(P_1)) = \Psi(a_9 a_{10} a_{11} c_5) =
\frac{-m_5}{p_2 p_4 p_9 p_{10} p_{11}}$ and
$\Psi(w(P_2)) = \Psi(a_9 a_{10} c_5 c_7) = \frac{m_5 m_7}{p_2 p_4 p_6 p_9 p_{10}}.$
\end{example}

\subsection{Applying row operations to the rescaled MR-matrix}\label{sec:row}

\begin{theorem}\label{th:row}
Let $D$ be a diagram, let $M = M_D$ be the corresponding MR-matrix,
and let $L$ be the matrix we obtain by putting $M$ into
reduced row-echelon form.
Let $W_D = (W_{ij})$ be the weight matrix associated to $D$,
and let $\Psi(W_D)$ be the matrix obtained from $W_D$ by applying
the rational map $\Psi$ to each entry.
Then $\Psi(W_D) = L$.
\end{theorem}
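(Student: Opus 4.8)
The plan is to establish $\Psi(W_D) = L$ by showing both matrices agree entrywise, exploiting the path/pseudopath interpretations already developed. First I would recall that by Theorem~\ref{th:MR-matrix}, the rescaled MR-matrix $\widetilde{M}$ has entries $\widetilde{M}_{st} = \sum_{\widetilde{P}} w(\widetilde{P})$, summing over pseudopaths from $s$ to $t$ in the network, where the pseudopath weights are Laurent monomials in the $p_i$'s and $m_j$'s. On the other side, by Definition~\ref{def:weightmatrix}, the weight matrix $W_D$ has entries $W_{st} = \pm\sum_P w(P)$, summing over \emph{directed} paths $P$ from source $s$ to boundary vertex $t$, with weights that are genuine monomials in the $a_b$'s and $c_b$'s. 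The key is Proposition~\ref{prop:Psi}, which tells us exactly what $\Psi$ does to a path weight: $\Psi(w(P)) = (-1)^{|B_3|}\bigl(\prod_{b\in B_2} m_b\bigr)/\bigl(\prod_{b\in B_1} p_b\bigr)$, where $B_1$ are the plus-boxes on $P$, $B_2$ the corner black-stone boxes, and $B_3$ the non-corner black-stone boxes.

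The central combinatorial step will be to construct a weight-matching bijection between directed paths $P$ from source $s$ to $t$ in $N_D$ and pseudopaths $\widetilde{P}$ from $s$ to $t$ in the same network, such that $\Psi(w(P)) = w(\widetilde{P})$ up to the common rescaling factor. Comparing Definition~\ref{def:pseudoweight} (pseudopath weights: $1/p_i$ for a west step on a $p_i$-edge, $p_i$ for an east step on a $p_i$-edge preceded by east, $m_j$ for a west step on an $m_j$-edge, and signs $(-1)^{b+w}$ for stones skipped over) with the formula in Proposition~\ref{prop:Psi}, I would argue that a directed path $P$ in $N_D$ — which only moves west and south — corresponds to the pseudopath that traces the same edges. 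Each plus-box that $P$ passes through horizontally contributes a west step along a $p_b$-edge, i.e.\ a factor $1/p_b$, matching the $\prod_{b\in B_1}p_b$ in the denominator of $\Psi(w(P))$; each black-stone corner box of $P$ corresponds to a west step along an $m_b$-edge giving $m_b$, matching $B_2$; and each non-corner black stone on $P$ is skipped over (vertically, since $P$ goes straight down through it) contributing a sign, matching $B_3$. Since directed paths never have an east step, the ``$p_i$ for east preceded by east'' terms never appear, consistent with $\Psi(w(P))$ being an honest Laurent monomial with $p$'s only in the denominator.

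After establishing this weight-matched bijection on individual paths, summing over all paths from $s$ to $t$ gives $\Psi(W_{st})$ (up to sign and up to the row-rescaling factor $(-1)^b\prod p_i$ of Lemma~\ref{lem:Mproperties}) equals $\widetilde{M}_{st}$, and hence $\Psi(W_D)$ and $\widetilde{M}$ agree up to a diagonal row-rescaling. But $\widetilde{M}$ is $M$ rescaled so each row has leading entry $1$, and by Lemma~\ref{lem:Mproperties} the leftmost nonzero entry of row $i_\ell$ of $M$ is in column $i_\ell$ — so $\widetilde{M}$ is already $M$ in the form where each pivot column has a $1$. Since the source columns $i_1 < \dots < i_k$ are pivot columns and $W_D$ (hence $\Psi(W_D)$) has the identity submatrix in those columns by the path definition (only the empty path connects source $i_\ell$ to boundary vertex $i_\ell$, and no path connects source $i_\ell$ to a column strictly left of it among the source columns — this needs the standard fact about which boundary vertices are sources), we conclude $\Psi(W_D) = \widetilde{M} = L$ is the reduced row-echelon form of $M$. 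I would also need to double-check that the sign conventions in Definition~\ref{def:weightmatrix} (the $(-1)^q$ with $q$ the number of sources strictly between $s$ and $t$) are exactly compatible with the sign bookkeeping coming through $\Psi$ and the rescaling of Lemma~\ref{lem:Mproperties}.

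The main obstacle I anticipate is the careful sign reconciliation: there are three separate sources of signs — the $(-1)^q$ in the definition of $W_{rs}$, the $(-1)^{b+w}$ for stones skipped by pseudopaths, and the $(-1)^b\prod p_i$ leading-entry sign from Lemma~\ref{lem:Mproperties} used in passing between $M$ and $\widetilde{M}$ — and verifying they combine correctly for every entry requires tracking exactly which stones lie between which columns relative to a given path. A secondary subtlety is making sure the bijection between directed paths and pseudopaths is genuinely well-defined and weight-preserving in the degenerate cases (empty paths, paths passing through the leftmost column, non-planar crossings), but these should follow from unwinding the definitions rather than from any new idea.
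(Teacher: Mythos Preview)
Your proposal has a genuine gap at the central step: the claimed bijection between directed paths and pseudopaths does not exist, and the conclusion $\Psi(W_D)=\widetilde{M}$ is false.

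First, directed paths in $N_D$ are \emph{not} in general pseudopaths. By Definition~\ref{def:pseudopath} a pseudopath has the form $WW^*S(EE^*S)^*E^*$, so it may never take two consecutive south steps; but a directed path in $N_D$ typically does. Conversely, pseudopaths may travel east (including east-west combination steps), so there are pseudopaths with no corresponding directed path. In the running example there are two pseudopaths from $1$ to $4$ (one using an east-west combination step), and the entry $\widetilde{M}_{13}=\frac{-p_6+p_{11}p_{12}}{p_9p_{10}p_{11}p_{12}}$ has two terms, while the weight-matrix entry in the pivot column $3$ must be~$0$. So your weight-matched bijection cannot be correct, and in particular $\Psi(W_D)\ne\widetilde{M}$.

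Second, and relatedly, $\widetilde{M}$ is \emph{not} the reduced row-echelon form of $M$. It has $1$'s in the pivot positions $(i_\ell,i_\ell)$, but the entries above those pivots are generally nonzero (e.g.\ $\widetilde{M}_{13}$, $\widetilde{M}_{14}$, $\widetilde{M}_{36}$ in Example~\ref{ex:mr-matrix}). Passing from $\widetilde{M}$ to $L$ requires genuine row operations, and this is where the paper's argument actually lives: one writes
\[
L_{st}=\widetilde{M}_{st}+\sum_{s<j_1<\dots<j_r<t}(-1)^r\,\widetilde{M}_{sj_1}\widetilde{M}_{j_1j_2}\cdots\widetilde{M}_{j_rt},
\]
interprets the right-hand side as a signed sum over \emph{concatenations} of pseudopaths, and then shows via a $2^q$-to-$1$ cancellation (using $\sum_r(-1)^r\binom{q}{r}=0$) that the only surviving terms are those whose signed union is a genuine directed path. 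The U-turn pseudopath decomposition of a directed path is the mechanism that makes this work. Your outline skips this row-reduction/cancellation step entirely, and that is the missing idea.
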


\begin{proof}
To prove Theorem \ref{th:row},
we start by considering the rescaled MR-matrix $\widetilde{M}$.
Its rows are indexed by
the set $i_1, \dots, i_k$, the set of sources of the network $N_D$,
and the leftmost nonzero entry in every row
is a $1$.  Moreover, by Lemma \ref{lem:Mproperties},
the $1$ in row $i_{\ell}$ is located in column $i_{\ell}$.
The entries of the
reduced row echelon matrix $L$ obtained from $\widetilde{M}$
are given by the formula
\begin{equation}\label{eq:L-entry}
L_{st} = \widetilde{M}_{st} + \sum_{s<j_1 < \dots < j_r < t}
(-1)^r \widetilde{M}_{s j_1} \widetilde{M}_{j_1 j_2} \dots
\widetilde{M}_{j_r t},
\end{equation}
where the sum ranges over all nonempty subsets
$\{j_1,\dots,j_r\} \subset \{i_1,\dots,i_k\}$ of sources
of the network between $s$ and $t$.

By Theorem \ref{th:MR-matrix},
the entry $\widetilde{M}_{st}$ equals
$\sum_{\widetilde{P}} w(\widetilde{P})$, where the sum is over all
pseudopaths in the network $N_D$ from the source $s$ to the boundary
vertex $t$.
Therefore the right-hand side of \eqref{eq:L-entry} can be interpreted
as a generating function for all
concatenations of pseudopaths, where the first pseudopath starts
at $s$, and the last pseudopath ends at $t$.

Let us identify a pseudopath with its collection of directed edges.
Given a set of pseudopaths on $N_D$, we define
its \emph{signed union} to be the union of directed edges
that one obtains by taking the multiset of all directed edges
in the pseudopaths, and then cancelling pairs
which traverse the same edge but in opposite directions.
We define the \emph{weight} of a set of pseudopaths
to be the product of the weights of each pseudopath in the set.

Our goal is to show that after cancellation, the only terms which
survive on the right-hand  side of \eqref{eq:L-entry} correspond
to concatenations of pseudopaths whose signed union forms a
directed path (and not merely a pseudopath) from $s$ to $t$ in $N_D$.
This will allow us to relate \eqref{eq:L-entry} to $W_{st}$,
which is defined as a sum over all paths from $s$ to $t$ in $N_D$.

\begin{definition}
A \emph{U-turn} pseudopath in a network is a pseudopath
whose sequence of steps has the form $(WW^*S)E^*$.
\end{definition}

First note that any path $P$ in a network $N_D$ has a
unique decomposition as a signed union of
U-turn pseudopaths.  Moreover, the products of the
weights of the pseudopaths is precisely the quantity
on the right-hand side of \eqref{eq:rational}.
See Figure~\ref{fig:decomposition1} for an example
of the decomposition into U-turn pseudopaths.

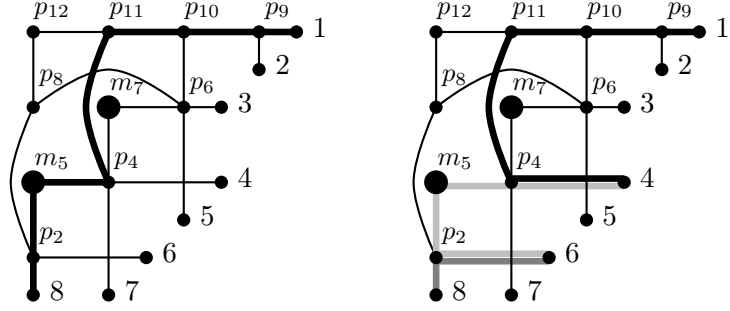
\begin{figure}[ht]
\begin{center}
\psset{unit=.5cm,dotstyle=o,dotsize=5pt 0,linewidth=0.8pt,arrowsize=3pt 2,arrowinset=0.25}
\begin{pspicture*}(0,0)(10,9)
% horizontal edges
\psline{-}(5,2)(2,2) \uput[u](2.5,2){$p_2$}
\psline{-}(7,4)(4,4) \uput[u](4.5,4){$p_4$}
\psline[linewidth=2.5pt]{-}(4,4)(2,4) \uput[u](2.5,4){$m_5$}
\psline{-}(7,6)(6,6) \uput[u](6.5,6){$p_6$}
\psline{-}(6,6)(4,6) \uput[u](4.5,6){$m_7$}
\pscurve{-}(6,6)(4,7)(2,6) \uput[u](2.5,6.2){$p_8$}
\psline[linewidth=2.5pt]{-}(9,8)(8,8) \uput[u](8.5,8){$p_9$}
\psline[linewidth=2.5pt]{-}(8,8)(6,8) \uput[u](6.5,8){$p_{10}$}
\psline[linewidth=2.5pt]{-}(6,8)(4,8) \uput[u](4.5,8){$p_{11}$}
\psline{-}(4,8)(2,8) \uput[u](2.5,8){$p_{12}$}

%vertical edges
\psline{-}(2,8)(2,6)
\pscurve{-}(2,6)(1.4,4)(2,2)
\psline[linewidth=2.5pt]{-}(2,4)(2,2)
\psline[linewidth=2.5pt]{-}(2,2)(2,1)
\pscurve[linewidth=2.5pt]{-}(4,8)(3.4,6)(4,4)
\psline{-}(4,6)(4,4)
\psline{-}(4,4)(4,1)
\psline{-}(6,8)(6,6)
\psline{-}(6,6)(6,3.02)
\psline{-}(8,8)(8,7)

%interior vertices
\psdots[dotstyle=*,linecolor=black](2,8)
\psdots[dotstyle=*,linecolor=black](2,6)
\psdots[dotsize=9pt 0,dotstyle=*,linecolor=black](2,4)
\psdots[dotstyle=*,linecolor=black](2,2)
\psdots[dotstyle=*,linecolor=black](4,4)
\psdots[dotsize=9pt 0,dotstyle=*,linecolor=black](4,6)
\psdots[dotstyle=*,linecolor=black](4,8)
\psdots[dotstyle=*,linecolor=black](6,8)
\psdots[dotstyle=*,linecolor=black](8,8)
\psdots[dotstyle=*,linecolor=black](6,6)
\psdots[dotstyle=*,linecolor=black](9,8)

%boundary vertices
\uput[r](9.08,8.12){\large{1}}
\psdots[dotstyle=*,linecolor=black](8,7)
\uput[r](8.08,7.12){\large{2}}
\psdots[dotstyle=*,linecolor=black](7,6)
\uput[r](7.08,6.12){\large{3}}
\psdots[dotstyle=*,linecolor=black](7,4)
\uput[r](7.08,4.12){\large{4}}
\psdots[dotstyle=*,linecolor=black](6,3)
\uput[r](6.08,3.12){\large{5}}
\psdots[dotstyle=*,linecolor=black](5,2)
\uput[r](5.08,2.12){\large{6}}
\psdots[dotstyle=*,linecolor=black](4,1)
\uput[r](4.08,1.12){\large{7}}
\psdots[dotstyle=*,linecolor=black](2,1)
\uput[r](2.08,1.12){\large{8}}

\end{pspicture*}\quad\begin{pspicture*}(0,0)(10,9)
% horizontal edges
\psline[linewidth=2.5pt, linecolor=lightgray]{-}(5,2.1)(2,2.1) \uput[u](2.5,2){$p_2$}
\psline[linewidth=2.5pt, linecolor=gray]{-}(5,1.9)(2,1.9)
\psline[linewidth=2.5pt]{-}(7,4.1)(4,4.1) \uput[u](4.5,4){$p_4$}
\psline[linewidth=2.5pt, linecolor=lightgray]{-}(7,3.9)(4,3.9)
\psline[linewidth=2.5pt, linecolor=lightgray]{-}(4,3.9)(2,3.9) \uput[u](2.5,4){$m_5$}
\psline{-}(7,6)(6,6) \uput[u](6.5,6){$p_6$}
\psline{-}(6,6)(4,6) \uput[u](4.5,6){$m_7$}
\pscurve{-}(6,6)(4,7)(2,6) \uput[u](2.5,6.2){$p_8$}
\psline[linewidth=2.5pt]{-}(9,8)(8,8) \uput[u](8.5,8){$p_9$}
\psline[linewidth=2.5pt]{-}(8,8)(6,8) \uput[u](6.5,8){$p_{10}$}
\psline[linewidth=2.5pt]{-}(6,8)(4,8) \uput[u](4.5,8){$p_{11}$}
\psline{-}(4,8)(2,8) \uput[u](2.5,8){$p_{12}$}

%vertical edges
\psline{-}(2,8)(2,6)
\pscurve{-}(2,6)(1.4,4)(2,2)
\psline[linewidth=2.5pt, linecolor=lightgray]{-}(2,4)(2,2)
\psline[linewidth=2.5pt, linecolor=gray]{-}(2,2)(2,1)
\pscurve[linewidth=2.5pt]{-}(4,8)(3.4,6)(4,4)
\psline{-}(4,6)(4,4)
\psline{-}(4,4)(4,1)
\psline{-}(6,8)(6,6)
\psline{-}(6,6)(6,3)
\psline{-}(8,8)(8,7)

%interior vertices
\psdots[dotstyle=*,linecolor=black](2,8)
\psdots[dotstyle=*,linecolor=black](2,6)
\psdots[dotsize=9pt 0,dotstyle=*,linecolor=black](2,4)
\psdots[dotstyle=*,linecolor=black](2,2)
\psdots[dotstyle=*,linecolor=black](4,4)
\psdots[dotsize=9pt 0,dotstyle=*,linecolor=black](4,6)
\psdots[dotstyle=*,linecolor=black](4,8)
\psdots[dotstyle=*,linecolor=black](6,8)
\psdots[dotstyle=*,linecolor=black](8,8)
\psdots[dotstyle=*,linecolor=black](6,6)
\psdots[dotstyle=*,linecolor=black](9,8)

%boundary vertices
\uput[r](9.08,8.12){\large{1}}
\psdots[dotstyle=*,linecolor=black](8,7)
\uput[r](8.08,7.12){\large{2}}
\psdots[dotstyle=*,linecolor=black](7,6)
\uput[r](7.08,6.12){\large{3}}
\psdots[dotstyle=*,linecolor=black](7,4)
\uput[r](7.08,4.12){\large{4}}
\psdots[dotstyle=*,linecolor=black](6,3)
\uput[r](6.08,3.12){\large{5}}
\psdots[dotstyle=*,linecolor=black](5,2)
\uput[r](5.08,2.12){\large{6}}
\psdots[dotstyle=*,linecolor=black](4,1)
\uput[r](4.08,1.12){\large{7}}
\psdots[dotstyle=*,linecolor=black](2,1)
\uput[r](2.08,1.12){\large{8}}

\end{pspicture*}
\end{center}
\caption{A path in $N_D$ and its decomposition into U-turn pseudopaths.}
\label{fig:decomposition1}
\end{figure}

This observation on the decomposition of paths may be generalized
to pseudopaths.  Consider
a pseudopath $\widetilde{P}$ which is not a path, and turns from south
to east precisely $q$ times (for $q \geq 1$).
Then for each $0 \leq r \leq q$,
there are ${q \choose r}$ decompositions of
$\widetilde{P}$
as a signed union of $r$ pseudopaths.  Moreover, each set of pseudopaths
forming a decomposition of $\widetilde{P}$ has the same weight.
See Figure \ref{fig:decomposition2} for an example of
all decompositions of a pseudopath as a signed union of pseudopaths.
It is easy to check that each decomposition has the same weight.
\begin{figure}[ht]
\begin{center}
\psset{unit=.4cm,dotstyle=o,dotsize=5pt 0,linewidth=2pt,arrowsize=3pt 2,arrowinset=0.25}
\begin{pspicture*}(-0.2,-0.2)(6.2,6.2)
\psline{->}(6,6)(0,6)
\psline{->}(0,6)(0,4)
\psline{->}(0,4)(2,4)
\psline{->}(2,4)(2,2)
\psline{->}(2,2)(4,2)
\psline{->}(4,2)(4,0)
\uput[r](0,1){$r=0$}
\end{pspicture*}\quad\begin{pspicture*}(-0.2,-0.2)(6.2,6.2)
\psline{->}(6,6)(0,6)
\psline{->}(0,6)(0,4)
\psline{->}(0,4.1)(6,4.1)

\psline[linecolor=lightgray]{->}(6,3.9)(2,3.9)
\psline[linecolor=lightgray]{->}(2,4)(2,2)
\psline[linecolor=lightgray]{->}(2,2)(4,2)
\psline[linecolor=lightgray]{->}(4,2)(4,0)
\uput[r](0,1){$r=1$}
\end{pspicture*}\quad\begin{pspicture*}(-0.2,-0.2)(6.2,6.2)
\psline{->}(6,6)(0,6)
\psline{->}(0,6)(0,4)
\psline{->}(0,4)(2,4)
\psline{->}(2,4)(2,2)
\psline{->}(2,2.1)(6,2.1)

\psline[linecolor=lightgray]{->}(6,1.9)(4,1.9)
\psline[linecolor=lightgray]{->}(4,2)(4,0)
\uput[r](0,1){$r=1$}
\end{pspicture*}\quad\begin{pspicture*}(-0.2,-0.2)(6.2,6.2)
\psline{->}(6,6)(0,6)
\psline{->}(0,6)(0,4)
\psline{->}(0,4.1)(6,4.1)

\psline[linecolor=lightgray]{->}(6,3.9)(2,3.9)
\psline[linecolor=lightgray]{->}(2,4)(2,2)
\psline[linecolor=lightgray]{->}(2,2.1)(6,2.1)

\psline[linecolor=darkgray]{->}(6,1.9)(4,1.9)
\psline[linecolor=darkgray]{->}(4,2)(4,0)
\uput[r](0,1){$r=2$}
\end{pspicture*}
\end{center}
\caption{An example of the $2^q$ decompositions of a path in $N_D$ into pseudopaths.}
\label{fig:decomposition2}
\end{figure}
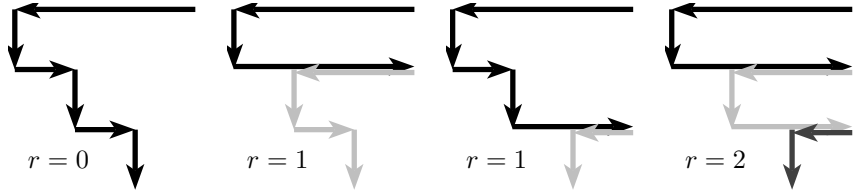

Note that since all decompositions of a pseudopath have the
same weight, and because
$${q\choose 0} - {q \choose 1} + {q \choose 2} - \dots \pm {q \choose q}=0
\text{ for }q \geq 1,$$
the net contribution of the corresponding concatenations of pseudopaths
in \eqref{eq:L-entry} is $0$.

More generally, a term on the right-hand side of \eqref{eq:L-entry}
corresponds to a concatenation of pseudopaths (whose signed union
may not be a pseudopath).  However, just as before, one may
decompose the signed union $\widetilde{P}$ in $2^q$ ways,
where $q$ is the number of times that $\widetilde{P}$ turns from
south to east.  And again, for $q\geq 1$, the
net contribution of the corresponding concatenations of pseudopaths
in \eqref{eq:L-entry} is $0$.
Therefore when one interprets the right-hand side of \eqref{eq:L-entry}
as a sum over concatenations of pseudopaths, the only terms that
are not cancelled are the terms corresponding to concatenations
of pseudopaths whose signed union is a directed path.

\begin{example}
In Figure \ref{fig:cancelling-stacks}, the
left and middle diagrams show a pseudopath from $1$ to $5$,
and a concatenation
of two pseudopaths from $1$ to $5$, whose weights cancel each other.  The right
diagram shows a path from $1$ to $5$, written as a signed union of two pseudopaths,
which will not be cancelled by any other term.
\end{example}

It is a simple exercise to verify that the absolute value of
the weight of such a
concatenation is precisely the absolute value of the right-hand side of
\eqref{eq:rational}.  We also need to verify that the signs
agree.   Once we do this, then
since $W_{st}$ is the sum of all weights of paths in $N_D$
from $s$ to $t$,
$\Psi(W_{st})$ equals the corresponding term in
the expression for $L_{st}$ in
\eqref{eq:L-entry}, so the proof is done.

We now check that the signs agree.  More specifically,
consider a concatenation of $(r+1)$ pseudopaths whose
signed union is a directed path $P$.
Using Definition \ref{def:pseudoweight},
the total sign associated to the concatenation of
pseudopaths from  \eqref{eq:L-entry}
is $(-1)^{r+b+w}$, where
$b$ (respectively $w$) is the number of black (resp. white)
stones that $P$ skips over in the horizontal (resp. vertical)
direction.  (Note that for the purpose of computing $b$ and $w$,
we can count stones skipped by $P$ here, as opposed to the set
of pseudopaths whose signed union is $P$.)
Meanwhile, using
Definition \ref{def:weightmatrix} and Proposition \ref{prop:Psi},
the total sign associated to the directed path $P$
in the expression $\Psi(W_{st})$ is
$(-1)^{q+|B_3|}$, where $q$ is the number of sources
in the network which are strictly between $s$ and $t$, and
$|B_3|$ is the number of non-corner black stones in the boxes
of the Go-diagram which $P$ traces out.
We need to show that
$(-1)^{r+b+w} = (-1)^{q+|B_3|}$.

Note that $|B_3|$ is the number of
black stones skipped either vertically
or horizontally by $P$.
Let $b_v$ (resp. $w_v$) denote the number of black stones
(resp. white stones) skipped vertically by $P$.
Similarly, let $b_h$ (resp. $w_h$) denote the number of black stones
(resp. white stones) skipped horizontally by $P$.
With this notation, we need to show that
$(-1)^{b_h+w_v+r} = (-1)^{b_h+b_v+q}$, i.e. that
$(-1)^{w_v+r} = (-1)^{b_v+q}$.

To prove this, we will show that $q-r = w_v+b_v$.
Note that $q-r$ is the number of sources strictly between
$s$ and $t$ which are \emph{not} sources of any pseudopath
in the pseudopath decomposition of $P$, i.e. which are
not in the set $\{j_1,\dots,j_r\}$ from
\eqref{eq:L-entry}.
Recall that the
pseudopath decomposition of $P$ is a U-turn pseudopath decomposition;
therefore, a source between $s$ and $t$ lies in
$\{j_1,\dots,j_r\}$ if and only if $P$ has a vertical edge
ending at a $+$ in this row.  Otherwise $P$ skips
a black or white stone in this row.  This proves
that $q-r = w_v+b_v$, and hence completes the proof of
Theorem \ref{th:row}.
\end{proof}

We have now shown that after an invertible transformation of
the parameters,
our network parameterization of
$\mathcal{R}_D$ coincides with the corresponding MR-parameterization
of the projected Deodhar component $\mathcal{P}_{\v,\w}$.
Combining this result with
Proposition \ref{p:parameterization} yields
Theorem \ref{th:main}.

Our proof yields the following statement.
\begin{corollary}\label{cor:2params}
Let $D$ be the Go-diagram associated to the distinguished subexpression
$\v$ of $\w$.  Then $\mathcal{R}_D = \mathcal{P}_{\v,\w}$ as subsets
of $Gr_{k,n}$.  Furthermore,
$$Gr_{k,n} = \bigsqcup_D \mathcal{R}_D,$$ where the union is over all
Go-diagrams $D$ contained in a $k \times (n-k)$ rectangle.
\end{corollary}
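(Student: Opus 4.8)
The plan is to deduce both statements from Theorem~\ref{th:row} and the decomposition~\eqref{e:ProjDeoDecomp}, with only combinatorial bookkeeping in between; the analytic content has already been dispatched in the proof of Theorem~\ref{th:row}, so nothing further needs to be computed.

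First I would prove $\mathcal{R}_D = \mathcal{P}_{\v,\w}$ as subsets of $Gr_{k,n}$. Say $D$ has $t$ pluses and $u$ black stones, so $t = |J^{+}_\v|$ and $u = |J^{\bullet}_\v|$. By Proposition~\ref{p:parameterization} followed by the projection $\pi_k$, the MR-matrix $M = M_D$ is a matrix representative of $\mathcal{P}_{\v,\w}$ as its parameters $(p_i,m_j)$ range over $(\K^*)^t \times \K^u$; by Lemma~\ref{lem:Mproperties} the rows of $M$ are linearly independent, so putting $M$ into reduced row-echelon form (which does not change the point of $Gr_{k,n}$ it represents) produces the well-defined matrix $L$ of Theorem~\ref{th:row}, and $L$ sweeps out exactly $\mathcal{P}_{\v,\w}$ as $(p_i,m_j)$ vary. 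On the other hand $\mathcal{R}_D$ is by definition the locus swept out by the weight matrix $W_D$ as the $a_i$ range over $\K^*$ and the $c_j$ over $\K$; note that both $M$ and $W_D$ have rows indexed by the sources $i_1 < \dots < i_k = I(w)$ of $N_D$. Let $\phi$ be the automorphism of $(\K^*)^t \times \K^u$ induced by the ring isomorphism $\Psi = \Psi_D$ (the Remark after Definition~\ref{def:rationalmap}). Since $\Psi$ is a ring homomorphism, $\Psi(W_D)$ evaluated at $(p,m)$ equals $W_D$ evaluated at $\phi(p,m)$, and by Theorem~\ref{th:row} this equals $L$ evaluated at $(p,m)$; hence $W_D \circ \phi = L$ as maps into $Gr_{k,n}$. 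Because $\phi$ is surjective, the image of $W_D$ equals the image of $L$, i.e.\ $\mathcal{R}_D = \mathcal{P}_{\v,\w}$.

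For the disjoint union I would apply~\eqref{e:ProjDeoDecomp}, choosing for each $w \in W^k$ the reduced expression $\w$ arising from the standard reading order of $\lambda_w$; this gives $Gr_{k,n} = \bigsqcup_{w \in W^k} \bigsqcup_{\v \prec \w} \mathcal{P}_{\v,\w}$. As recalled in Section~\ref{Deodhar-combinatorics} (and Remark~\ref{rem:independence}), for fixed $w$ the distinguished subexpressions $\v \prec \w$ are in bijection with the Go-diagrams of shape $\lambda_w$, and as $w$ ranges over $W^k$ the partition $\lambda_w$ ranges over all partitions in the $k \times (n-k)$ rectangle; hence the pairs $(\v,\w)$ are in bijection with the Go-diagrams $D$ contained in the rectangle. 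Substituting $\mathcal{R}_D = \mathcal{P}_{\v,\w}$ into this decomposition yields $Gr_{k,n} = \bigsqcup_D \mathcal{R}_D$.

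The step most in need of care — there is no genuine obstacle, since no new computation is required — is the matching of combinatorial indexing data: one must check that the number of pluses (resp.\ black stones) of $D$ equals $|J^{+}_\v|$ (resp.\ $|J^{\bullet}_\v|$), that the identification of Go-diagrams with distinguished subexpressions is independent of the chosen reading order (Remark~\ref{rem:independence}), and that the projected Deodhar decomposition is independent of the chosen reduced words $\w$ (Remark~\ref{rem:dependence}, \cite[Proposition~4.16]{KW2}), so that $\bigsqcup_D \mathcal{R}_D$ is a decomposition of $Gr_{k,n}$ that does not depend on these choices.
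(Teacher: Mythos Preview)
Your proposal is correct and follows exactly the approach the paper intends: the paper simply states that the corollary is yielded by the preceding proof of Theorem~\ref{th:row} combined with Proposition~\ref{p:parameterization} and the decomposition~\eqref{e:ProjDeoDecomp}, and you have carefully unpacked precisely those steps. Your additional care with the combinatorial bookkeeping (matching $t,u$ with $|J^+_\v|,|J^\bullet_\v|$, invoking Remarks~\ref{rem:independence} and~\ref{rem:dependence}) is appropriate and does not deviate from the paper's argument.
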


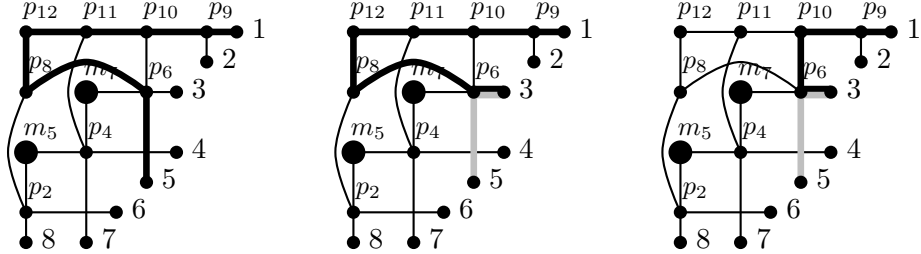
\begin{figure}[ht]
\begin{center}
\psset{unit=.4cm,dotstyle=o,dotsize=5pt 0,linewidth=0.8pt,arrowsize=3pt 2,arrowinset=0.25}
\begin{pspicture*}(0,0)(10,9)
% horizontal edges
\psline{-}(5,2)(2,2) \uput[u](2.5,2){$p_2$}
\psline{-}(7,4)(4,4) \uput[u](4.5,4){$p_4$}
\psline{-}(4,4)(2,4) \uput[u](2.5,4){$m_5$}
\psline{-}(7,6)(6,6) \uput[u](6.5,6){$p_6$}
\psline{-}(6,6)(4,6) \uput[u](4.5,6){$m_7$}
\pscurve[linewidth=2.5pt]{-}(6,6)(4,7)(2,6) \uput[u](2.5,6.2){$p_8$}
\psline[linewidth=2.5pt]{-}(9,8)(8,8) \uput[u](8.5,8){$p_9$}
\psline[linewidth=2.5pt]{-}(8,8)(6,8) \uput[u](6.5,8){$p_{10}$}
\psline[linewidth=2.5pt]{-}(6,8)(4,8) \uput[u](4.5,8){$p_{11}$}
\psline[linewidth=2.5pt]{-}(4,8)(2,8) \uput[u](2.5,8){$p_{12}$}

%vertical edges
\psline[linewidth=2.5pt]{-}(2,8)(2,6)
\pscurve{-}(2,6)(1.4,4)(2,2)
\psline{-}(2,4)(2,2)
\psline{-}(2,2)(2,1)
\pscurve{-}(4,8)(3.4,6)(4,4)
\psline{-}(4,6)(4,4)
\psline{-}(4,4)(4,1)
\psline{-}(6,8)(6,6)
\psline[linewidth=2.5pt]{-}(6,6)(6,3)
\psline{-}(8,8)(8,7)

%interior vertices
\psdots[dotstyle=*,linecolor=black](2,8)
\psdots[dotstyle=*,linecolor=black](2,6)
\psdots[dotsize=9pt 0,dotstyle=*,linecolor=black](2,4)
\psdots[dotstyle=*,linecolor=black](2,2)
\psdots[dotstyle=*,linecolor=black](4,4)
\psdots[dotsize=9pt 0,dotstyle=*,linecolor=black](4,6)
\psdots[dotstyle=*,linecolor=black](4,8)
\psdots[dotstyle=*,linecolor=black](6,8)
\psdots[dotstyle=*,linecolor=black](8,8)
\psdots[dotstyle=*,linecolor=black](6,6)
\psdots[dotstyle=*,linecolor=black](9,8)

%boundary vertices
\uput[r](9.08,8.12){\large{1}}
\psdots[dotstyle=*,linecolor=black](8,7)
\uput[r](8.08,7.12){\large{2}}
\psdots[dotstyle=*,linecolor=black](7,6)
\uput[r](7.08,6.12){\large{3}}
\psdots[dotstyle=*,linecolor=black](7,4)
\uput[r](7.08,4.12){\large{4}}
\psdots[dotstyle=*,linecolor=black](6,3)
\uput[r](6.08,3.12){\large{5}}
\psdots[dotstyle=*,linecolor=black](5,2)
\uput[r](5.08,2.12){\large{6}}
\psdots[dotstyle=*,linecolor=black](4,1)
\uput[r](4.08,1.12){\large{7}}
\psdots[dotstyle=*,linecolor=black](2,1)
\uput[r](2.08,1.12){\large{8}}

\end{pspicture*}\quad\begin{pspicture*}(0,0)(10,9)
% horizontal edges
\psline{-}(5,2)(2,2) \uput[u](2.5,2){$p_2$}
\psline{-}(7,4)(4,4) \uput[u](4.5,4){$p_4$}
\psline{-}(4,4)(2,4) \uput[u](2.5,4){$m_5$}
\psline[linewidth=2.5pt]{-}(7,6.1)(6,6.1) \uput[u](6.5,6){$p_6$}
\psline[linewidth=2.5pt, linecolor=lightgray]{-}(7,5.9)(6,5.9)
\psline{-}(6,6)(4,6) \uput[u](4.5,6){$m_7$}
\pscurve[linewidth=2.5pt]{-}(6,6)(4,7)(2,6) \uput[u](2.5,6.2){$p_8$}
\psline[linewidth=2.5pt]{-}(9,8)(8,8) \uput[u](8.5,8){$p_9$}
\psline[linewidth=2.5pt]{-}(8,8)(6,8) \uput[u](6.5,8){$p_{10}$}
\psline[linewidth=2.5pt]{-}(6,8)(4,8) \uput[u](4.5,8){$p_{11}$}
\psline[linewidth=2.5pt]{-}(4,8)(2,8) \uput[u](2.5,8){$p_{12}$}

%vertical edges
\psline[linewidth=2.5pt]{-}(2,8)(2,6)
\pscurve{-}(2,6)(1.4,4)(2,2)
\psline{-}(2,4)(2,2)
\psline{-}(2,2)(2,1)
\pscurve{-}(4,8)(3.4,6)(4,4)
\psline{-}(4,6)(4,4)
\psline{-}(4,4)(4,1)
\psline{-}(6,8)(6,6)
\psline[linewidth=2.5pt, linecolor=lightgray]{-}(6,6)(6,3)
\psline{-}(8,8)(8,7)

%interior vertices
\psdots[dotstyle=*,linecolor=black](2,8)
\psdots[dotstyle=*,linecolor=black](2,6)
\psdots[dotsize=9pt 0,dotstyle=*,linecolor=black](2,4)
\psdots[dotstyle=*,linecolor=black](2,2)
\psdots[dotstyle=*,linecolor=black](4,4)
\psdots[dotsize=9pt 0,dotstyle=*,linecolor=black](4,6)
\psdots[dotstyle=*,linecolor=black](4,8)
\psdots[dotstyle=*,linecolor=black](6,8)
\psdots[dotstyle=*,linecolor=black](8,8)
\psdots[dotstyle=*,linecolor=black](6,6)
\psdots[dotstyle=*,linecolor=black](9,8)

%boundary vertices
\uput[r](9.08,8.12){\large{1}}
\psdots[dotstyle=*,linecolor=black](8,7)
\uput[r](8.08,7.12){\large{2}}
\psdots[dotstyle=*,linecolor=black](7,6)
\uput[r](7.08,6.12){\large{3}}
\psdots[dotstyle=*,linecolor=black](7,4)
\uput[r](7.08,4.12){\large{4}}
\psdots[dotstyle=*,linecolor=black](6,3)
\uput[r](6.08,3.12){\large{5}}
\psdots[dotstyle=*,linecolor=black](5,2)
\uput[r](5.08,2.12){\large{6}}
\psdots[dotstyle=*,linecolor=black](4,1)
\uput[r](4.08,1.12){\large{7}}
\psdots[dotstyle=*,linecolor=black](2,1)
\uput[r](2.08,1.12){\large{8}}

\end{pspicture*}\quad\begin{pspicture*}(0,0)(10,9)
% horizontal edges
\psline{-}(5,2)(2,2) \uput[u](2.5,2){$p_2$}
\psline{-}(7,4)(4,4) \uput[u](4.5,4){$p_4$}
\psline{-}(4,4)(2,4) \uput[u](2.5,4){$m_5$}
\psline[linewidth=2.5pt]{-}(7,6.1)(6,6.1) \uput[u](6.5,6){$p_6$}
\psline[linewidth=2.5pt, linecolor=lightgray]{-}(7,5.9)(6,5.9)
\psline{-}(6,6)(4,6) \uput[u](4.5,6){$m_7$}
\pscurve{-}(6,6)(4,7)(2,6) \uput[u](2.5,6.2){$p_8$}
\psline[linewidth=2.5pt]{-}(9,8)(8,8) \uput[u](8.5,8){$p_9$}
\psline[linewidth=2.5pt]{-}(8,8)(6,8) \uput[u](6.5,8){$p_{10}$}
\psline{-}(6,8)(4,8) \uput[u](4.5,8){$p_{11}$}
\psline{-}(4,8)(2,8) \uput[u](2.5,8){$p_{12}$}

%vertical edges
\psline{-}(2,8)(2,6)
\pscurve{-}(2,6)(1.4,4)(2,2)
\psline{-}(2,4)(2,2)
\psline{-}(2,2)(2,1)
\pscurve{-}(4,8)(3.4,6)(4,4)
\psline{-}(4,6)(4,4)
\psline{-}(4,4)(4,1)
\psline[linewidth=2.5pt]{-}(6,8)(6,6)
\psline[linewidth=2.5pt, linecolor=lightgray]{-}(6,6)(6,3)
\psline{-}(8,8)(8,7)

%interior vertices
\psdots[dotstyle=*,linecolor=black](2,8)
\psdots[dotstyle=*,linecolor=black](2,6)
\psdots[dotsize=9pt 0,dotstyle=*,linecolor=black](2,4)
\psdots[dotstyle=*,linecolor=black](2,2)
\psdots[dotstyle=*,linecolor=black](4,4)
\psdots[dotsize=9pt 0,dotstyle=*,linecolor=black](4,6)
\psdots[dotstyle=*,linecolor=black](4,8)
\psdots[dotstyle=*,linecolor=black](6,8)
\psdots[dotstyle=*,linecolor=black](8,8)
\psdots[dotstyle=*,linecolor=black](6,6)
\psdots[dotstyle=*,linecolor=black](9,8)

%boundary vertices
\uput[r](9.08,8.12){\large{1}}
\psdots[dotstyle=*,linecolor=black](8,7)
\uput[r](8.08,7.12){\large{2}}
\psdots[dotstyle=*,linecolor=black](7,6)
\uput[r](7.08,6.12){\large{3}}
\psdots[dotstyle=*,linecolor=black](7,4)
\uput[r](7.08,4.12){\large{4}}
\psdots[dotstyle=*,linecolor=black](6,3)
\uput[r](6.08,3.12){\large{5}}
\psdots[dotstyle=*,linecolor=black](5,2)
\uput[r](5.08,2.12){\large{6}}
\psdots[dotstyle=*,linecolor=black](4,1)
\uput[r](4.08,1.12){\large{7}}
\psdots[dotstyle=*,linecolor=black](2,1)
\uput[r](2.08,1.12){\large{8}}

\end{pspicture*}
\end{center}
\caption{The three possible concatenations of pseudopaths  from $1$ to $5$.}
\label{fig:cancelling-stacks}
\end{figure}

Finally we explain how our proof also yields 
Corollary \ref{cor:coincide}.

\begin{proof}
We first note that the Marsh-Rietch parameterizations of Deodhar components
restrict to parameterizations of cells in the totally non-negative part
of the complete flag variety (using Lusztig's definition of total positivity), 
if $\v$ is a positive distinguished
subexpression of $\w$, and the parameters $p_i$ range over 
$\R_{>0}$ \cite{MR}.

Our proof of Theorem \ref{th:main} shows that
if one takes a (particular) Marsh-Rietsch parameterization of a cell 
in the non-negative part of the complete flag variety, 
then projects it to $(Gr_{k,n})_{\geq}$ (using Lusztig's definition
of total positivity),
and then uses an invertible transformation of variables, one
gets a network parameterizations of a cell of $(Gr_{k,n})_{\geq 0}$
(using Postnikov's definition of total positivity).
It follows that Lusztig's definition of $(Gr_{k,n})_{\geq 0}$ coincides
with Postnikov's definition of $(Gr_{k,n})_{\geq 0}$, and moreover that
the cell decompositions coincide.
\end{proof}

%%%%%%%%%%%%%%%%  Characterization by minors   %%%%%%%%%%%%%%%%%%%
\section{A characterization of Deodhar components in terms of
Pl\"ucker coordinates}
\label{sec:characterization}

In this section we
characterize Deodhar components in the Grassmannian by
a list of vanishing and nonvanishing Pl\"ucker coordinates. Our main result in this section is Theorem~\ref{th:characterize}.
The proof uses results of Kodama and the second author
from \cite{KW2}, which gave formulas for Pl\"ucker coordinates
of Deodhar components.

\subsection{Pl\"ucker coordinates of Deodhar components
in terms of the MR parameters} \label{Plucker}

Consider the Deodhar component
$\mathcal P_{\v,\w} \subset Gr_{k,n}$, where
$\w$ is a reduced expression for $w \in W^k$ and $\v \prec \w$.
In this section we will review some formulas from \cite{KW2}
for the Pl\"ucker coordinates
of the elements of $\mathcal P_{\v,\w}$, in terms of the parameters
which Marsh and Rietsch \cite{MR} used
to define $G_{\v,\w}$.

\subsubsection{Formulas for Pl\"ucker coordinates}

\begin{theorem}\label{p:maxmin}\cite[Lemma 5.1 and Theorem 5.2]{KW2}
Let $\w=s_{i_1} \dots s_{i_m}$ be a reduced expression for $w \in W^k$ and $\v \prec \w$ be a distinguished subexpression for $v$.
Let $A = \pi_k(g)\in \mathcal P_{\v,\w}$ for any $g \in G_{\v,\w}$.
Then the lexicographically minimal and maximal
nonzero Pl\"ucker coordinates of $A$ are
$\Delta_I$ and $\Delta_{I'}$, where
$I = w\{n,n-1,\dots,n-k+1\}$ and
$I' = v\{n,n-1,\dots,n-k+1\}$.
If we write $g = g_1 \dots g_m$ as in Definition \ref{d:factorization}, then
\begin{equation}
\Delta_I(A) = (-1)^{|J_{\v}^{\bullet}|} \prod_{i\in J_{\v}^{+}} p_i\qquad  \text{ and }\qquad
\Delta_{I'}(A) = 1.
\end{equation}
\end{theorem}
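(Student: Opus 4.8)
The plan is to carry out the computation with the concrete matrix representative $M = M_D$ of $A$, namely the MR-matrix obtained by projecting the factorization $g = g_1 g_2\cdots g_m$ of Definition~\ref{d:factorization}, and to read off the two extreme Pl\"ucker coordinates from the structure of $M$ together with the Bruhat data of $\mathcal R_{v,w} = B^+\dot w B^+\cap B^-\dot v B^+$. So there are really four things to prove: that $I = w\{n,\dots,n-k+1\}$ is the lexicographically minimal nonzero coordinate and $\Delta_I(A) = (-1)^{|J^{\bullet}_{\v}|}\prod_{i\in J^{+}_{\v}} p_i$, and that $I' = v\{n,\dots,n-k+1\}$ is the lexicographically maximal nonzero coordinate and $\Delta_{I'}(A) = 1$.

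For the minimal coordinate, Lemma~\ref{lem:Mproperties} does essentially everything. It says the leftmost nonzero entry of row $i_\ell$ of $M$ lies in column $i_\ell$, where $\{i_1 < \dots < i_k\} = I(w)$; since $i_1 < \dots < i_k$ these leading entries form a genuine staircase, so the pivot columns of $M$ are exactly $I(w)$ and hence $I(w)$ is the lexicographically minimal basis of the matroid of $A$, i.e.\ $I = I(w)$. Moreover every entry of row $i_\ell$ strictly to the left of column $i_\ell$ vanishes, so the submatrix of $M$ on the column set $I(w)$ is upper triangular with the leading entries on the diagonal, whence
\[
\Delta_I(A) = \Delta_{I(w)}(M) = \prod_{\ell=1}^{k} M_{i_\ell, i_\ell} = (-1)^{|J^{\bullet}_{\v}|}\prod_{i\in J^{+}_{\v}} p_i ,
\]
using the value of each $M_{i_\ell, i_\ell}$ from Lemma~\ref{lem:Mproperties} and summing the per-row counts of black stones and $p_i$-boxes over all $k$ rows.

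For the maximal coordinate I would first identify \emph{which} coordinate it is from the cell structure: since $g\cdot B^+ \in B^-\dot v B^+$, the point $A = \pi_k(g\cdot B^+)$ lies in the opposite Schubert cell in $Gr_{k,n}$ determined by $v$, and that cell is exactly the locus where $\Delta_{I(v)}$, $I(v) = v\{n,\dots,n-k+1\}$ (a well-defined function of the coset $vW_k$), is the lexicographically maximal nonzero coordinate; hence $I' = I(v)$. The remaining assertion $\Delta_{I(v)}(A) = 1$ is a normalization statement, and I see two routes. One is induction on $m$ through the MR factorization: for $m = 0$, $g = 1$ and $A$ is the coordinate point with unique nonzero coordinate $1$; in the inductive step, right multiplication of the MR-matrix of the truncated pair by $g_m \in \{\dot s_{i_m},\, y_{i_m}(p_m),\, x_{i_m}(m_m)\dot s_{i_m}^{-1}\}$ changes $\Delta_{I(v_{(m)})}$ by at most a sign, and distinguishedness — which is precisely what governs whether $v_{(m-1)}s_{i_m}$ is longer or shorter than $v_{(m-1)}$, hence which columns get mixed and whether $I(v)$ moves — together with the signs built into $\dot s_{i_m}^{\pm 1}$ forces the value to stay $1$. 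The other route stays inside this paper: by Theorem~\ref{th:row} the reduced row echelon form of $M$ is $L = \Psi(W_D)$, so $\Delta_{I(v)}(M) = \Delta_{I(v)}(L)\cdot\Delta_{I(w)}(M)$ once one knows $\Delta_{I(w)}(M)$ from the previous paragraph; then apply the Lindstr\"om--Gessel--Viennot lemma to $N_D$ to see that $\Delta_{I(v)}(W_D)$ is the maximal nonzero minor of $W_D$ and equals the weight of the unique family of vertex-disjoint paths ending at $I(v)$ — a monomial in the $a_b$ and $c_b$ — and check via Proposition~\ref{prop:Psi} and the explicit form of $\Psi$ that its image is $(-1)^{|J^{\bullet}_{\v}|}\bigl(\prod_{i\in J^{+}_{\v}} p_i\bigr)^{-1}$, which cancels $\Delta_{I(w)}(M)$ to give $1$.

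The main obstacle, on either route, is exactly the proof that $\Delta_{I(v)}(A) = 1$: it is a sign-and-monomial bookkeeping problem, requiring either a three-way case analysis (white stone $m\in J^{\circ}_{\v}$, plus $m\in J^{+}_{\v}$, black stone $m\in J^{\bullet}_{\v}$) through the factorization with distinguishedness used to exclude the configurations that would alter the relevant minor, or the identification of the unique vertex-disjoint path family landing at $I(v)$ in $N_D$ together with the $\Psi$-computation. Everything else — the identification of $I$ and $I'$ and the value of $\Delta_I$ — is routine given Lemma~\ref{lem:Mproperties} and the standard description of Schubert and opposite Schubert cells. Combining all of this with Proposition~\ref{p:parameterization}, so that $A$ genuinely ranges over all of $\mathcal P_{\v,\w}$, completes the proof.
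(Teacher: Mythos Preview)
This theorem is not proved in the paper: it is imported wholesale from \cite{KW2} (their Lemma~5.1 and Theorem~5.2) and used as a black box in Section~\ref{sec:characterization}. So there is no ``paper's own proof'' to compare against, and your proposal is really an independent argument for a cited result.

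On the merits, your treatment of the minimal coordinate is clean and correct: Lemma~\ref{lem:Mproperties} (itself cited from \cite{KW2}) gives the echelon structure, so the pivot set is $I(w)$, the submatrix on those columns is upper triangular, and the determinant is the product of the leading entries. This is the natural argument.

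For the maximal coordinate, your identification of $I'$ via the opposite Bruhat cell $B^-\dot v\cdot B^+$ is the standard fact. For the normalization $\Delta_{I'}(A)=1$, your Route~1 (induction on $m$ through the factorization, with a three-way case split on $J^{\circ}_{\v}$, $J^{+}_{\v}$, $J^{\bullet}_{\v}$) is the direct computation and is almost certainly how \cite{KW2} proceeds; you have correctly located the only nontrivial content in tracking how each factor $g_m$ acts on the relevant minor and why distinguishedness prevents unwanted interactions. Your Route~2 is logically sound within this paper --- Theorem~\ref{th:row} does not depend on Theorem~\ref{p:maxmin} --- but it is methodologically backwards: you would be invoking the paper's main theorem to reprove an auxiliary result that the paper imports from earlier work, and you still owe the identification of the unique vertex-disjoint path family in $N_D$ landing at $I(v)$ and the $\Psi$-computation, neither of which is entirely immediate. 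If you want a self-contained proof, finish Route~1.
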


\begin{remark}
If we write $I = \{i_1,\dots,i_k\}$, then
$I' = vw^{-1}\{i_1,\dots,i_k\}$.
\end{remark}

\begin{definition}\label{inout}\cite[Definition 5.4]{KW2}
Let $W = \Sym_n$,
let $\w=s_{i_1} \dots s_{i_m}$ be a reduced expression for $w \in W^k$ and
choose $\v \prec \w$.   This determines a Go-diagram $D$
of shape $\lambda=\lambda_w$.
Let $b$ be any box of $D$.  Note that the set of all boxes of $D$ which are weakly southeast
of $b$ forms a Young diagram $\lambda_b^{\In}$; also the complement of $\lambda_b^{\In}$ in $\lambda$ is a Young diagram
which we call $\lambda_b^{\Out}$ (see Example \ref{ex:out} below).  By looking at the restriction of $\w$ to the positions corresponding
to boxes of $\lambda_b^{\In}$, we obtain a reduced expression $\w_b^{\In}$ for some permutation
$w_b^{\In}$, together with a distinguished subexpression $\v_b^{\In}$ for some permutation $v_b^{\In}$.
Similarly, by using the positions corresponding to boxes of $\lambda_b^{\Out}$, we obtain
$\w_b^{\Out}$, $w_b^{\Out}$, $\v_b^{\Out}$, and $v_b^{\Out}$.  When the box $b$ is understood,
we will often omit the subscript $b$.

For any box $b$, note that it is always possible to choose a reading order of $\lambda=\lambda_w$ which orders
all the boxes of $\lambda^{\Out}$ after those of $\lambda^{\In}$.  We can then adjust $\w$ accordingly;
this does not affect whether the corresponding
expression $\v$ is distinguished.
Having chosen such a reading order, we can then write
$\w = \w^{\In} \w^{\Out}$ and $\v = \v^{\In} \v^{\Out}$.  We then use
$g^{\In}$ and $g^{\Out}$ to denote the corresponding factors of
$g \in G_{\v,\w}$.
We define $J^{+}_{\v^{\Out}}$ to be the subset of $J^{+}_{\v}$
coming from the factors of $\v$ contained in $\v^{\Out}$.  Similarly,
for $J^{\circ}_{\v^{\Out}}$
and $J^{\bullet}_{\v^{\Out}}$.
\end{definition}

\begin{example}\label{ex:out}
Let $W =\Sym_7$ and $\w = s_4 s_5 s_2 s_3 s_4 s_6 s_5 s_1 s_2 s_3 s_4$
be a reduced expression for $w \in W^3$.
Let $\v = s_4 s_5 1 1 s_4 1 s_5 s_1 1 1 s_4$ be a distinguished subexpression.
So $w = (3,5,6,7,1,2,4)$ and $v=(2,1,3,4,6,5,7)$.
We can represent this data by the poset $\lambda_w$ and the corresponding
 Go-diagram:\\
%\smallskip
\setlength{\unitlength}{0.7mm}
\begin{center}
  \begin{picture}(60,30)

% Young Diagram
% horizontal lines
  \put(5,35){\line(1,0){40}}
  \put(5,25){\line(1,0){40}}
  \put(5,15){\line(1,0){40}}
  \put(5,5){\line(1,0){30}}
% vertical lines
  \put(5,5){\line(0,1){30}}
  \put(15,5){\line(0,1){30}}
  \put(25,5){\line(0,1){30}}
  \put(35,5){\line(0,1){30}}
  \put(45,15){\line(0,1){20}}

% first row
  \put(7,28){$s_4$}
  \put(17,28){$s_3$}
  \put(27,28){$s_2$}
  \put(37,28){$s_1$}
% second row
  \put(7,18){$s_5$}
  \put(17,18){$s_4$}
  \put(27,18){$s_3$}
  \put(37,18){$s_2$}
% third row
  \put(7,8){$s_6$}
  \put(17,8){$s_5$}
  \put(27,8){$s_4$}

  \end{picture}
  \qquad
  \begin{picture}(50,30)

% Young Diagram
% horizontal lines
  \put(5,35){\line(1,0){40}}
  \put(5,25){\line(1,0){40}}
  \put(5,15){\line(1,0){40}}
  \put(5,5){\line(1,0){30}}
% vertical lines
  \put(5,5){\line(0,1){30}}
  \put(15,5){\line(0,1){30}}
  \put(25,5){\line(0,1){30}}
  \put(35,5){\line(0,1){30}}
  \put(45,15){\line(0,1){20}}

% first row
  \put(10,30){\circle*{5}}
  \put(17,28){\scalebox{1.7}{$+$}}
  \put(27,28){\scalebox{1.7}{$+$}}
  \put(40,30){\circle{5}}
 % second row
   \put(10,20){\circle*{5}}
   \put(20,20){\circle{5}}
   \put(27,18){\scalebox{1.7}{$+$}}
   \put(37,18){\scalebox{1.7}{$+$}}

% third row
  \put(30,10){\circle{5}}
  \put(20,10){\circle{5}}
  \put(7,8){\scalebox{1.7}{$+$}}

  \end{picture}
%  \qquad
\end{center}

Let $b$ be the box of the Young diagram which is in the second row
and the second column (counting from left to right).
Then the diagram below shows: the
boxes of
$\lambda^{\In}$ and
$\lambda^{\Out}$;
a reading order which
puts the boxes of $\lambda^{\Out}$ after those of $\lambda^{\In}$; and
the corresponding labeled Go-diagram.
Using this reading order,
$\w^{\In} = s_4 s_5 s_2 s_3 s_4$, $\w^{\Out} = s_6 s_5 s_1 s_2 s_3 s_4$,
$\v^{\In} = s_4 s_5 1 1 s_4$, and $\v^{\Out} = 1 s_5 s_1 1 1 s_4$.
\setlength{\unitlength}{0.8mm}
\begin{center}
  \begin{picture}(60,40)

% Young Diagram
% horizontal lines
  \put(5,35){\line(1,0){40}}
  \put(5,25){\line(1,0){40}}
  \put(5,15){\line(1,0){40}}
  \put(5,5){\line(1,0){30}}
% vertical lines
  \put(5,5){\line(0,1){30}}
  \put(15,5){\line(0,1){30}}
  \put(25,5){\line(0,1){30}}
  \put(35,5){\line(0,1){30}}
  \put(45,15){\line(0,1){20}}

% first row
  \put(6,28){out}
  \put(16,28){out}
  \put(26,28){out}
  \put(36,28){out}
% second row
  \put(6,18){out}
  \put(18,18){in}
  \put(28,18){in}
  \put(38,18){in}
% third row
  \put(6,8){out}
  \put(18,8){in}
  \put(28,8){in}
  \end{picture}
  \begin{picture}(60,40)
% Young Diagram
% horizontal lines
  \put(5,35){\line(1,0){40}}
  \put(5,25){\line(1,0){40}}
  \put(5,15){\line(1,0){40}}
  \put(5,5){\line(1,0){30}}
% vertical lines
  \put(5,5){\line(0,1){30}}
  \put(15,5){\line(0,1){30}}
  \put(25,5){\line(0,1){30}}
  \put(35,5){\line(0,1){30}}
  \put(45,15){\line(0,1){20}}

% first row
  \put(7,28){$11$}
  \put(17,28){$10$}
  \put(27,28){$~9$}
  \put(37,28){$~8$}
% second row
  \put(7,18){$~7$}
  \put(17,18){$~5$}
  \put(27,18){$~4$}
  \put(37,18){$~3$}
% third row
  \put(7,8){$~6$}
  \put(17,8){$~2$}
  \put(27,8){$~1$}

  \end{picture}
%  \qquad\quad
   \begin{picture}(45,40)
% Young Diagram
% horizontal lines
  \put(5,35){\line(1,0){40}}
  \put(5,25){\line(1,0){40}}
  \put(5,15){\line(1,0){40}}
  \put(5,5){\line(1,0){30}}
% vertical lines
  \put(5,5){\line(0,1){30}}
  \put(15,5){\line(0,1){30}}
  \put(25,5){\line(0,1){30}}
  \put(35,5){\line(0,1){30}}
  \put(45,15){\line(0,1){20}}
% diagonal lines
  \put(5,25){\line(1,1){10}}
  \put(5,15){\line(1,1){10}}

% first row
  \put(5,31){$-1$}
  \put(8.5,27){\small{$m_{11}$}}
  \put(17,28){$p_{10}$}
  \put(27,28){$~p_{9}$}
  \put(37,28){$~1$}
% second row
  \put(5,21){$-1$}
  \put(8.5,17){$m_7$}
  \put(17,18){$~1$}
  \put(27,18){$~p_4$}
  \put(37,18){$~p_3$}
% third row
  \put(7,8){$~p_6$}
  \put(17,8){$~1$}
  \put(27,8){$~1$}
  \end{picture}
\end{center}
Note that $J^{\bullet}_{\v^{\Out}}=\{7, 11\}$ and
$J^{+}_{\v^{\Out}}=\{6,9,10\}$.
Then  $g \in G_{\v,\w}$ has the form
$$g = g^{\In} g^{\Out} = (\dot s_4 \dot s_5 y_2(p_3) y_3(p_4) \dot s_4 )\ (y_6(p_6) x_5(m_7) \dot s_5^{-1} \dot s_1
y_2(p_9) y_3(p_{10}) x_4(m_{11}) s_4^{-1}).$$
When we project the resulting $7 \times 7$ matrix to its first three columns, we get
the matrix
\[
A=
\begin{pmatrix}
-p_9 p_{10} & -p_3 p_{10}& -p_{10} & -m_{11} & 0 & -1 & 0\\
0 & -p_3 p_4 & -p_4 & -m_7 & 1 & 0 & 0\\
0 & 0 & 0 & p_6 & 0 & 0 & 1\\
\end{pmatrix}
\]
\end{example}

\begin{theorem}\label{p:Plucker}\cite[Theorem 5.6]{KW2}
Let $\w=s_{i_1} \dots s_{i_m}$ be a reduced expression for $w \in W^k$ and $\v \prec \w$,
and let $D$ be the corresponding Go-diagram.  Choose any box $b$ of $D$,
and let $v^{\In} = v_b^{\In}$ and $w^{\In} = w_b^{\In}$, and
 $v^{\Out} = v_b^{\Out}$ and $w^{\Out} = w_b^{\Out}$.
Let $A = \pi_k(g)$ for any $g \in G_{\v,\w}$, and let
$I = w\{n,n-1,\dots,n-k+1\}$.
If $b$ contains a $+$, define $I_b = v^{\In} (w^{\In})^{-1} I \in {[n] \choose k}$.
If $b$ contains a white or black stone, define $I_b = v^{\In} s_{b} (w^{\In})^{-1} I \in {[n] \choose k}$.
If we write $g = g_1 \dots g_m$ as in Definition \ref{d:factorization}, then
\begin{enumerate}
\item If $b$ contains a $+$, then $\Delta_{I_b}(A) = (-1)^{|J_{\v^{\Out}}^{\bullet}|} \prod_{i\in J_{\v^{\Out}}^+} p_i.$
\item If $b$ contains a white stone, then $\Delta_{I_b}(A) = 0.$
\item If $b$ contains a black stone, then $\Delta_{I_b}(A) = (-1)^{|J_{\v^{\Out}}^{\bullet}|+1} m_b \prod_{i\in J_{\v^{\Out}}^+} p_i
+ \Delta_{I_b}(A_b),$ where $m_b$ is the parameter corresponding to $b$,
and $A_b$ is the matrix $A$ with $m_b = 0$.
\end{enumerate}

\end{theorem}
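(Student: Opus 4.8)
The plan is to reduce the statement to Theorem~\ref{p:maxmin} by means of the $\In/\Out$ factorization of Definition~\ref{inout}. Fix the box $b$ and pick a reading order of $\lambda_w$ that lists all boxes of $\lambda_b^{\In}$ (the boxes weakly southeast of $b$, which form an order ideal of $Q^k$ restricted to $\lambda_w$) before all boxes of $\lambda_b^{\Out}$; then $b$ is automatically the last box of $\lambda_b^{\In}$, being the unique maximal element of that ideal. Relative to this reading order $\w=\w^{\In}\w^{\Out}$, $\v=\v^{\In}\v^{\Out}$, and every $g\in G_{\v,\w}$ factors as $g=g^{\In}g^{\Out}$ with $g^{\In}\in G_{\v^{\In},\w^{\In}}$ and $g^{\Out}\in G_{\v^{\Out},\w^{\Out}}$; moreover $\lambda_b^{\Out}$ is a Young diagram and $w^{\Out}\in W^k$, so Theorem~\ref{p:maxmin} applies to $g^{\Out}$. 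Write $g^{\In}=g^{\In,-}\,g_b$, where $g_b$ is the factor at box $b$: $g_b=y_b(p_b)$, $g_b=\dot s_b$, or $g_b=x_b(m_b)\dot s_b^{-1}$ according to whether $b$ carries a $+$, a white stone, or a black stone.

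I would then interpret $\Delta_J(\pi_k(g))$, up to a fixed sign, as a matrix coefficient $\langle e_J^{\ast},\,g\cdot e_0\rangle$ of $g$ acting on $\wedge^k\K^n$, where $e_0$ is a fixed extreme weight vector. Applied to $g^{\Out}$, Theorem~\ref{p:maxmin} pins down the unique extreme index $I^{\Out}=w^{\Out}\{n,n-1,\dots,n-k+1\}$ occurring in $g^{\Out}\cdot e_0$ and evaluates its coefficient as $(-1)^{|J^{\bullet}_{\v^{\Out}}|}\prod_{i\in J^{+}_{\v^{\Out}}}p_i$. The core of the argument is that left multiplication by $g^{\In}$, which lies in $B^-\dot v^{\In}B^+\cap B^+\dot w^{\In}B^+$, moves this single coefficient (and only this one, in the $+$ and white-stone cases) to the $I_b$-coefficient of $g\cdot e_0$, contributing only a sign: the permutation parts of $g^{\In}$ carry the index $I^{\Out}$ to $v^{\In}I^{\Out}=v^{\In}(w^{\In})^{-1}I$, the factor $\dot s_b^{\pm1}$ at $b$ inserting an extra reflection $s_b$ precisely when $b$ carries a stone (which accounts for the two shapes $v^{\In}(w^{\In})^{-1}I$ and $v^{\In}s_b(w^{\In})^{-1}I$ of $I_b$), while the unipotent parts of the $y_i$'s and $x_i$'s act on this coefficient only through their identity terms. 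In the $+$ case this yields $\Delta_{I_b}(A)=(-1)^{|J^{\bullet}_{\v^{\Out}}|}\prod_{i\in J^{+}_{\v^{\Out}}}p_i$ after collecting signs. In the white-stone case, the factor $\dot s_b$ at $b$ obstructs this, and the $I_b$-coefficient of $g\cdot e_0$ vanishes identically, so $\Delta_{I_b}(A)=0$. In the black-stone case, write $g_b=x_b(m_b)\dot s_b^{-1}=\dot s_b^{-1}+m_b\,e_b\dot s_b^{-1}$ with $e_b$ the elementary nilpotent at $b$; since $e_b\dot s_b^{-1}$ has rank one, $\Delta_{I_b}(\pi_k(g))$ is affine in $m_b$, its constant term being $\Delta_{I_b}(A_b)$ (the value at $m_b=0$) and its linear term being obtained by feeding the extreme coefficient of $g^{\Out}\cdot e_0$ from Theorem~\ref{p:maxmin} through the $\dot s_b^{-1}\to e_b\dot s_b^{-1}$ replacement, which gives $(-1)^{|J^{\bullet}_{\v^{\Out}}|+1}m_b\prod_{i\in J^{+}_{\v^{\Out}}}p_i$.

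The main obstacle is this middle step: proving rigorously that the expansion for $\Delta_{I_b}(\pi_k(g^{\In}g^{\Out}))$ collapses to a single surviving term in the $+$ and white cases and to exactly the stated affine expression in the black case, together with the sign bookkeeping that makes the signs assemble into $(-1)^{|J^{\bullet}_{\v^{\Out}}|}$ and $(-1)^{|J^{\bullet}_{\v^{\Out}}|+1}$. This amounts to tracking how the permutations $v^{\In}$, $w^{\In}$ and the reflection $s_b$ reorder pivot columns, using that the subexpression is distinguished, and can be organized much like the sign analysis in the proof of Theorem~\ref{th:row}. An alternative that sidesteps the weight-space language is to induct on $|\lambda_b^{\In}|$, peeling off the factors of $g^{\In}$ one at a time --- each an element of an embedded $\mathrm{SL}_2$ --- and tracking the effect of each left multiplication on the single Pl\"ucker coordinate $\Delta_{I_b}$; this trades the representation-theoretic input for a longer but elementary case analysis.
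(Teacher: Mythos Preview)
This paper does not contain a proof of Theorem~\ref{p:Plucker}: the statement is quoted from \cite[Theorem~5.6]{KW2} and used as a black box in Section~\ref{sec:characterization}. So there is no proof here against which to compare your proposal.

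That said, your plan is the natural one and is essentially the strategy behind the result in \cite{KW2}: reorder the reading so that $\w=\w^{\In}\w^{\Out}$ with $b$ last in $\w^{\In}$, apply Theorem~\ref{p:maxmin} to $g^{\Out}\in G_{\v^{\Out},\w^{\Out}}$ (which is legitimate because $\lambda_b^{\Out}$ is an honest Young diagram so $w^{\Out}\in W^k$), and then track the effect of $g^{\In}$ on the extreme-weight coefficient. Your identification of the ``main obstacle'' is accurate: the content is in showing that only the lexicographically extreme term of $g^{\Out}\cdot e_0$ survives at index $I_b$ after applying $g^{\In}$. The cleanest way to organize this is your second alternative, peeling off one $\mathrm{SL}_2$ factor of $g^{\In}$ at a time and using that each $y_i(p)$, $\dot s_i$, or $x_i(m)\dot s_i^{-1}$ acts on a single Pl\"ucker coordinate by a two-term recursion; the distinguished condition on $\v$ is exactly what forces the unwanted term to vanish at each step. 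One small correction: in your white-stone analysis, be careful that $v^{\In}$ already \emph{includes} the $s_b$ factor (since $b\in J^{\circ}_{\v}$), so $I_b=v^{\In}s_b(w^{\In})^{-1}I$ has the $s_b$ cancelling the last letter of $v^{\In}$ rather than being inserted by it; the vanishing then comes from the fact that $\dot s_b$ kills the relevant weight vector in $\wedge^k\K^n$.
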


\begin{example}\label{ex:out2}
We continue Example \ref{ex:out}.
By Theorem \ref{p:maxmin}, $I = w \{5,6,7\} = \{1,2,4\}$ and
$I' = v \{5,6,7\} = \{5,6,7\}$, and
the lexicographically minimal and maximal
nonzero Pl\"ucker coordinates for $A$ are
$\Delta_I(A) = p_3 p_4 p_6 p_9 p_{10}$ and
$\Delta_{I'}(A) = 1$; this can be verified
for the matrix $A$ above.

We now verify Theorem \ref{p:Plucker}
for the box $b$ labeled $5$ in the reading order.  Then
$I_b = v^{\In} (w^{\In})^{-1}I = \{1,4,6\}$.
Theorem \ref{p:Plucker} says that
$\Delta_{I_b}(A) = 0$, since this box contains a white stone.
The analogous computations for the boxes labeled
$7$, $6$, $4$, $3$, $2$, $1$, respectively,
yield $\Delta_{1,5,7} = -p_9 p_{10}$,
$\Delta_{1,2,7} = p_3 p_4  p_9 p_{10}$,
$\Delta_{1,4,5} = p_6 p_9 p_{10}$,
$\Delta_{1,3,4} = p_4 p_6 p_9 p_{10}$,
$\Delta_{1,2,4} =p_3 p_4 p_6 p_9 p_{10}$, and
$\Delta_{1,2,4} =p_3 p_4 p_6 p_9 p_{10}$.
These can be checked for the matrix $A$ above.
\end{example}

\begin{proposition}\cite[Corollary 5.11]{KW2} For any box $b$, the rescaled Pl\"ucker coordinate
$$\frac{\Delta_{I_b}(A)}{\prod_{i\in J_{\v}^+} p_i}$$ depends only on
the parameters $p_{b'}$ and $m_{b'}$ which correspond to boxes
$b'$ weakly southeast of $b$ in the Go-diagram.
\end{proposition}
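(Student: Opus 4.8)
The plan is to extract the statement directly from the formulas of Theorem~\ref{p:Plucker}. Fix a box $b$ of the Go-diagram $D$ attached to $(\v,\w)$. Since (by Remark~\ref{rem:independence}) neither $D$ nor the Pl\"ucker coordinates of a point $A\in\mathcal{R}_D$ depend on the reading order, I am free to choose one; following Definition~\ref{inout} I would pick a reading order placing every box of $\lambda^{\Out}_b$ after every box of $\lambda^{\In}_b$, so that $\w=\w^{\In}\w^{\Out}$, $\v=\v^{\In}\v^{\Out}$, and $g=g^{\In}g^{\Out}$ for $g\in G_{\v,\w}$, with $g^{\In}$ built only from the parameters $p_{b'},m_{b'}$ of boxes $b'\in\lambda^{\In}_b$ (equivalently, weakly southeast of $b$) and $g^{\Out}$ only from those of $\lambda^{\Out}_b$. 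With this reading order $J^{+}_{\v}=J^{+}_{\v^{\In}}\sqcup J^{+}_{\v^{\Out}}$, so $\prod_{i\in J^{+}_{\v}}p_i=P^{\In}P^{\Out}$, where $P^{\In}=\prod_{i\in J^{+}_{\v^{\In}}}p_i$ involves only $\lambda^{\In}_b$-parameters and $P^{\Out}=\prod_{i\in J^{+}_{\v^{\Out}}}p_i$ only $\lambda^{\Out}_b$-parameters.

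Now I would run through the three cases of Theorem~\ref{p:Plucker}. If $b$ holds a $+$, then $\Delta_{I_b}(A)=(-1)^{|J^{\bullet}_{\v^{\Out}}|}P^{\Out}$, hence the rescaled coordinate equals $(-1)^{|J^{\bullet}_{\v^{\Out}}|}/P^{\In}$; the sign is a fixed combinatorial invariant of $D$, so this depends only on the $\lambda^{\In}_b$-parameters. If $b$ holds a white stone, then $\Delta_{I_b}(A)=0$ and the rescaled coordinate is identically $0$. The remaining case, $b$ a black stone, is the substantive one: Theorem~\ref{p:Plucker}(3) gives
\[
\frac{\Delta_{I_b}(A)}{\prod_{i\in J^{+}_{\v}}p_i}=\frac{(-1)^{|J^{\bullet}_{\v^{\Out}}|+1}m_b}{P^{\In}}+\frac{\Delta_{I_b}(A_b)}{P^{\In}P^{\Out}},
\]
where $A_b$ is the MR-matrix $A$ with $m_b=0$. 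The parameter $m_b$ belongs to $b$ itself, which is weakly southeast of $b$, so the first term is harmless, and the whole proposition is reduced to the claim that $\Delta_{I_b}(A_b)/(P^{\In}P^{\Out})$ depends only on the $\lambda^{\In}_b$-parameters; equivalently, that $\Delta_{I_b}(A_b)$ is divisible by $P^{\Out}$ with quotient a function of the $\lambda^{\In}_b$-parameters alone.

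To prove that reduced claim I would argue by induction on the number of boxes of $D$, the base case ($\lambda^{\Out}_b=\emptyset$) being trivial since then every parameter is a $\lambda^{\In}_b$-parameter. For the inductive step I would exploit the factorization $A_b=\pi_k(g^{\In}g^{\Out})$ with $m_b=0$: choosing additionally the reading order so that $b$ is the last box of $\lambda^{\In}_b$ read (possible, since $b$ is the maximum of the subposet of $Q^k$ supported on $\lambda^{\In}_b$), the factor of $g^{\In}$ at $b$ degenerates to the parameter-free monomial matrix $\dot s_b^{-1}$, and a Cauchy--Binet expansion of $\Delta_{I_b}(A_b)$ along the columns of $g^{\Out}$ writes it as a sum over $k$-subsets $K$ of (a minor of $g^{\In}$, depending only on $\lambda^{\In}_b$-parameters) times $\Delta_K(\pi_k(g^{\Out}))$. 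Each $\Delta_K(\pi_k(g^{\Out}))$ is a Pl\"ucker coordinate of a point in the strictly smaller Deodhar component of $(\v^{\Out},\w^{\Out})$, for which Theorems~\ref{p:maxmin} and~\ref{p:Plucker} apply and for which the proposition holds by the inductive hypothesis. The main obstacle, and the part I expect to require the most care, is showing that this sum collapses — using the support of $\dot s_b^{-1}$ and the triangular shape of the remaining factors of $g^{\In}$ — to a single surviving term proportional to $\Delta_{w^{\Out}\{n,\dots,n-k+1\}}(\pi_k(g^{\Out}))=\pm P^{\Out}$, so that the $\lambda^{\Out}_b$-dependence is exactly the factor $P^{\Out}$ and cancels against $P^{\Out}$ in the denominator. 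Making this collapse precise is essentially the same column-by-column bookkeeping that underlies the proof of Theorem~\ref{p:Plucker} itself.
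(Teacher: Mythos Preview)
The paper does not give its own proof of this proposition; it is imported verbatim as Corollary~5.11 of \cite{KW2}, with no argument supplied here. So there is nothing in the present paper to compare your attempt against, and I can only assess it on its own terms.

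Your handling of the $+$ and white-stone cases is correct and follows immediately from Theorem~\ref{p:Plucker}, and your reduction in the black-stone case to controlling $\Delta_{I_b}(A_b)/(P^{\In}P^{\Out})$ is sound. The genuine gap is exactly the one you flag yourself: the Cauchy--Binet ``collapse'' is asserted but not carried out, and it is the entire content of the black-stone case. There is also a mismatch between what your inductive hypothesis supplies and what you need from it. The proposition, applied to the smaller pair $(\v^{\Out},\w^{\Out})$, would control only the special Pl\"ucker coordinates $\Delta_{I_{b'}}$ attached to boxes $b'$ of $\lambda^{\Out}_b$, not an arbitrary $\Delta_K(\pi_k(g^{\Out}))$ arising in your expansion; so even granting the induction you must still argue that every surviving $K$ is of this special form, or else that the sum really does reduce to the single minimal term. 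You should also verify that $\v^{\Out}$ is distinguished in $\w^{\Out}$ before invoking Theorems~\ref{p:maxmin} and~\ref{p:Plucker} for the smaller diagram: distinguishedness of a suffix does not follow formally from distinguishedness of $\v$ in $\w$, since right-descent conditions on $(v^{\Out})_{(j-1)}$ are not the same as those on $v^{\In}(v^{\Out})_{(j-1)}$. In short, your outline is reasonable, but the substantive step remains open.
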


\subsection{The characterization of Deodhar components
in terms of minors}
Given a Go-diagram $D$  of shape $\lambda$, contained in
a $k \times (n-k)$ rectangle,
let $I = I(\lambda).$
It is not hard to check that if $D$ corresponds to the
distinguished subexpression $\v$ of the reduced expression $\w$,
then $I = w\{n,n-1,\dots,n-k+1\}$.

\begin{theorem}\label{th:characterize}
Let $D$ be a Go-diagram of shape $\lambda$ contained in a $k \times (n-k)$ rectangle.
Let $A \in Gr_{k,n}$.  Then $A$ lies in the Deodhar component
$\mathcal{R}_D$ if and only if the following conditions are satisfied:
\begin{enumerate}
\item $\Delta_{I_b}(A) = 0$ for all boxes in $D$ containing a white stone.
\item $\Delta_{I_b}(A) \neq 0$ for all boxes in $D$ containing a $+$.
\item $\Delta_{I(\lambda)}(A) \neq 0$.
\item $\Delta_{J}(A) = 0$ for all $k$-subsets $J$ which are lexicographically
  smaller than $I(\lambda)$.
\end{enumerate}
\end{theorem}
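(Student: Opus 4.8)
\emph{Necessity.} Suppose $A\in\mathcal{R}_D$. By Corollary~\ref{cor:2params} we have $\mathcal{R}_D=\mathcal{P}_{\v,\w}$, where $\v\prec\w$ is the distinguished subexpression encoded by $D$ (in the standard reading order) and $\lambda_w=\lambda$. Then conditions (1) and (2) are precisely parts (2) and (1) of Theorem~\ref{p:Plucker}: for a box $b$ with a white stone $\Delta_{I_b}(A)=0$, and for a box $b$ with a $+$ we get $\Delta_{I_b}(A)=(-1)^{|J^{\bullet}_{\v^{\Out}}|}\prod_{i\in J^{+}_{\v^{\Out}}}p_i$, which is nonzero since each $p_i\in\K^*$. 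For (3) and (4), recall that $I=w\{n,n-1,\dots,n-k+1\}=I(\lambda)$ (as noted just before the theorem); Theorem~\ref{p:maxmin} says that $\Delta_{I(\lambda)}$ is the lexicographically minimal nonzero Pl\"ucker coordinate of $A$ and equals $(-1)^{|J^{\bullet}_{\v}|}\prod_{i\in J^{+}_{\v}}p_i$, which gives both $\Delta_{I(\lambda)}(A)\neq0$ and $\Delta_J(A)=0$ for all $J$ lexicographically smaller than $I(\lambda)$.

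\emph{Sufficiency.} Suppose $A$ satisfies (1)--(4). By Corollary~\ref{cor:2params} the point $A$ lies in a unique component $\mathcal{R}_{D'}$; let $\lambda'$ be its shape. Applying the necessity direction to $D'$ shows that $\Delta_{I(\lambda')}$ is the lexicographically minimal nonzero Pl\"ucker coordinate of $A$; but conditions (3) and (4) say the same about $\Delta_{I(\lambda)}$, so $I(\lambda)=I(\lambda')$ and hence $\lambda=\lambda'$. Thus $D$ and $D'$ are Go-diagrams of the \emph{same} shape $\lambda$, corresponding to distinguished subexpressions $\v,\v'$ of the \emph{same} reduced word $\w$ for $w\in W^k$, and it remains only to prove $D=D'$.

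We show $D$ and $D'$ agree on the boxes $b_1,\dots,b_\ell$ of $\lambda$ by induction along the standard reading order. Assume $D$ and $D'$ agree on $b_1,\dots,b_{i-1}$, so that the partial products satisfy $v_{(i-1)}=v'_{(i-1)}$. If right multiplication by the generator $s_{b_i}$ labelling $b_i$ decreases the length of $v_{(i-1)}$, then distinguishedness forces position $i$ into $J^{\bullet}$ for both $\v$ and $\v'$, so $b_i$ is a black stone in both diagrams. Otherwise $b_i$ is a $+$ or a white stone in each diagram; suppose for contradiction that it is a $+$ in $D$ and a white stone in $D'$ (the reverse case is symmetric, since the necessity direction applies to $D$ as well). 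In the standard reading order every box of $\lambda^{\In}_{b_i}$ other than $b_i$ lies weakly southeast of $b_i$ and is therefore read earlier, hence has index $<i$; so $D$ and $D'$ agree on $\lambda^{\In}_{b_i}\setminus\{b_i\}$. Consequently $w^{\In}_{b_i}=(w')^{\In}_{b_i}$ (this permutation depends only on the shape $\lambda^{\In}_{b_i}$), the restriction of the Go-diagram to $\lambda^{\In}_{b_i}\setminus\{b_i\}$ determines a single permutation $V$ common to $D$ and $D'$, and, since $b_i$ is the maximal box of $\lambda^{\In}_{b_i}$ and hence is read last, we have $v^{\In}_{b_i}=V$ for $D$ and $(v')^{\In}_{b_i}=Vs_{b_i}$ for $D'$. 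Substituting into the formulas of Theorem~\ref{p:Plucker}, using that $I=w\{n,n-1,\dots,n-k+1\}$ is the same for both and that $s_{b_i}^2=e$, we obtain
$$I_{b_i}=V\,(w^{\In}_{b_i})^{-1}I\qquad\text{and}\qquad I'_{b_i}=Vs_{b_i}\,s_{b_i}\,(w^{\In}_{b_i})^{-1}I=I_{b_i}.$$
But condition (2) for $D$ gives $\Delta_{I_{b_i}}(A)\neq0$, whereas condition (1) for $D'$ gives $\Delta_{I'_{b_i}}(A)=0$, a contradiction. Hence $D$ and $D'$ agree on $b_i$, the induction closes, and $D=D'$.

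The necessity direction and the shape‑matching step follow directly from the cited results. The one place demanding care is the identity $I_{b_i}=I'_{b_i}$: one must know that $w^{\In}_{b_i}$ and the truncated partial product $V$ depend only on the shape and filling \emph{southeast} of $b_i$ and not on the box‑dependent reading order used to form the factorization $\w=\w^{\In}\w^{\Out}$ in Definition~\ref{inout}. This is exactly where one invokes the reading‑order independence of Go‑diagrams and of the Pl\"ucker coordinates $\Delta_{I_b}(A)$ (Remarks~\ref{rem:dependence} and~\ref{rem:independence}); the remaining ingredients — the inclusion $\lambda^{\In}_{b_i}\subseteq\{b_1,\dots,b_i\}$ and the case analysis on the filling of $b_i$ — are routine.
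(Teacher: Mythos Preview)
Your proof is correct and follows essentially the same approach as the paper's: both directions invoke Theorems~\ref{p:maxmin} and~\ref{p:Plucker}, and for sufficiency you locate the first box (in a reading order) where $D$ and $D'$ differ, argue via the distinguished property that one carries a $+$ and the other a white stone, and obtain the contradiction $I_{b}=I'_{b}$ from $v'^{\In}s_b=v^{\In}$. Your write-up is somewhat more explicit than the paper's (framing the comparison as an induction, separating out the forced black-stone case, and flagging the reading-order independence needed to compute $I_b$), but the argument is the same.
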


\begin{proof}
Suppose that $A$ lies in the Deodhar component $\mathcal{R}_D$.
Then by Theorem \ref{p:maxmin}, conditions (3) and (4) hold.
And by Theorem \ref{p:Plucker}, conditions (1) and (2) hold.

Now suppose that $A \in Gr_{k,n}$, and conditions (1)-(4) hold.
We want to show that $A \in \mathcal{R}_D$.  Since the Deodhar components
partition $Gr_{k,n}$, it suffices to show that $A$ cannot
lie in $\mathcal{R}_{D'}$ for any other Go-diagram $D'$.
For the sake of contradiction, assume that $A \in \mathcal{R}_{D'}$.
Then by conditions (3) and (4) above, and Theorem
\ref{p:maxmin}, it follows that $D$ and $D'$ must be Go-diagrams
of the same shape.  Therefore $D$ and $D'$ correspond to
distinguished subexpressions $\v$ and $\v'$ of the same
reduced expression $\w$.

Choose a reading order
for the boxes of the Go-diagrams $D$ and $D'$, and let $b$ be the first
box in that order where $D$ and $D'$ differ.
Then without loss of generality, in $D$ the box $b$ must
contain a $+$, and in $D'$ the box $b$ must contain a stone.
(Because $\v$
and $\v'$ are distinguished subexpressions of the
same reduced word $\w$, which agree in the first $\ell$ factors,
and differ in the $(\ell+1)$st factor -- therefore one of $\v$
and $\v'$ must use the $(\ell+1)$st simple generator $s_b$,
and one must omit it.)
In fact it follows from the definition of distinguished
subexpression, and the fact that $D$ corresponds to a distinguished
subexpression,
that the box $b$ in $D'$ must contain a white stone,
not a black one.    (When building a distinguished subexpression
from left to right, if choosing the next simple generator $s_b$
would decrease the length of the word so far, then one must choose
$s_b$.)

Now note that the minor which Theorem \ref{p:Plucker}
associates to box $b$ in $D$ is
$\Delta_{I_b}$, where
$I_b = v^{\In} (w^{\In})^{-1}(I),$ and the minor
which the theorem associates to box $b$ in $D'$
is $\Delta_{I'_b}$, where
$I'_b = {{v'}^{\In} s_b ({w}^{\In})^{-1}(I)}$.
But now note that ${v'}^{\In} s_b = v^{\In}$.
Therefore $I_b = I'_b$.  So then by Theorem
\ref{p:Plucker},
if $A \in \mathcal{R}_{D'}$, then
$\Delta_{I_b}(A) = 0$, while by condition (2),
$\Delta_{I_b}(A) \neq 0$.
This is a contradiction.
\end{proof}

\begin{corollary}\label{cor:refine}
The Deodhar decomposition of the Grassmannian is a coarsening of the
matroid stratification:  in other words, each Deodhar component
is a union of matroid strata.
\end{corollary}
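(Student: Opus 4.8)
The plan is to deduce this immediately from the characterization of Deodhar components given in Theorem~\ref{th:characterize}, together with the fact (Corollary~\ref{cor:2params}) that the components $\mathcal{R}_D$ partition $Gr_{k,n}$. Recall that two points $A, A' \in Gr_{k,n}$ lie in the same matroid stratum precisely when they have the same matroid, i.e.\ when $\Delta_J(A)=0$ if and only if $\Delta_J(A')=0$ for every $J \in \binom{[n]}{k}$. So to prove that each Deodhar component is a union of matroid strata, it suffices to show that whether a point $A$ lies in $\mathcal{R}_D$ depends only on the matroid $\M_A$, i.e.\ only on the support $\{J : \Delta_J(A) \neq 0\}$.

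First I would observe that each of the conditions (1)--(4) in Theorem~\ref{th:characterize} is literally an assertion of the form ``$\Delta_{I_b}(A) = 0$'', ``$\Delta_{I_b}(A) \neq 0$'', ``$\Delta_{I(\lambda)}(A)\neq 0$'', or ``$\Delta_J(A)=0$'', where the index sets involved---the $I_b$ attached to the white-stone and $+$ boxes, the set $I(\lambda)$, and the $k$-subsets $J$ lexicographically smaller than $I(\lambda)$---are determined entirely by the Go-diagram $D$ and not by the point $A$. Indeed, by Definition~\ref{inout} and the statement of Theorem~\ref{p:Plucker}, $I_b = v^{\In}(w^{\In})^{-1}I$ (or $v^{\In} s_b (w^{\In})^{-1}I$ when $b$ carries a stone), which is computed purely from the combinatorics of $D$. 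Hence the conjunction of (1)--(4) is a Boolean condition on the support of $A$, that is, on $\M_A$.

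It follows that if $A \in \mathcal{R}_D$ and $A'$ satisfies $\M_{A'} = \M_A$, then $A'$ satisfies exactly the same conditions (1)--(4), so $A' \in \mathcal{R}_D$; thus each $\mathcal{R}_D$ is a union of matroid strata. Since by Corollary~\ref{cor:2params} the sets $\mathcal{R}_D$ partition $Gr_{k,n}$, every matroid stratum is contained in a unique $\mathcal{R}_D$, and therefore the matroid stratification refines the Deodhar decomposition. Equivalently, the Deodhar decomposition is a coarsening of the matroid stratification, as claimed.

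The mathematical content here is entirely in Theorem~\ref{th:characterize}; the corollary is a formal consequence, and I do not anticipate any genuine obstacle. The only point requiring (brief) care is the remark above that all index sets appearing in conditions (1)--(4) are functions of $D$ alone, which is clear from Definition~\ref{inout} and Theorem~\ref{p:Plucker}.
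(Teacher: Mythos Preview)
Your proof is correct and follows exactly the same approach as the paper: membership in $\mathcal{R}_D$ is, by Theorem~\ref{th:characterize}, determined entirely by which Pl\"ucker coordinates vanish, hence depends only on the matroid of $A$. The paper's argument is the same two-line observation; you have simply spelled out the (clear) point that the index sets in conditions (1)--(4) depend only on $D$.
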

\begin{proof}
Each matroid stratum is defined by specifying that certain
Pl\"ucker coordinates are nonzero while the rest are zero.
Therefore the corollary is an immediate consequence of Theorem~\ref{th:characterize}.
\end{proof}

There is also an oriented version of Corollary \ref{cor:refine}.
To state this, we need a little preparation.
First we define the \emph{oriented matroid stratification} of
the real Grassmannian to be the decomposition into strata based
on which Pl\"ucker coordinates are $0$, positive, and negative.
Next note that from Definition
\ref{d:factorization} and Proposition
\ref{p:parameterization}, it is immediate that if
we are working over $\K = \R$, then the Deodhar
component coming from a Go-diagram $D$ has $2^r$ connected
components, where $r$ is the number of boxes in $D$ which contain
a $+$. We have the following result.

\begin{corollary}\label{cor:refine2}
Consider the decomposition of the real Grassmannian into
connected components of Deodhar components.  This is a coarsening
of the oriented matroid stratification: in other words,
each connected component of a Deodhar component is a union of
oriented matroid strata.
\end{corollary}

\begin{proof}
From Definition \ref{d:factorization} and Proposition
\ref{p:parameterization}, we see that the connected components
of a Deodhar component coming from $D$ are in bijection with the
$2^r$ ways of choosing a sign (either positive or negative)
for each of the $r$ parameters $p_{\ell}$
corresponding to the $+$ boxes of $D$.
By Theorems \ref{p:maxmin} and \ref{p:Plucker},
such a choice of signs determines the
sign pattern for the Pl\"ucker coordinates
$\Delta_{I(\lambda)}$ and
$\Delta_{I_b}$, where $b$ contains a $+$.
Conversely, suppose we know the signs for those Pl\"ucker coordinates.
Then we may algorithmically determine the signs of the $p_{\ell}$'s:
first we use Theorem \ref{p:maxmin} to determine the sign of the product of
all of the $p_{\ell}$'s; then we apply Theorem
\ref{p:Plucker} to each box $b$ containing a $+$, reading the boxes
in an order that proceeds from southeast to northwest.
(For example, by reading the rows from bottom to top, and the boxes
within each row from right to left.)
\end{proof}

\begin{remark}
Theorem~\ref{th:characterize} implicitly gives an algorithm for determining the Deodhar component and corresponding network of a point of the Grassmannian, given by a matrix representative or by a list of its Pl\"ucker coordinates.  The steps are as follows.
\begin{enumerate}
\item Find the lexicographically minimal nonzero Pl\"ucker coordinate $\Delta_I$.  Then the Go-diagram has shape $\lambda(I)$.  Fix a reading order for this shape.
\item We determine how to fill each box, working in the reading order, as follows.  First check whether the box $b$ is forced to contain a black stone.  If so, proceed to the next box.  If not, look at $\Delta_{I_b}$.  If this Pl\"ucker coordinate is zero, $b$ must contain a white stone, and if it is nonzero, $b$ must contain a $+$.
 Proceed to the next box.  This process will completely determine the Go-diagram.
\item Given the Go-diagram, we know what the underlying graph of the network must be.  To determine the weights on horizontal edges, work through them in the reading order again.  The Pl\"ucker coordinate $\Delta_{I_b}$ will only use edge weights $a_b$ (when $b$ contains a $+$) or $c_b$ (when $b$ contains a black stone) and weights $a_{b'}$ and $c_{b'}$ corresponding to boxes $b'$ which are earlier than $b$ in the reading order.  Thus, we may use the Lindstr\"om-Gessel-Viennot Lemma recursively to determine each weight $a_b$ or $c_b$.
\end{enumerate}
\end{remark}

%%%%%%%%%%%%%%%%%%  Bibliography  %%%%%%%%%%%%%%%%%%%

\raggedright
%\addcontentsline{toc}{section}{References}

\end{document}